\documentclass[leqno,12pt]{amsart}

\usepackage{amssymb}
\usepackage{amsmath}
\usepackage[usenames, dvipsnames]{color}
\usepackage{enumitem}
\usepackage{amsthm}
\usepackage{mathrsfs}
\usepackage{amsrefs}
\numberwithin{equation}{section}

\usepackage{marginnote}
\usepackage{todonotes}
\usepackage{hyperref}
\usepackage{esint}
\usepackage{verbatim}
\usepackage{tikz}

\usepackage{hyperref}

\usepackage{geometry} 

\usepackage{etoolbox}

\theoremstyle{plain}
\newtheorem{theorem}{Theorem}[section]
\newtheorem{lemma}[theorem]{Lemma}

\newtheorem{proposition}[theorem]{Proposition}

\newtheorem{definition}[theorem]{Definition}

\newtheorem{remark}[theorem]{Remark}

\newcommand{\R}{\mathbb{R}}
\newcommand{\N}{\mathbb{N}}

\newcommand{\bA}{\mathbf{A}}
\newcommand{\bB}{\mathbf{B}}

\newcommand{\E}{\mathcal{E}}

\newcommand{\M}{\mathcal{M}}

\newcommand{\W}{\mathcal{W}}

\newcommand{\hW}{\hat{\mathcal{W}}}

\makeatletter
\@namedef{subjclassname@2020}{\textup{2020} Mathematics Subject Classification}
\makeatother

\newcommand{\wei}[1]{\langle #1 \rangle}

\def\XXint#1#2#3{{\setbox0=\hbox{$#1{#2#3}{\int}$ }
\vcenter{\hbox{$#2#3$ }}\kern-.6\wd0}}

\title[Weighted estimates for parabolic degenerate - singular equations]{Weighted $W^{1,p}$-estimates for Parabolic Equations of Fabes-Kenig-Seraponi singular-degenerate type
}

\author[T. Phan]{Tuoc Phan}
\address[T. Phan]{Department of Mathematics, University of Tennessee, 227 Ayres Hall,
1403 Circle Drive, Knoxville, TN 37996-1320}
\email{tphan2@utk.edu}

\subjclass[2020]{35K65, 35K67, 35K20, 35A01, 35B45, 35D30}
\keywords{Parabolic equations, Singular-degenerate coefficients, Weighted Sobolev spaces, Existence, Uniqueness, Calder\'{o}n-Zygmund estimates}

\begin{document}

\begin{abstract} We investigate Dirichlet boundary value problems for a class of second-order parabolic equations in divergence-form with coefficient matrices that exhibit singular and degenerate behaviors characterized by a Muckenhoupt weight class. This framework serves as the parabolic analogue to the singular-degenerate elliptic equations pioneered by Fabes, Kenig, and Seraponi. Under a smallness assumption on the partially weighted mean oscillation of the coefficients, we establish the existence, uniqueness, and local interior and boundary regularity estimates for weak solutions within appropriately defined weighted Sobolev spaces. The proofs rely on the freezing coefficient technique alongside the level-set method introduced by Caffarelli and Peral. Additionally, we develop the necessary weighted Sobolev space framework and related weighted inequalities. Finally, a compactness argument is utilized to demonstrate that solutions to these equations remain locally close, in the weighted Sobolev norm, to their frozen-coefficient counterparts.
\end{abstract}
\maketitle
\section{Introduction} 
\subsection{Problem settings and motivations} \label{mov-sec} We study the Dirichlet boundary value problems for a class of second-order linear parabolic equations whose leading coefficients are of Fabes-Kenig-Seraponi singular-degenerate type
\begin{equation} \label{main-eqn}
\left\{
\begin{array}{cccl}
u_t - \textup{div}(\bA(x,t) \nabla u) & = & \textup{div}(\beta(x) F(x,t))  & \quad  \text{in} \quad \Omega_T, \\[4pt]
u & =  & 0 & \quad \text{on} \quad \partial' \Omega_T.
\end{array} \right.
\end{equation}
Here in \eqref{main-eqn}, $\Omega_T = \Omega \times (0, T]$ with some given $T \in (0,\infty)$ and some non-empty open bounded domain $\Omega \subset \mathbb{R}^n$ with boundary $\partial \Omega$, $F: \Omega_T \rightarrow \mathbb{R}^n$ is a given forcing term, $u: \Omega_T \rightarrow \mathbb{R}$ is an unknown solution, and $\partial' \Omega_T$ is the parabolic boundary of $\Omega_T$:
\begin{equation*} 
 \partial' \Omega_T = (\overline{\Omega} \times \{0\}) \cup (\partial \Omega \times (0, T]).
\end{equation*}
Moreover, $\bA: \Omega_T \rightarrow \mathbb{R}^{n\times n}$ is a symmetric measurable matrix-valued function satisfying the Fabes-Kenig-Seraponi  singular-degenerate type conditions: there is a constant $\nu \in (0,1)$ such that
\begin{equation} \label{ellip-cond}
\nu \beta(x) |\xi|^2 \leq \wei{\bA(x,t) \xi, \xi} \quad \text{and} \quad |\bA(x,t)| \leq \nu^{-1} \beta(x), 
\end{equation}
for all $(x,t) \in \Omega_T$ and for all $\xi=(\xi_1, \xi_2,\ldots, \xi_n) \in \mathbb{R}^n$. We assume that $\beta: \mathbb{R}^n \rightarrow [0, \infty)$ is a non-negative locally integrable function satisfying $\beta \in A_{1+\frac{1}{n_0}}$-Muckenhoupt class of weights with $n_0 = \max\{n-1, 1\}$.  As such, the eigenvalues of the coefficient matrix $\bA$ in \eqref{main-eqn} can vanish (be degenerate) or blow up (be singular) in $\overline{\Omega}$.  When $\beta =1$, the equation \eqref{main-eqn} is reduced to the standard second order parabolic equations in divergence-form. 

\smallskip
Parabolic equations featuring singular-degenerate coefficients arise naturally in a wide array of physical and biological systems where diffusion and transport processes depend heavily on heterogeneous media or varying state densities. For instance, in mathematical biology, the pioneering continuum framework established in \cite{EPL} describes the spatio-temporal expansion of bacterial biofilms using a nonlinear, density-dependent diffusion mechanism. In this model, the governing equation exhibits a porous medium-type degeneracy when the biomass density vanishes, and a singular super-diffusion effect as it approaches its maximum physical threshold. These phenomena are closely tied to the broader theory of the porous medium equation, which has been extensively developed in works such as \cite{Vazquez, JX, JX1}. Similar mathematical structures emerge in heat transfer problems involving extreme phase transitions, composite materials, or low-density gas vacuums where uniform parabolicity breaks down; see, for example, \cite{AS, DIT, Li-Xi, PSW, Visintin}. Beyond classical physics and biology, operators with singular-degenerate coefficients are also prominently featured in geometric analysis and mathematical finance (see, e.g., \cite{Le, Pop-2}).

\smallskip
From an analytical perspective, incorporating a Muckenhoupt weight class, originally introduced to characterize the boundedness of the operators, provides a robust and mathematically rigorous framework capable of capturing these spatial singularities and vanishing behaviors simultaneously. This weighted approach has proven crucial in establishing priori estimates, weighted Sobolev embeddings, and regularity theory for both linear, nonlinear elliptic and parabolic problems, and in nonlocal fractional operators as demonstrated in \cite{DP, DP1, DP3, DPT1, DPT2, DPS, Fabes, Fabes-1, MP, JS, JX, JX1, STV1, STV2, STT}. Consequently, the study of weak solutions in these weighted spaces bridges the gap between modern harmonic analysis and non-uniform physical phenomena.

\smallskip
This paper establishes sufficient conditions on $\mathbf{A}, \beta$, and $F$ to develop a rigorous framework for weighted Sobolev spaces, through which we prove the existence, uniqueness, interior regularity, and local boundary estimates for weak solutions to \eqref{main-eqn}. Analogous results for the elliptic counterpart were studied in \cite{CMP, CMP-1}. Furthermore, qualitative H\"{o}lder regularity theory was previously investigated in the seminal elliptic works \cite{Fabes, Fabes-1} and extended to parabolic equations in \cite{Chia-1, Chia-2, Chia-3}; related classes of elliptic equations featuring singular-degenerate coefficients under alternative integrability conditions were also treated in \cite{BS, Tru-1, Tru-2}. The present work serves as a natural bridge between these distinct directions by establishing quantitative Sobolev regularity for the parabolic structure. Moreover, this paper represents a major continuation of the broader research program developed in \cite{CMP, CJP, DPS, DP, DP-0, DP1, DP3, DPT1, DPT2, FP-1, FP-2, MP}, which systematically investigates the fine quantitative properties and optimal regularity profiles of solutions to elliptic and parabolic equations whose coefficients deviate from classical uniform ellipticity and boundedness.
\subsection{Main results} Let us introduce notations and definitions of functional spaces that are relevant to \eqref{main-eqn}. For each $p \in [1, \infty)$ and a non-empty open set $\Omega \subset \mathbb{R}^n$, $W^{1,p}(\Omega)$ denotes the usual Sobolev space, and $W^{1,p}_0(\Omega)$ is the completion of the set of compactly supported smooth functions $C_0^\infty(\Omega)$ in $W^{1,p}(\Omega)$.  Also, $W^{-1,p}(\Omega)$ is the dual space of $W^{1,p'}_0(\Omega)$, where $\frac{1}{p'}+ \frac{1}{p} =1$.  In addition, for a given non-negative and locally integrable function $\beta$ defined on $\Omega$, $L^p(\Omega, \beta)$ denotes the weighted Lebesgue space consisting of measurable functions $u: \Omega \rightarrow \mathbb{R}$ such that
\[
\|u\|_{L^p(\Omega, \beta)} = \Big(\int_{\Omega} |u(x)|^p \beta(x) dx \Big)^{1/p} <\infty.
\]
We also denote $W^{1,p}(\Omega, \beta)$ the weighted Sobolev space consisting of measurable functions $u \in L^p(\Omega, \beta)$ such that their weak derivatives $u_{x_i} \in L^p(\Omega, \beta)$ for all $i=1, 2,\ldots, n$.  The space $W^{1,p}(\Omega, \beta)$ is endowed with the norm
\[
\|u\|_{W^{1,p}(\Omega, \beta)}=  \|u\|_{L^p(\Omega, \beta)} + \|\nabla u\|_{L^p(\Omega, \beta)}. 
\]
In the same way, $W^{1,p}_0(\Omega, \beta)$ is the closure in $W^{1,p}(\Omega, \beta)$ of $C_0^\infty(\Omega)$, and $W^{-1,p}(\Omega, \beta)$ is the dual of the space $W^{1,p'}(\Omega, \beta)$ in which $\frac{1}{p} + \frac{1}{p'} =1$.

\smallskip
Next, for given $T>0$, we write $\Omega_T = \Omega \times (0, T]$. We denote $\W^{1,p}(\Omega_T, \beta)$ the weighted parabolic Sobolev space consisting of measurable functions $u: \Omega_T \rightarrow \mathbb{R}$ such that
\[
u \in L^p((0, T), W^{1,p}(\Omega, \beta)) \quad \text{and} \quad u_t \in L^p((0, T), W^{-1, p}(\Omega, \beta)).
\]
The space $\W^{1,p}(\Omega_T, \beta)$ is endowed with the norm
\begin{equation}\label{Norm-W1p}
\|u\|_{\W^{1,p}(\Omega_T, \beta)} = \|u\|_{L^p((0, T), W^{1,p}(\Omega, \beta))} + \|u_t\|_{L^p((0, T), W^{-1, p}(\Omega, \beta))}, 
\end{equation}
for $u \in \W^{1,p}(\Omega_T, \beta)$. 
In addition, $\W^{1,p}_*(\Omega_T, \beta)$ the closure in  $\W^{1,p}(\Omega_T, \beta)$ of the set
\[
\big\{ \varphi \in C^\infty([0, T], C_0^\infty(\Omega)):\ \varphi=0 \ \text{in}\ \Omega \times [0, \tau]  \ \text{for some}\ \tau \in (0,T)\big\}.
\]
The space $\W^{1,p}_*(\Omega_T, \beta)$ is endowed with the same norm as in \eqref{Norm-W1p}, that is,
\[
\|u\|_{\W^{1,p}_{*} (\Omega_T, \beta)} = \|u\|_{L^p((0, T), W^{1,p}(\Omega, \beta))} + \|u_t\|_{L^p((0, T), W^{-1, p}(\Omega, \beta))}
\]
for $u \in \W^{1,p}_*(\Omega_T, \beta)$. 

\smallskip
Now, for $p \in (1, \infty)$ and for $\beta$ defined as in \eqref{ellip-cond}, a function $u \in \W^{1,p}_{*} (\Omega_T, \beta)$ is said to be a weak solution to \eqref{main-eqn} if
\begin{align*}
& -\int_{\Omega_T} u(x,t) \partial_t \varphi(x,t) dx dt + \int_{\Omega_T} \wei{\bA(x,t) \nabla u(x,t), \nabla \varphi (x,t)} dx dt \\
& = -\int_{\Omega_T} \wei{\beta(x) F(x,t), \nabla \varphi(x,t)} dx dt, 
\end{align*}
for every $\varphi \in C^\infty([0, T], C^\infty_0(\Omega))$ satisfying $\varphi =0$ in $\Omega \times [T-\tau, T]$ for some sufficiently small $\tau \in (0,T)$.

\smallskip
We assume throughout the paper that the weight $\beta$ in \eqref{main-eqn}  satisfies the following conditions
\begin{equation} \label{beta-cond}
\beta \in A_{1+\frac{1}{n_0}} \quad \text{and} \quad [\beta]_{A_{1+\frac{1}{n_0}}} \leq M_0,  \quad \text{for} \quad n_0 = \max\{n-1, 1\},
\end{equation}
with for some given $M_0 \geq 1$; see Definition \ref{A-p-def} for more details on the definition of the $A_{q}$-Muckenhoupt class of weights for $q \in (1, \infty)$.  Then, the \textit{non-homogeneous weighted parabolic cylinder} $Q_{\rho,\beta}(z_0)$ of radius $\rho>0$ and centered at $z_0 = (x_0, t_0) \in \mathbb{R}^n \times \mathbb{R}$ is defined by
\begin{equation} \label{cylinder-introc}
\begin{split}
& Q_{\rho,\beta}(z_0) = B_\rho(x_0) \times (t_0 -\rho^2 \E_{\beta}(x_0, \rho), t_0] \quad \text{where} \\
& \E_{\beta}(x_0, \rho) = \Big(\fint_{B_\rho(x_0)} \beta(x)^{-n_0} dx \Big)^{\frac{1}{n_0}}.
\end{split}
\end{equation}
Due to $\beta \in A_{1+\frac{1}{n_0}}$, we see that $ \E_{\beta}(x_0, \rho)$ is well-defined for every $x_0 \in \mathbb{R}^n$ and for $\rho>0$. Moreover, as discussed in Sections \ref{cylinder-distance-sec} and \ref{Fn-spaces} below, the parabolic cylinders $Q_{r,\beta}(z_0)$ are generically invariant under the scaling and dilation properties for the class of equations \eqref{main-eqn}. In fact, there is a quasi-distance associated with the class of non-homogeneous parabolic cylinders $\{Q_{\rho, \beta}(z_0)\}_{\rho, z_0}$. 

\smallskip
Besides \eqref{ellip-cond}, we assume that the \textit{partial mean oscillations of the matrix $\bA$ with respect to the weight $\beta$} is sufficiently small. We would like to point out that the partial mean oscillations of functions with respect to the weights is the concept that is combined by the concept on mean oscillations with respect to the weights  introduced in \cite{M-W}, and the concept on partial mean oscillations introduced in \cite{KK1, KK2}. Let us denote 
\[
\begin{split}
& (\bA)_{B_{\rho}(x_0) \cap \Omega} (t) = \frac{1}{|B_{\rho}(x_0) \cap \Omega|} \int_{B_{\rho}(x_0) \cap \Omega} \bA(x,t) dx \quad \text{and}  \\
& \Theta_{\bA, \rho, x_0} (x,t)= |\bA(x,t) - (\bA)_{B_{\rho}(x_0) \cap \Omega} (t)| \beta(x)^{-1}, \quad \rho>0, \ x_0 \in \overline{\Omega}.
\end{split}
\] 
We also write $\beta(dx) = \beta(x) dx$, and $\beta(dxdt) = \beta(x) dxdt$. Moreover, for every open non-empty bounded set $Q \subset \mathbb{R}^n \times \mathbb{R}$ and for every locally integrable function $f$ defined on $Q$, the weighted average of $f$ on $Q$ with respect to $\beta(dxdt)$ is defined by
\[
\fint_{Q} f(x,t) \beta(dxdt) = \frac{1}{\beta(Q)} \int_{Q} f(x,t) \beta(x) dxdt, \quad \text{where} \quad \beta(Q) = \int_Q \beta(x) dxdt.
\]
Then, for a fixed number $R_0>0$, and a non-empty parabolic domain $D \subset \mathbb{R}^n \times \mathbb{R}$, the \textit{partial mean oscillations of the matrix $\bA$ with respect to the weight $\beta$ in $D$} is defined by
\begin{equation}\label{BMO-def}
[[\bA]]_{\textup{BMO}}(D, R_0) = \sup_{\rho \in (0, R_0)}\ \sup_{\substack{z_0 =(x_0, t_0) \in D}}\Big(\fint_{Q_{\rho, \beta}(z_0) \cap E} \Theta_{\bA, \rho, x_0} (x,t)^2 \beta(dxdt)\Big)^{\frac{1}{2}}.
\end{equation}

\smallskip
The following theorem on existence, uniqueness and regularity estimates of weak solutions to the boundary value problem \eqref{main-eqn} is the main result of the paper.
\begin{theorem} \label{main-theorem} For every $\nu \in (0,1)$, $M_0 \in [1, \infty)$, and $p \in (1, \infty)$, there exists a sufficiently small positive constant $\delta = \delta (n, \nu, p, M_0)$ such that the following assertions hold. Suppose that  \eqref{ellip-cond} and \eqref{beta-cond} hold, $\partial \Omega \in C^1$, $T \in (0, \infty)$, and 
\begin{equation} \label{BMO-defi}
\begin{split}
[[\bA]]_{\textup{BMO}}(\Omega_T, R_0) \leq \delta,
\end{split}
\end{equation}
with some $R_0 \in (0,1)$. Then, for every $F \in L^p(\Omega_T, \beta)^n$, there is a unique weak solution $u \in \W^{1,p}_{*} (\Omega_T, \beta)$ solving \eqref{main-eqn}. Moreover, there is a positive constant $N = N(n, \nu, p, M_0, R_0, \Omega, T)$ such that
\begin{equation} \label{main-thm-est}
\|u\|_{\W^{1,p}_{*}(\Omega_T,\beta)}\leq N \|F\|_{L^p(\Omega_T, \beta)}.
\end{equation}
\end{theorem}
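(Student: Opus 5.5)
We follow the standard scheme for Calder\'on--Zygmund theory of divergence-form equations with small mean oscillation coefficients, adapted to the weight $\beta$ and to the non-homogeneous cylinders $Q_{\rho,\beta}(z_0)$.

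\textbf{Step 1: the case $p=2$.} We first treat $p=2$ by an energy/Galerkin argument. Testing the equation with $u$ (after Steklov averaging in $t$) and using \eqref{ellip-cond} gives
\[
\sup_{t}\|u(\cdot,t)\|_{L^2(\Omega)}^2+\nu\int_{\Omega_T}|\nabla u|^2\beta\,dxdt\le N\int_{\Omega_T}|F|^2\beta\,dxdt .
\]
Here the weighted Poincar\'e/Sobolev inequality for $\beta\in A_{1+1/n_0}$ is used to control $\|u\|_{L^2(\Omega_T,\beta)}$ by $\|\nabla u\|_{L^2(\Omega_T,\beta)}$. The bound on $u_t$ in $L^2(W^{-1,2}(\Omega,\beta))$ then follows by reading off $u_t$ from the equation, since $|\bA\nabla u|\le\nu^{-1}\beta|\nabla u|$. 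Existence follows by Galerkin approximation in the Hilbert space $W^{1,2}_0(\Omega,\beta)$, and uniqueness follows from the same energy identity.

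\textbf{Step 2: a priori estimate for $p>2$.} We prove \eqref{main-thm-est} for $u$ already in $\W^{1,2}_*$. The key is a local approximation lemma. In each $Q_{\rho,\beta}(z_0)$, interior or intersecting the $C^1$ boundary after flattening, compare $u$ with the solution $v$ of the equation whose coefficients are frozen in $x$, namely $(\bA)_{B_\rho(x_0)\cap\Omega}(t)$, which depend only on $t$. The claim is
\[
\fint |\nabla(u-v)|^2\beta\le \epsilon\fint|\nabla u|^2\beta + N\fint|F|^2\beta,
\qquad \epsilon\to0 \text{ as } \delta\to0 .
\]
Following the abstract, we would prove this by compactness: rescale to unit size via the intrinsic scaling built into $\E_\beta$, and argue by contradiction along a sequence with $\delta_k\to0$. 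The limit satisfies the frozen equation, so the normalized differences must vanish. For the frozen equation we need an $L^\infty$ (Lipschitz-type) bound on $\nabla v$ with respect to $\beta$. This we would obtain by differentiating in tangential directions, since $\beta$ depends on $x$ only through the weight and not as a coefficient singularity in those directions in the relevant way, combined with weighted Caccioppoli estimates. Near the flat boundary, the normal derivative is recovered from the equation.

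\textbf{Step 3: level sets and the remaining ranges.} Using the quasi-distance generated by $Q_{\rho,\beta}$, which makes $(\R^{n+1},\beta\,dxdt)$ a space of homogeneous type, we apply the Caffarelli--Peral level-set/Vitali covering argument. The weighted measure of $\{\mathcal M(|\nabla u|^2)>N\lambda\}$ is at most $\epsilon$ times the weighted measure of $\{\mathcal M(|\nabla u|^2)>\lambda\}\cup\{\mathcal M(|F|^2)>\gamma\lambda\}$. Summing in $\lambda$ with the weighted maximal function theorem yields $\|\nabla u\|_{L^p(\beta)}\le N\|F\|_{L^p(\beta)}$ for $p>2$, with a lower-order term absorbed by Step 1, by a covering with $R_0$-size cylinders, and by Gronwall-type arguments in $T$. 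The $u_t$ norm again follows from the equation. For $p\in(1,2)$ we use duality: the adjoint problem (backward in time, with $\bA^{T}=\bA$) satisfies the same hypotheses, so the $p'$ estimate transfers. For $p>2$, existence follows by approximating $F$ by $F_k\in L^2\cap L^p$ and passing to the limit using the uniform bounds; for $p<2$, existence follows by duality together with density of $\W^{1,2}_*\cap\W^{1,p}_*$. Uniqueness for $p<2$ follows from the duality estimate applied to the adjoint problem.

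\textbf{Main obstacle.} The hardest step is the approximation and regularity in Step 2, in particular the gradient bound for the frozen equation near the boundary. The weight still appears in the frozen coefficients $(\bA)_{B_\rho}(t)$, which are bounded by averages of $\beta$, while the ellipticity \eqref{ellip-cond} is with respect to the pointwise $\beta(x)$. Handling this mismatch is exactly where the $A_{1+1/n_0}$ condition and the $\beta^{-1}$ normalization in $\Theta_{\bA,\rho,x_0}$ must be used. We would first try the compactness argument with weighted Sobolev embedding in the intrinsic cylinders, so as to avoid needing explicit Lipschitz bounds beyond reverse H\"older (Gehring) higher integrability. That self-improvement would suffice for the level-set argument.
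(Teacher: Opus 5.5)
The architecture is the paper's. For $p=2$ you use the energy estimate, the Fabes--Kenig--Serapioni Poincar\'e inequality and Galerkin. For $p>2$ you compare locally with solutions of the $x$-independent frozen equation and run Caffarelli--Peral in the cylinders $Q_{\rho,\beta}$. For $p\in(1,2)$ you use duality. Section~6 of the paper is exactly your Steps~1 and~3 placed on top of Proposition~\ref{main-thm-2}.

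The gap is in Step~2, which carries all the substance, and it begins with a misdiagnosis. The mismatch you single out is harmless. Averaging \eqref{ellip-cond} over $B_4$ makes $(\bA)_{B_4}(t)$ elliptic with constants $\nu(\beta)_{B_4}$ and $\nu^{-1}(\beta)_{B_4}$. After the time dilation \eqref{scale-2} normalizing $\E_\beta(4)=1$, one has $1\le(\beta)_{B_4}\le M_0$. So the frozen operator is uniformly parabolic and contains no weight at all, and ``tangential derivatives'' or ``weighted Caccioppoli estimates'' are beside the point for it.

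The real mismatch is that $u_t$ is unweighted while the diffusion is weighted. This puts the unweighted term $\int\hat u^2|\partial_t\chi|\,dxdt$ into the Caccioppoli inequalities (Lemmas~\ref{Caccio-1} and~\ref{compare-lemma-1}). Controlling that term requires the embedding $W^{1,2}(B,\beta)\hookrightarrow L^{2+\kappa}(B,dx)$ and the interpolation inequality of Lemmas~\ref{imbed-wei-0309}--\ref{imbed-epsilon-0309}. That is where $\beta\in A_{1+1/n_0}$, rather than $A_2$, is needed. The same embedding is what makes $L^2(\Omega)$ a legitimate pivot space for the energy identity and Galerkin scheme in your Step~1.

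Second, ``the limit satisfies the frozen equation, so the normalized differences must vanish'' skips what actually has to be proved.

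\begin{itemize}
\item Since the weights $\beta_k$ change along the sequence, there is no fixed weighted space in which to extract limits. You must pass to unweighted $\W^{1,p_0}$ with $p_0<2$ via Lemma~\ref{L-q-2-wei}.
\item Even a uniform bound there needs a Poincar\'e inequality for solutions (Lemma~\ref{L-2-inter-u-est}). A spatial Poincar\'e inequality does not control oscillation in $t$.
\item The limit $u_0$ is then only a $\W^{1,p_0}$ solution, below the energy class. Its coefficients $\bar\bA(t)$ are merely a weak-$*$ limit of $(\bA_k)_{B_4}(t)$.
\item You therefore need a regularity upgrade for such solutions. The paper uses a Brezis-type duality argument (Lemma~\ref{Lipschitz-constant}); spatial mollification would also work, since the coefficients do not depend on $x$.
\item You need correctors $g_k$, so that the comparison functions solve the equation with the actual frozen coefficients $(\bA_k)_{B_4}(t)$.
\item You need to interpolate between strong $L^{p_0}(dxdt)$ convergence and a uniform $L^{2\kappa}(\beta_k)$ bound, which itself uses $\|\nabla u_0\|_{L^\infty}$. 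This gives smallness of $w-[w]_{0,5/2}$ in $L^2(\beta)$.
\item Only then does the Caccioppoli inequality for $w=u-v$ give smallness of $\nabla(u-v)$ in $L^2(\beta)$ (Lemma~\ref{L2-comparision} and Proposition~\ref{L2-gradient-comparision}).
\end{itemize}

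Your fallback of replacing Lipschitz bounds by Gehring-type self-improvement does not work. A comparison function with only $\nabla v\in L^{2+\epsilon_0}$ gives the level-set decay only for $p<2+\epsilon_0$. Covering all $p\in(2,\infty)$ requires $\|\nabla v\|_{L^\infty}\le N$, which is available here precisely because the frozen coefficients depend only on $t$.

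Finally, ``Gronwall-type arguments in $T$'' do not handle cylinders near $t=0$. The paper instead extends $u$ and $F$ by zero to $t\in(-1,0)$. Then Proposition~\ref{main-thm-2}, which only controls $\Omega\times(\tau_0,T)$, covers all of $\Omega_T$.
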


\smallskip
We remark that condition \eqref{BMO-defi} is necessary for \eqref{main-thm-est} unless $p=2$, as demonstrated in \cite[Section 2.2.1]{CMP} for the elliptic case. We also note that the condition $\partial \Omega \in C^1$ can be relaxed to Lipschitz domains with small Lipschitz constants, as in \cite{HNP}, or to Reifenberg flat domains, as in \cite{CMP, BW1}. Additionally, the main results could be extended to the matrix weight setting as in \cite{BDGP}, to systems of equations as in \cite{CMP-1}, or to the weighted mixed-norm space framework as in \cite{DK, DP1, DP3}. We do not pursue these directions here to avoid 
additional technical complexities. We also note that it is not clear if the condition $\beta \in A_{1+\frac{1}{n_0}}$ as in \eqref{beta-cond} is optimal.

\smallskip
Besides Theorem \ref{main-theorem}, we also establish local interior and local boundary regularity estimates for $\W^{1,2}$-weak solutions of \eqref{main-eqn}. For simplicity in stating the results,  let $\Lambda = \Lambda(n, M_0) \geq 1$ be the constant given in Remark \ref{quasi-metric lemma} below. To investigate the local interior regularity estimates, we consider the equation
\begin{equation} \label{Q2-eqn}
 u_t - \textup{div}(\bA (x,t) \nabla u) = \textup{div}(\beta(x) F(x,t))  \quad \text{in} \quad Q_{2\Lambda, \beta},
\end{equation}
For a given $F \in L^2_{\text{loc}}(Q_{2\Lambda}, \beta)$ and under the assumptions \eqref{ellip-cond} and \eqref{beta-cond}, a function $u \in \W^{1,2}(Q_{2\Lambda, \beta}, \beta)$ is said to be a weak solution of \eqref{Q2-eqn} if 
\begin{align*}
& -\int_{Q_{2\Lambda, \beta}} u(x,t) \partial_t\varphi(x,t) dx dt + \int_{Q_{2\Lambda, \beta}} \wei{\bA(x,t) \nabla u(x,t), \nabla \varphi(x,t)} dx dt \\
& = -\int_{Q_{2\Lambda, \beta}} \wei{F(x,t), \nabla \varphi(x,t)} \beta(x) dx dt, \quad \forall \ \varphi \in C_0^\infty(Q_{2\Lambda, \beta}).
\end{align*}
The following result on local interior regularity estimates for $\W^{1,2}(Q_{2\Lambda, \beta}, \beta)$-weak solutions to the class of equations \eqref{Q2-eqn} is another main result of the paper.
\begin{theorem} \label{inter-theorem} For every $\nu \in (0,1)$, $M_0 \in [1, \infty)$, and $p \in [2,\infty)$, there exist a sufficiently small positive constant $\delta_0 = \delta_0(n, \nu, p, M_0)$ and a positive constant $N = N(n, \nu, p, M_0)$ such that the following assertions hold. Suppose that $\bA$ satisfies \eqref{ellip-cond} in $Q_{2\Lambda,\beta}$ and its corresponding $\beta$ satisfies \eqref{beta-cond}. Suppose also that
\[
[[\bA]]_{\textup{BMO}}(Q_{1,\beta}, 1)  \leq \delta_0,
\]
and $u \in \W^{1,2}(Q_{2\Lambda, \beta}, \beta)$ is a weak solution of the equation \eqref{Q2-eqn}. If $F \in L^p(Q_{2\Lambda,\beta}, \beta)^n$, then it holds that
\begin{align*}
&\| \nabla u\|_{L^p(Q_{1,\beta}, \beta)}+\|u_t\|_{L^p(\Gamma_{\beta}(1),W^{-1,p}(B_1, \beta))}\\
&\leq N\left(\|\nabla u\|_{L^2(Q_{2\Lambda,\beta}, \beta)}+\|F\|_{L^p(Q_{2\Lambda,\beta}, \beta)}\right).
\end{align*}
\end{theorem}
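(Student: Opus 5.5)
We prove Theorem \ref{inter-theorem} with the Caffarelli--Peral level-set method, carried out on the space of homogeneous type $(\R^{n+1},\ \beta(dxdt))$ equipped with the quasi-distance generated by the cylinders $Q_{\rho,\beta}(z_0)$ (Remark \ref{quasi-metric lemma}). The measure $\beta(dxdt)$ is doubling on these cylinders because $\beta\in A_{1+\frac1{n_0}}$. Hence the Hardy--Littlewood maximal function $\M$ taken over the cylinders $Q_{\rho,\beta}$ is bounded on $L^q(\beta)$ for $q>1$. A Vitali-type covering lemma (the ``crawling ink spots'' lemma) also holds for these cylinders.

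\textbf{Step 1 (energy and reverse Hölder).} Testing the equation with $u\phi^2$, where $\phi$ is a cutoff adapted to $Q_{\rho,\beta}(z_0)$, gives a Caccioppoli inequality. The parabolic scaling $\rho^2\E_\beta(x_0,\rho)$ in the time variable is designed so that the time derivative of the cutoff, of size $\rho^{-2}\E_\beta^{-1}$, balances the weighted Sobolev--Poincaré inequality. That inequality is available for $\beta\in A_{1+1/n_0}$ through the exponent $\beta^{-n_0}$ in $\E_\beta$. The resulting weighted Caccioppoli and Poincaré estimates yield a reverse Hölder inequality for $\nabla u$. The same energy argument gives $W^{1,\infty}$-type (Lipschitz in $x$) bounds for solutions of the frozen equation $v_t-\textup{div}(\bar\bA(t)\nabla v)=0$, where $\bar\bA(t)=(\bA)_{B_\rho(x_0)}(t)$ depends only on $t$. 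Since $\bar\bA$ is independent of $x$, spatial difference quotients of $v$ again solve the frozen equation. Iterating Caccioppoli then bounds $\|\nabla v\|_{L^\infty(Q_{\rho/2,\beta})}$ by the weighted $L^2$ average of $\nabla v$ on $Q_{\rho,\beta}$.

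\textbf{Step 2 (approximation).} Suppose $\fint_{Q_{\rho,\beta}}|\nabla u|^2\beta\le1$ and $\fint|F|^2\beta\le\epsilon^2$ on a suitably enlarged cylinder. We show that the solution $v$ of the frozen equation with $v=u$ on the parabolic boundary satisfies $\fint|\nabla(u-v)|^2\beta\le C(\epsilon^2+\delta^{\theta})$. To obtain this, we test the equation for $u-v$ and estimate the term $\int|(\bA-\bar\bA)\nabla u\cdot\nabla(u-v)|$. Writing $\bA-\bar\bA=\Theta\,\beta$, we apply Hölder with the higher integrability of $\nabla u$ from the reverse Hölder step together with the smallness of $\Theta$ in weighted $L^2$. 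Because $\Theta$ is bounded by $2\nu^{-1}$, its weighted $L^s$ norm is small for every $s$. Alternatively, the compactness argument announced in the abstract can be used here.

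\textbf{Step 3 (level sets and conclusion).} From Steps 1--2 and the covering lemma we derive a density estimate. There is $N_1$ such that for all $\epsilon>0$ we can choose $\delta_0$ with
\[
\beta(\{\M(|\nabla u|^2)>N_1^2\lambda^2\})\le C\epsilon\, \beta(\{\M(|\nabla u|^2)>\lambda^2\})+\beta(\{\M(|F|^2)>\epsilon^2\lambda^2\}),
\]
applied after normalization to the iterated levels $\lambda=N_1^k$. Summing over $k$ in the usual way, with $\epsilon$ chosen small relative to $N_1^{-p}$, gives the bound on $\|\nabla u\|_{L^p(Q_{1,\beta},\beta)}$, using the boundedness of $\M$ on $L^{p/2}(\beta)$ when $p>2$. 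The case $p=2$ is just the energy estimate. The bound on $u_t$ then follows by duality from the equation. For $\varphi\in L^{p'}(W^{1,p'}_0(B_1,\beta))$ we have
\[
\langle u_t,\varphi\rangle=-\int\wei{\bA\nabla u+\beta F,\nabla\varphi}.
\]
Since $|\bA|\le\nu^{-1}\beta$, this is controlled by $\|\nabla u\|_{L^p(\beta)}+\|F\|_{L^p(\beta)}$ through Hölder's inequality in $L^p(\beta)$.

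The main obstacle is Step 2 together with the Lipschitz bound in Step 1. Both require parabolic weighted Sobolev/Poincaré inequalities that are consistent with the non-homogeneous time scaling $\rho^2\E_\beta$, and it must be checked that the frozen matrix $\bar\bA(t)$, which is only measurable in $t$, still satisfies the degenerate ellipticity \eqref{ellip-cond} with weight $\beta$. For the latter, averaging $\bA$ over $B_\rho(x_0)$ produces ellipticity with respect to the constant $\beta$-average rather than $\beta(x)$. This is why the comparison equation should instead be taken as $v_t-\textup{div}(\bar\bA\,\beta^{-1}\bar\beta\cdots)$, or, more simply, why the partial oscillation is measured through $\Theta=|\bA-\bar\bA|\beta^{-1}$. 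The first thing to try is to define the frozen operator as $\bar{\mathbf a}(t)\beta(x)$, where $\bar{\mathbf a}(t)$ is the average of $\bA\beta^{-1}$. This keeps the weight structure and depends on $x$ only through $\beta$. If that proves awkward, the fallback is the compactness argument.
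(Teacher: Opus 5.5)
\textbf{There is a genuine gap in Step 2, which is the heart of the proof.}

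Steps 1 and 3 require an $x$-independent frozen matrix $\bar\bA(t)=(\bA)_{B_\rho}(t)$. With that choice, $\Theta=|\bA-\bar\bA|\beta^{-1}\le\nu^{-1}\bigl(1+(\beta)_{B_\rho}\beta(x)^{-1}\bigr)$ is \emph{not} bounded by $2\nu^{-1}$. Already for $\bA=\beta I$ it equals $|1-(\beta)_{B_\rho}/\beta|$, which blows up where $\beta$ degenerates.

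More seriously, even granting the higher integrability of $\nabla u$, the energy estimate for $w=u-v$ with $v=u$ on the parabolic boundary does not close.
\begin{itemize}
\item The splitting that produces your term $(\bA-\bar\bA)\nabla u\cdot\nabla w$ has coercive part $\wei{\bar\bA\nabla w,\nabla w}\ge\nu(\beta)_{B_\rho}|\nabla w|^2$. This is only \emph{unweighted} control, and it does not dominate $\int|\nabla w|^2\beta$ where $\beta$ is large.
\item The forcing $(\bA-\bar\bA)\nabla u$ has size $\beta|\nabla u|$. In general it lies only in unweighted $L^{q_0}$ with $q_0<2$ (Lemma \ref{L-q-2-wei}-(ii)), not in $L^2$.
\item In fact $u$ itself is only in unweighted $\W^{1,p_0}$, $p_0<2$. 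So your $v$ is only a $\W^{1,p_0}$-solution, and $w$ is not an admissible test function. Even the interior Lipschitz bound for such a $v$ needs more than an energy argument; the paper uses the Brezis-type duality of Lemma \ref{Lipschitz-constant}.
\item The other splitting, $\wei{\bA\nabla w,\nabla w}+\wei{(\bA-\bar\bA)\nabla v,\nabla w}$, does give weighted coercivity and is controlled by $\|\nabla v\|_{L^\infty}^2\int\Theta^2\beta$. But $\nabla v$ is bounded only in the interior. Inserting a cut-off leads to Lemma \ref{compare-lemma-1}, whose right-hand side requires knowing beforehand that $\hat w$ is small both in $L^2(\beta)$ and in unweighted $L^2$ (the term with $\varphi\varphi_t$ carries no weight).
\end{itemize}
Producing that smallness is the actual difficulty, and your proposal does not supply it.

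\textbf{Your suggested remedy breaks Step 1.} Freezing to $\bar{\mathbf a}(t)\beta(x)$ makes $\Theta$ bounded, but spatial difference quotients no longer commute with the operator, and Lipschitz bounds genuinely fail. For instance, $\beta=|x_1|^{\alpha}$ with $0<\alpha<1/n_0$ belongs to $A_{1+\frac{1}{n_0}}$. Then $v(x)=\int_0^{x_1}|s|^{-\alpha}\,ds$ is a time-independent solution of $v_t-\textup{div}(\beta\nabla v)=0$ with $|\nabla v|=|x_1|^{-\alpha}$ unbounded. So the density estimate of Step 3 collapses.

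\textbf{What is missing} is the compactness argument of Lemma \ref{L2-comparision}, with its specific ingredients:
\begin{itemize}
\item the Poincar\'e-type bound of Lemma \ref{L-2-inter-u-est}, itself proved by compactness, which gives uniform unweighted $\W^{1,p_0}$ bounds;
\item Aubin--Lions compactness;
\item identification of the limit equation, even though $(\bA_k)_{B_4}(t)$ converges only weak-$*$ in $t$ (handled by integrating by parts in $x$ against the strongly converging $u_k$);
\item correctors $g_k$ that turn the limit into genuine solutions $v_k$ of the $k$-th frozen equation;
\item interpolation between strong unweighted $L^{p_0}$ convergence and a uniform $L^{2\kappa}(\beta)$ bound, which gives $L^2(\beta)$-smallness of $\hat w_k$.
\end{itemize}
Only then do Lemma \ref{imbed-epsilon-0309} (passing to unweighted $L^2$) and Lemma \ref{compare-lemma-1} yield Proposition \ref{L2-gradient-comparision}. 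After that, your Step 3 is the standard part. Listing ``the compactness argument'' as a fallback, without these ingredients, leaves the central step open.
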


To study local boundary regularity estimates for the class of equations \eqref{main-eqn}, we consider the equation in the upper-half parabolic cylliner
\begin{equation} \label{eqn-bdr-10-4}
\left\{
\begin{aligned}
u_t - \textup{div}(\bA (x,t) \nabla u) &= \textup{div}(\beta(x) F(x,t)) \quad &&\text{in} \quad Q_{2\Lambda, \beta}^+, \\[4pt]
u &=  0 \quad &&\text{on}\quad  T_{2\Lambda}\times \Gamma_{\beta}(2\Lambda).
\end{aligned} \right.
\end{equation}
Here, for each $x_0 \in \mathbb{R}^n$, and $\rho>0$, $T_{\rho}(x_0) =B_\rho(x_0) \cap (\mathbb{R}^{n-1} \times \{0\})$, $B_\rho^+(x_0) = B_\rho(x_0) \cap (\mathbb{R}^{n-1} \times (0, \infty)$, and $Q_{\rho}^+ (z_0) = B_\rho^+(x_0) \times \Gamma_{\beta}(x_0, \rho)$ with $\Gamma_{\beta}(z_0, \rho) = (t_0 - r^2\E_{\beta}(x_0, \rho), t_0]$.  When $x_0 =0$ and $t_0 =0$, we write $T_\rho = T_\rho(0), B_\rho^+ = B_\rho^+(0), \Gamma_\beta(\rho) = \Gamma_\beta((0,0), \rho)$, et cetera.

\smallskip
Next, we introduce functional spaces for weak solutions of \eqref{eqn-bdr-10-4}. For a given upper-half ball $B^+ = B_r^+$ with some $r>0$, and for two numbers $S < T$, and $p \in (1, \infty)$,  a function $u \in L^p((S, T), W^{1,p}(B^+))$ is said to be in $\hW^{1,p}(B^+ \times (S,T), \beta)$ if $u \vert_{T_r \times (S, T)} =0$ in the sense of trace, and  
\[  
u_t \in L^{p}((S, T), W^{-1, p}(B^+, \beta)), 
\]  
where $W^{-1,p}(B^+, \beta)$ denotes the dual space of $W^{1, p'}_0(B^+)$  with $\frac{1}{p} + \frac{1}{p'} =1$. We write
\begin{align*}
\|u\|_{\hat{\W}^{1,p}(B^+ \times (S,T), \beta)} & = \|u\|_{L^p((S,T), W^{1,p}(B^+, \beta))}  +  \|u_t\|_{ L^{p}((S, T), W^{-1, p}(B^+, \beta))}.
\end{align*}
As before, a function $u \in \hat{\W}^{1,2}(Q_{2\Lambda, \beta}^+, \beta)$ is a weak solution of \eqref{eqn-bdr-10-4} if
\begin{align*}
& -\int_{Q_{2\Lambda, \beta}^+} u(x,t) \partial_t\varphi(x,t) dx dt + \int_{Q_{2\Lambda, \beta}^+} \wei{\bA(x,t) \nabla u(x,t), \nabla \varphi(x,t)} dx dt \\
& = -\int_{Q_{2\Lambda, \beta}^+} \wei{F(x,t), \nabla \varphi(x,t)} \beta(x) dx dt, \quad \forall \ \varphi \in C_0^\infty(Q_{2\Lambda, \beta}^+).
\end{align*}
The following theorem on local boundary regularity estimates is another main result of the paper.
\begin{theorem} \label{inter-theorem-bdry} For every $\nu \in (0,1)$, $M_0 \geq 1$, and $p \in [2,\infty)$, there exist a sufficiently small positive constant $\bar{\delta}_0= \bar{\delta}_0(n, \nu, p, M_0)$ and a positive constant $N=N(n, \nu, p, M_0)$ such that the following assertion holds. Suppose that $\bA$ satisfies \eqref{ellip-cond} in $Q_{2\Lambda,\beta}^+$ and its corresponding $\beta$ satisfies \eqref{beta-cond}.  Suppose also that
\[
[[\bA]]_{\textup{BMO}}(Q_{1,\beta}^+, 1) \leq \bar{\delta}_0
\]
and $u \in \hW^{1,2}(Q_{2\Lambda, \beta}^+, \beta)$ is a weak solution of  \eqref{eqn-bdr-10-4}. If  $F \in L^p(Q_{2\Lambda,\beta}^+)^n$, then $u \in \hW^{1,p}(Q_{1,\beta}^+, \beta)$ and
\begin{align*}
\|u\|_{ \hW^{1,p}(Q_{1,\beta}^+, \beta)} \leq N\Big(\|\nabla u\|_{L^2(Q_{2\Lambda,\beta}^+, \beta)}+\|F\|_{L^p(Q_{2\Lambda,\beta}^+)}\Big).
\end{align*}
\end{theorem}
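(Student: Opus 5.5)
The plan is to mirror the interior argument for Theorem \ref{inter-theorem}, now in the half-cylinder with the flat boundary $T_{2\Lambda}$. We use the Caffarelli--Peral level-set method, with a freezing/approximation step at every scale.

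\textbf{Step 1: reference problem.} Consider the frozen equation
\[
v_t - \textup{div}(\bar{\bA}(t)\nabla v) = 0,
\]
where $\bar{\bA}(t) = (\bA)_{B^+_\rho(x_0)}(t)$, posed in a small half-cylinder $Q^+_{\rho,\beta}(z_0)$ with $v = 0$ on the flat part. We impose $v=u$ on the rest of the parabolic boundary. This is solvable in $\hW^{1,2}$ by Galerkin/energy methods, since the averaged matrix inherits \eqref{ellip-cond} relative to the averages of $\beta$.

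The key estimate is a boundary $L^\infty$ gradient bound:
\[
\|\nabla v\|_{L^\infty(Q^+_{\rho/2,\beta}(z_0))} \le N \Big(\fint_{Q^+_{\rho,\beta}(z_0)} |\nabla v|^2 \beta(dxdt)\Big)^{1/2}.
\]
Since $\bar{\bA}$ depends only on $t$, we plan to get it as follows. Difference quotients in the tangential directions $x'$ give bounds on $\nabla_{x'} v$. Then the equation gives $\partial_n(\bar a_{nn}\partial_n v)$ in terms of $\partial_t v$ and tangential derivatives, which recovers $\partial_n v$. The parabolic scaling $\rho^2\E_\beta(x_0,\rho)$ makes this normalization consistent. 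Here we use that $\bar{\bA}$ has size comparable to $\fint\beta$, together with the $A_{1+1/n_0}$ relation $\fint \beta \cdot \E_\beta \approx$ const (up to $M_0$).

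\textbf{Step 2: approximation.} Let $w = u - v$ and test with $w$. This gives
\[
\fint |\nabla w|^2 \beta \le N\Big(\delta^{2\theta}\fint|\nabla u|^2\beta + \fint |F|^2\beta\Big).
\]
To control the term $\int (\bA - \bar{\bA})\nabla v\cdot\nabla w$, we use $|\bA-\bar{\bA}|\le \Theta\beta$, the $L^\infty$ bound on $\nabla v$, and Hölder with the BMO smallness $\bar\delta_0$. We need a reverse Hölder/Gehring self-improvement for $\nabla u$ in weighted form, or else we bound $\nabla v$ in $L^\infty$ directly, which avoids it. Alternatively, following the abstract, we would use a compactness argument, arguing by contradiction with weighted Sobolev/Poincaré and Aubin--Lions type compactness in $\hW^{1,2}$.

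\textbf{Step 3: level sets and covering.} Using the quasi-metric structure of Remark \ref{quasi-metric lemma} and a Vitali-type covering (Krylov--Safonov "crawling ink spots" for the doubling measure $\beta(dxdt)$), we derive the standard density estimate
\[
\beta(\{\mathcal{M}(|\nabla u|^2) > K^2\lambda\}) \le \epsilon\, \beta(\{\mathcal{M}(|\nabla u|^2)>\lambda\}) + \beta(\{\mathcal{M}(|F|^2) > \gamma\lambda\}).
\]
Both interior cylinders and boundary half-cylinders are handled; the interior case is the same as in Theorem \ref{inter-theorem}. Summing over $\lambda = K^{2k}$ with $\epsilon K^{p} < 1$ and applying the weighted Hardy--Littlewood maximal bound on $L^{p/2}(\beta)$ (valid since $\beta(dxdt)$ is doubling on these cylinders) gives the $L^p$ gradient estimate.

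\textbf{Step 4: time derivative.} Having the gradient estimate, we read off from the equation that $u_t = \textup{div}(\bA\nabla u + \beta F)$. For test functions in $W^{1,p'}_0(B_1^+)$ this gives
\[
\|u_t\|_{L^p(W^{-1,p})} \le N\big(\|\nabla u\|_{L^p(\beta)} + \|F\|_{L^p}\big),
\]
using $|\bA|\le\nu^{-1}\beta$. The $L^p$ bound on $u$ itself follows from a weighted Poincaré inequality using the zero trace on the flat boundary.

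The main obstacle is the boundary Lipschitz estimate in Step 1. The frozen coefficient is still measurable in $t$, and it is also weighted: its size is tied to $\beta$ averages while time is scaled by $\E_\beta$. Matching these scalings through the $A_{1+1/n_0}$ condition is delicate. I would first try the tangential difference quotient plus equation-for-$\partial_n$ argument after rescaling to unit size.
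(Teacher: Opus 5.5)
Your Steps 3 and 4 match what the paper does: Caffarelli--Peral level sets, then reading $u_t$ off the equation and using the Fabes--Kenig--Serapioni boundary Poincar\'e inequality. The gap is in Steps 1--2, which is exactly the point the paper treats as its main difficulty.

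The frozen operator $\partial_t-\textup{div}(\bar{\bA}(t)\nabla\,\cdot)$ with $\bar{\bA}\approx(\beta)_{B}$ has an \emph{unweighted} energy: testing with $v$ controls $(\beta)_B\int|\nabla v|^2\,dxdt$. Your $u$ only has $\nabla u\in L^2(\beta)$. By Lemma \ref{L-q-2-wei}, this gives only the unweighted integrability $\nabla u\in L^{p_0}$ with $p_0<2$. So the Dirichlet problem with $v=u$ on the lateral and bottom boundary cannot be solved by Galerkin/energy methods. After subtracting $u$, the forcing $(\bar{\bA}-\bA)\nabla u-\beta F$ lies only in $L^{p_0}$, so at best $L^{p_0}$-theory gives $v\in\W^{1,p_0}$. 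Three things then break.
(i) $w=u-v$ is not known to have $\nabla w\in L^2(\beta)$ up to the lateral boundary, so testing the $w$-equation with $w$ on the whole cylinder is unjustified.
(ii) Your $L^\infty$ bound on $\nabla v$ holds only on $Q^+_{\rho/2,\beta}(z_0)$, so $\int\Theta_{\bA,\rho}^2\beta|\nabla v|^2$ is uncontrolled near the lateral boundary. The Gehring fallback would need global weighted higher integrability for this mismatched Dirichlet problem, and that is not available.
(iii) A $\W^{1,p_0}$ solution with $p_0<2$ lies below the energy class. The difference-quotient argument of Step 1 is sound for energy solutions, but it cannot start without a Brezis-type duality argument upgrading $v$ to $\W^{1,2}_{\textup{loc}}$; this is what Lemma \ref{Lipschitz-constant} supplies.

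The paper never uses $u$ as boundary data for $v$. In Lemma \ref{L2-comparision-bdry} the comparison function comes from a contradiction argument.
\begin{itemize}
\item The uniform bound $\|u_k\|_{\hW^{1,p_0}(Q_4^+)}\le N$ follows from the Poincar\'e-type Lemma \ref{L-2-bdry-u-est} together with Lemma \ref{L-q-2-wei}.
\item Compactness is taken in \emph{unweighted} $L^{p_0}$, not in $\hW^{1,2}(\beta_k)$ as your fallback suggests, because the weights $\beta_k$ vary along the sequence.
\item The limit $u_0$ solves the frozen problem with the weak-$*$ limit of $(\bA_k)_{B_4^+}(t)$ and is Lipschitz by the duality lemma.
\item A corrector $g_k\to 0$ in $L^{p_0}$, built via odd extension plus uniqueness, converts $u_0$ into an exact solution $v_k$ with coefficient $(\bA_k)_{B_4^+}(t)$.
\item $L^2(\beta_k)$-closeness follows by interpolating the strong $L^{p_0}$ convergence against a uniform $L^{2\kappa}(\beta_k)$ bound.
\end{itemize}
Gradient closeness is then proved only on an inner half-cylinder, via the cut-off Caccioppoli estimate of Lemma \ref{bdry-Caccioppoli}. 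That estimate uses only the interior Lipschitz bound on $\nabla v$. Its unweighted term $\int w^2|\varphi\,\partial_t\varphi|\,dxdt$ is handled with the interpolation inequality of Lemma \ref{imbed-epsilon-0309}.

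None of these ingredients appears in your proposal, so Step 2 as written does not deliver the approximation that Step 3 needs.
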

\subsection{Examples} We provide a few examples of the weights $\beta$ for which the main theorems (Theorems \ref{main-theorem}, \ref{inter-theorem}, \ref{inter-theorem-bdry}) are applicable when $\bA(x,t) = \beta(x) \bar{\bA}(t)$ where $\bar{\bA}(t)$ is an $n\times n$ symmetric matrix of measurable functions and it satisfies the uniformly elliptic and boundedness conditions.
\begin{itemize}
\item[\textup{(a)}]  Let $\phi: \mathbb{R}^n\setminus \{0\} \rightarrow \mathbb{R}$ be defined by
\[
\phi(x) = \left\{
\begin{array}{ll}
-\ln|x| & \quad \textup{if} \quad |x| \leq e^{-1},\\
1& \quad \textup{otherwise}.
\end{array} \right.
\]
It is known that $\phi \in A_1 \subset A_{1+\frac{2}{n_0}}$ with $[\phi]_{A_1} = M_0$ for some $M_0 = M_0(n)$, see \cite[Example 7.1.8, p. 507]{Grafakos-2}. Moreover,  it follows from \cite[Remark 1.2]{FP-2} that there exists a constant $N = N(n)>0$ such that
\begin{equation} \label{BMO-varphi-example}
[[\phi]]_{\textup{BMO}}(B_{2r_0}, r_0) \leq \frac{N}{|\ln(4r_0)|} \quad \text{for} \quad  r_0 \in (0, \frac{1}{10e}).
\end{equation}
Hence, by choosing $r_0$ sufficiently small, our main theorems are applicable when $\beta = \phi$.
\item[\textup{(b)}] Let $\phi_\alpha(x) = |x|^\alpha$ with $x \in \mathbb{R}^n$. As in \cite[Example 7.1.7, p. 506]{Grafakos-2}, it is known that $\omega \in A_{1+\frac{1}{n_0}}$ when $\alpha \in (-n,\frac{n}{n_0})$. In addition, as calculated in \cite{CMP, CJP}, there exist $M_0 = M_0(n)\geq 1$ and $N = N(n) >0$ such that
\[
[\phi_\alpha]_{A_{1+\frac{1}{n_0}}} \leq M_0, \quad \text{and} \quad [[\phi_\alpha]]_{\textup{BMO}}(B_2, 1) \leq N|\alpha|, \quad \forall \ |\alpha| \leq 1/2.
\]
Therefore, the main theorems are applicable for $\beta = \phi_\alpha$ with $|\alpha|$ is sufficiently small. 

\item[\textup{(c)}] For given $m$-distinct points $\{x_k\}_{k=1}^m$ in $B_1$,  $c_k >0$, and $|\alpha_k|$ sufficiently small for $k \in \{1, 2,\ldots, m\}$, we define
\begin{align*}
\beta_1(x)  & =  \prod_{k=1}^{m}  \phi(x-x_k), \quad \beta_2(x) = \sum_{k=1}^{m} c_k \phi_{\alpha_k}(x-x_k), \quad \text{and} \\\
\beta_3(x) & = \prod_{k=1}^{m} \phi_{\alpha_k}(x-x_k) \quad \text{for}\quad x \in \mathbb{R}^n.
\end{align*}
By using localization, dilation, translation, and a covering argument, it follows from (a) and (b) that the main theorems are applicable for $\beta \in \{\beta_1, \beta_2, \beta_3\}$.
\end{itemize}
\subsection{Literature Review and State of the Art} 
The systematic study of classical regularity for second-order divergence-form elliptic equations with singular-degenerate coefficients was initiated in the landmark papers \cite{Fabes, Fabes-1} by Fabes, Kenig, and Serapioni. For equations of the form $\operatorname{div}(\mathbf{A}(x)\nabla u) = 0$, where the coefficient matrix $\mathbf{A}(x)$ satisfies the structural conditions in \eqref{ellip-cond} with an $A_2$ weight $\beta$, they extended the De Giorgi-Nash-Moser framework. By establishing weighted Sobolev inequalities and utilizing Moser's iteration technique, they proved a interior Harnack inequality, thereby deducing that weak solutions are locally H\"{o}lder continuous. In the parabolic setting, this framework seamlessly accommodates space-time operators where the time derivative and spatial diffusion are balanced against singular or degenerate Muckenhoupt weights; see, for instance, \cite{Chia-1, Chia-2, Chia-3}, where Harnack inequalities and H\"{o}lder regularity estimates are established under more restrictive assumptions on the weights. 

\smallskip
When the structural coefficients possess higher regularity adapted to the weights, a corresponding weighted Schauder theory ensures that solutions gain regularizing benefits within anisotropic, weight-dependent H\"{o}lder spaces that capture the underlying geometry of the degeneracies or singularities. These refinements have been advanced in recent works such as \cite{AFV2, AFV, STV1, STV2}, which analyze operators featuring a explicit directional degeneracy of the form $\mathbf{A}(x) = x_n^\alpha \bar{\mathbf{A}}(x)$ for $x = (x', x_n) \in \mathbb{R}^{n-1} \times (0, \infty)$, where $\bar{\mathbf{A}}$ is uniformly elliptic, bounded, and sufficiently smooth, and the exponent satisfies $\alpha \in (-1, \infty)$ or $\alpha \in (-\infty, 1)$ depending on the prescribed boundary conditions. 

\smallskip
From a different geometric perspective, a parallel line of research considers settings where the minimum and maximum eigenvalues of $\mathbf{A}(x)$ satisfy specific integrability conditions rather than Muckenhoupt structural bounds. H\"{o}lder regularity estimates in this framework were pioneered in the classical work of Trudinger \cite{Tru-1, Tru-2}, with recent breakthroughs achieved in \cite{BS}.

\smallskip
Transitioning from qualitative H\"{o}lder continuity to quantitative integrability, the development of a Calder\'on-Zygmund theory for divergence-form equations with singular-degenerate coefficients requires adapting classical harmonic analysis to weighted functional spaces. For a degenerate elliptic system or equation $\text{div}(\bA(x)\nabla u) = \text{div}(F)$ where $\bA(x)$ satisfies a Muckenhoupt $A_2$ structural condition, it requires that the matrix $\bA(x)$ does not oscillate too violently in order to for  the gradient $\nabla u$ to be in higher weighted Lebesgue spaces $L^p(\Omega, \beta)$. Under a smallness conditions on the mean oscillation of the coefficients $\bA$ with respect to the weight $\beta$, recent breakthroughs starting \cite{CMP, CMP-1} have successfully established weighted $W^{1,p}$ estimates of Calder\'on-Zygmund type for elliptic equations and systems. See also \cite{BDGP} for a recent development in case of matrix weights with a different type of weighted estimates, and \cite{DPS} for results on quasi-linear elliptic equations with the weights as the power of the distance to the considered spatial domains. In the parabolic setting, this manifests as a maximal $L^p$ regularity framework where the time derivative $\beta u_t$ and the spatial diffusion term $\text{div}(\beta \bar{\bA} \nabla u)$ are controlled in weighted spaces, establishing existence, uniqueness, and optimal solvability under minimal regularity assumptions on the underlying Muckenhoupt weights. However, this direction is mostly achieved when the weight $\beta$ is specific such as the power of the distance to the considered spatial domains. A typical example in this framework is the following class of parabolic equations
\begin{equation} \label{DP-eqn}
\mu(x_n) u_t - \text{div}(\mu(x_n) \bar{\bA}(x,t) \nabla u) = \text{div}[\mu(x_n)F].
\end{equation}
where $x = (x', x_n) \in \mathbb{R}^{n-1} \times (0, \infty)$, $\mu(x_n) \sim x_n^\alpha$ with $\alpha \in \mathbb{R}$, and $\bar{\bA}$ is uniformly elliptic and bounded. Here, note that the singularity or degeneracy appears only on the boundary $\{x_n =0\}$ of the considered domain. Theories on well-posedness and regularity estimates in mixed-norm weighted Sobolev spaces have recently been developed in the series of papers \cites{DP, DP-0, DP1, DP1, MNS-1} under various conditions on $\alpha$ depending on  boundary conditions on $\{x_n =0\}$ for both equations in divergence-form as in \eqref{DP-eqn} and its corresponding non-divergence form.

\smallskip
In contrast to the unified framework of \eqref{DP-eqn}, isolating the weights exclusively within either the time derivative or the diffusion terms introduces distinct structural challenges; consequently, both the selection of appropriate functional spaces and the development of a regularity theory in Sobolev spaces become highly non-trivial. Indeed, the classical work \cite{Chia-1} demonstrates that standard H\"{o}lder regularity estimates can fail in these mismatched settings. This obstacle has spurred recent investigations, such as those in \cite{DPT2,FP-1} for equations in divergence-form and in \cite{ DPT2, CJP, FP-2} for equations in non-divergence form. In particular, \cite{FP-1} establishes the well-posedness and regularity estimates in suitable weighted Sobolev spaces for the following class of equations under a partial VMO condition on $\bar{\mathbf{A}}$:
\begin{equation} \label{FP-2.eqn}
\omega(x) u_t - \operatorname{div}[\bar{\mathbf{A}}(x,t) \nabla u] = \operatorname{div}[F(x,t)]
\end{equation}
where $\bar{\mathbf{A}}(x,t)$ is uniformly elliptic and bounded, and the weight satisfies $\frac{1}{\omega} \in A_{1+\frac{2}{n_1}}$ with $n_1 = \max\{n, 2\}$.  The special case when $\omega(x) = x_n^\alpha$ for $x = (x', x_n) \in \mathbb{R}^{n-1} \times (0, \infty)$ and $\alpha \in (-2, 0)$, similar results in mixed-norm weighted Sobolev spaces are proved in \cite{DPT2} for equations in divergence-form and in \cite{ DPT1} for the corresponding ones in non-divergence form.

\smallskip
It is worth noting that the class of equations \eqref{FP-2.eqn} structurally departs from our model \eqref{main-eqn}. More precisely, the singularity or degeneracy in \eqref{main-eqn} are embedded directly within the spatial diffusion coefficients inside the divergence operator, meanwhile the singularity or degeneracy in \eqref{FP-2.eqn} are due to the volumetric heat capacity coefficients. Consequently, the functional spaces required here for \eqref{main-eqn} fundamentally differ from those developed in \cite{FP-1, DPT2}. The results of the present paper are entirely new and naturally complement this existing line of literature. Interestingly, in the non-divergence-form setting, the counterparts to these two classes of equations become equivalent; we refer interested readers to \cite{CJP, FP-2}, where Krylov-Safonov Harnack inequalities and $W^{2,\epsilon}$ Lin-type estimates are successfully established.

\subsection{Main Difficulties and Key Strategies} 
We employ the freezing coefficient technique in conjunction with the level-set method introduced by Caffarelli and Peral \cite{CP}. Within the framework of well-posedness and regularity estimates in Sobolev spaces, the primary challenge in analyzing the class of equations \eqref{main-eqn} stems from a structural imbalance: weights appear exclusively in either the time derivative or the diffusion terms. This asymmetry is clearly reflected in the Caccioppoli inequalities, where both weighted and unweighted $L^2$-norms appear on the right-hand side (see Lemma \ref{Caccio-1} and Lemma \ref{bdry-Caccioppoli} below). This phenomenon is absent in the elliptic setting \cite{CMP, CMP-1, MP, BDGP} as well as in the classes of parabolic equations studied in \cite{DP, DP1, DP3}. To overcome this difficulty, we identify a suitable Muckenhoupt class of weights and establish corresponding weighted inequalities to control these mismatched terms. Furthermore, we introduce a family of inhomogeneous parabolic cylinders $\{Q_{\rho, \beta}(z)\}$ that is invariant under the scaling properties of \eqref{main-eqn} to inherently capture the imbalance in the these mismatched terms, adapting an approach successfully utilized in recent works \cite{CJP, FP-1, FP-2}. 

\smallskip
We note that locally, weak energy solutions to \eqref{main-eqn} naturally belong to the weighted Sobolev space $\mathcal{W}^{1,2}(Q_{1,\beta}, \beta)$, whereas weak solutions to the corresponding frozen coefficient equations reside in unweighted spaces $\mathcal{W}^{1,p_0}(Q_{1,\beta})$ for some $p_0 \in (1,2)$. Consequently, to establish the local closeness of these two solutions in weighted $L^2(Q_{1, \beta}, \beta)$, we implement a compactness argument. This limiting procedure yields sequences of coefficient matrices $\{\mathbf{A}_k\}$, weights $\{\beta_k\}$, and weak solutions $u_k \in \mathcal{W}^{1,2}(\Omega_T, \beta_k)$. To prevent the underlying functional spaces $\mathcal{W}^{1,2}(\Omega_T, \beta_k)$ from varying along the sequence, we prove various weighted inequalities that hold uniformly with respect to the changing measure $\beta_k(x) dxdt$. To this end, a new and special type of Poincar\'e-type inequality for weak solutions of \eqref{main-eqn} is established (see Lemmas \ref{L-2-inter-u-est} and \ref{L-2-bdry-u-est}). In addition, a parabolic duality argument, adapted from the elliptic framework introduced by H.~Brezis \cite{Brezis}, is employed to obtain the regularity required for weak $\mathcal{W}^{1, p_0}$-solutions of the frozen-coefficient equations. Ultimately, the family of inhomogeneous parabolic cylinders $\{Q_{\rho, \beta}(z)\}$ plays a crucial role both in implementing this duality argument and in executing the level-set method.
\subsection{Organization of the paper}  The rest of the paper is organized as follows. In Section~\ref{wei-ses}, we recall the definitions of Muckenhoupt weight classes, summarize their essential 
properties, and introduce the relevant function spaces. We also state and  prove several key weighted inequalities in this section. Section~\ref{Lips-est-sec} is devoted to establishing interior and boundary Lipschitz estimates for $\W^{1, p_0}$-solutions of equations with frozen coefficients, where $p_0 \in (1, 2)$, utilizing a parabolic duality argument adapted from the elliptic framework of H.~Brezis~\cite{Brezis}. In Section~\ref{interior section}, we establish the local interior energy estimates and implement local interior perturbation techniques, culminating in the proof of Theorem~\ref{inter-theorem}. Section~\ref{bdr-section} presents the corresponding boundary theory and the proof of Theorem~\ref{inter-theorem-bdry}. Finally, in Section~\ref{global-section}, we complete the proof of Theorem~\ref{main-theorem} by combining Theorem~\ref{inter-theorem} and Theorem~\ref{inter-theorem-bdry} with boundary flattening techniques and a partition of unity.
\section{preliminary analysis and definitions}  \label{wei-ses}
\subsection{Muckenhoupt  weights and weighted inequalities} Let us begin by recalling with the definition of $A_q$ class of Muckenhoupt  weights.
\begin{definition} \label{A-p-def}  Let $q \in (1, \infty)$, and $\mu : \mathbb{R}^n \rightarrow [0, \infty)$ be a locally integrable function. We say that $\mu$ belongs to the class of \emph{$A_q$ Muckenhoupt  weights} if $[\mu]_{A_q} < \infty$, where
\[
[\mu]_{A_q}=\sup_{r>0,\, x_0 \in \mathbb{R}^n}\left(\fint_{B_r(x_0)}\mu(x)\, dx \right)\left(\fint_{B_r(x_0)}\mu(x)^{-\frac{1}{q-1}}\, dx\right)^{q-1}.
\]
\end{definition}
Note that if $\beta \in A_{1+\frac{1}{n_0}}$, then it follows directly from the definition that $\bar{\beta}(x) = [\beta(x)]^{-n_0} \in A_{n_0+1}$. As such, $\bar{\beta}$ satisfies several important properties of Muckenhoupt  weights as demonstrated in \cite[Proposition 7.2.8, p. 521]{Grafakos-2}.
\begin{lemma} \label{property} Let $M_0 \geq 1$, and $\beta$ be a weight satisfying \eqref{beta-cond}. Define $\bar{\beta}(x) = [\beta(x)]^{-n_0}, \ x \in \R^n$. The following assertions hold.
\begin{itemize}
\item[\textup{(i)}] There exists a constant $\eta_0 = \eta_0(n, M_0) > 1$, called the doubling constant, such that
\begin{equation*} \label{doubling const}
\bar{\beta}(B_{2\rho}(x)) \leq \eta_0\, \bar{\beta}(B_\rho(x)) \quad \text{for all balls } B_\rho(x) \subset \mathbb{R}^n.
\end{equation*}
\item[\textup{(ii)}] There exist constants $N = N(n, M_0) > 0$ and $\zeta_0 = \zeta_0(n, M_0) \in (0,1)$ such that
\begin{equation*} \label{reverse holder}
\bar{\beta}(S_1) \leq N \left( \frac{|S_1|}{|S_2|} \right)^{\zeta_0} \bar{\beta}(S_2)
\end{equation*}
for all measurable sets $S_1 \subset S_2 \subset \mathbb{R}^n$ with $|S_2| \not=0$.
\item[\textup{(iii)}] For $\theta \in (0,1)$, there exists a constant $\eta = \eta(n, M_0, \theta) \in (0,1)$ such that
\[
\bar{\beta}(S_1) \leq \eta\, \bar{\beta}(S_2)
\]
for all measurable sets $S_1 \subset S_2 \subset \mathbb{R}^n$ with $|S_1| \leq \theta\, |S_2|$. 
\end{itemize}
\end{lemma}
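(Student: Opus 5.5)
The plan is to transfer everything to the standard theory of $A_q$ weights, as the paper indicates just before the lemma. First I would check that $\bar\beta=\beta^{-n_0}$ lies in $A_{n_0+1}$ with a quantitative bound on its constant. With $q=1+\frac1{n_0}$ one has $\frac{1}{q-1}=n_0$. Hence
\[
[\bar\beta]_{A_{n_0+1}}=\sup_{B}\Big(\fint_B\beta^{-n_0}\,dx\Big)\Big(\fint_B\beta\,dx\Big)^{n_0}=\sup_B\Big[\Big(\fint_B\beta\,dx\Big)\Big(\fint_B\beta^{-n_0}\,dx\Big)^{1/n_0}\Big]^{n_0}=[\beta]_{A_{1+\frac1{n_0}}}^{n_0}\le M_0^{n_0}.
\]
This is because $\bar\beta^{-1/n_0}=\beta$. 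So $\bar\beta$ is an $A_{n_0+1}$ weight whose constant depends only on $n$ and $M_0$. All constants below then depend only on $(n,M_0)$.

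Given this, the three assertions are the conclusions of \cite[Proposition 7.2.8, p. 521]{Grafakos-2} applied to $w=\bar\beta$ and $p=n_0+1$. I would nevertheless indicate the mechanism for each.

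\emph{Doubling (i).} For $S\subset B$, Hölder's inequality together with the $A_p$ condition gives
\[
\Big(\frac{|S|}{|B|}\Big)^{p}\le[\bar\beta]_{A_p}\,\frac{\bar\beta(S)}{\bar\beta(B)}.
\]
Taking $S=B_\rho(x)$ and $B=B_{2\rho}(x)$ yields (i) with $\eta_0=2^{n(n_0+1)}M_0^{n_0}$.

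\emph{Estimate (iii).} Apply the same display to $S_2\setminus S_1$ in place of $S$ (and $S_2$ in place of $B$). Using $|S_2\setminus S_1|\ge(1-\theta)|S_2|$, this gives $\bar\beta(S_2\setminus S_1)\ge(1-\theta)^pM_0^{-n_0}\bar\beta(S_2)$. Hence (iii) holds with $\eta=1-(1-\theta)^pM_0^{-n_0}\in(0,1)$.

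\emph{Estimate (ii).} This uses the reverse Hölder inequality for $A_p$ weights. There is $\epsilon=\epsilon(n,M_0)>0$ with $\big(\fint_B\bar\beta^{1+\epsilon}\big)^{\frac1{1+\epsilon}}\le C\fint_B\bar\beta$. Applying Hölder on $S_1$ then gives
\[
\bar\beta(S_1)\le|S_1|^{\frac{\epsilon}{1+\epsilon}}\Big(\int_{S_2}\bar\beta^{1+\epsilon}\Big)^{\frac1{1+\epsilon}}\le C\Big(\frac{|S_1|}{|S_2|}\Big)^{\zeta_0}\bar\beta(S_2),
\qquad \zeta_0=\frac{\epsilon}{1+\epsilon}.
\]

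The step I expect to be the genuine obstacle is the generality of $S_2$. Every estimate above uses either the $A_p$ condition or reverse Hölder on $S_2$, and both are available only when $S_2$ is a ball. For (i) this is automatic. For (ii) and (iii) with an arbitrary measurable $S_2$ the assertion cannot hold as written. Take $S_2=S_1\cup E$ with $E$ a huge set far away on which $\bar\beta$ is tiny. Then $|S_1|/|S_2|$ is small while $\bar\beta(S_1)/\bar\beta(S_2)$ stays near $1$. This contradicts both (ii) and (iii).

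So I would prove (ii)–(iii) in exactly the form the cited proposition provides, namely with $S_2$ a ball (or, after the doubling (i), a set comparable to a ball, such as a cylinder base $B_\rho(x_0)$). I would also flag that this is the form the level-set argument later needs. The literal wording for arbitrary measurable $S_2$ should be read, and I would state it, with $S_2$ a ball.
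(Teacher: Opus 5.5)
Your proposal is correct and follows the paper's route. The paper likewise observes that $\bar\beta=\beta^{-n_0}\in A_{n_0+1}$ and cites \cite[Proposition 7.2.8]{Grafakos-2}; your identity $[\bar\beta]_{A_{n_0+1}}=[\beta]_{A_{1+\frac{1}{n_0}}}^{n_0}$ and your derivations of (i)--(iii) simply unpack that citation. Your caveat is also right: (ii) and (iii) hold only when $S_2$ is a ball, as in the cited source and as the paper actually uses them (e.g.\ $\bar\beta_k(B_{7/2})\le\eta\,\bar\beta_k(B_{15/4})$ in the proof of Lemma~\ref{L2-comparision}), and your counterexample shows the literal version with arbitrary measurable $S_2$ is false.
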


The following lemma on the reverse H\"{o}lder property of $A_q$-weights is well known. We also refer the reader to~\cite[Theorem~7.2.2, p.~514; Corollary~7.2.6, p.~519]{Grafakos-2} for the proof.
\begin{lemma}\label{R-Holder} For every $M_0 \geq 1$, $q \in (1, \infty)$, there exist a sufficiently small constant $\gamma = \gamma(n,q, M_0)>0$ and a constant $N = N(n, q, M_0)\geq 1$ such that the following assertions hold.
\begin{itemize}
\item[\textup{(i)}] If $\mu \in A_{q}$ satisfies $[\mu]_{A_q} \leq M_0$, then for every ball $B\subset\mathbb{R}^n$,
\[
\left( \fint_{B} \mu(x)^{1+\gamma} dx \right)^{\frac{1}{1+\gamma}} \leq N \fint_{B} \mu(x) dx.
\]

\item[\textup{(ii)}] If $\mu \in A_{q}$ satisfies $[\mu]_{A_q} \leq M_0$, then $\mu \in A_{q-\gamma}$ and
\[ [\mu]_{A_{q-\gamma}} \leq N.
\]
\end{itemize}
\end{lemma}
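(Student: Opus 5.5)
This is the classical reverse Hölder and openness property of $A_q$ weights. I would follow the standard route through a Gehring-type self-improvement, deriving (ii) from (i) applied to the dual weight.

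\emph{Step 1: a weak $A_\infty$-type estimate for (i).} From the definition of $[\mu]_{A_q}\le M_0$ and Hölder's inequality, for every ball $B$ and measurable $S\subset B$,
\[
\Big(\frac{|S|}{|B|}\Big)^q \le M_0 \frac{\mu(S)}{\mu(B)}.
\]
This shows $\mu$ is doubling. Applying the estimate to $S=\{x\in B:\mu(x)\le \lambda \mu_B\}$ with $\lambda$ small gives the following: there are $\alpha,\theta\in(0,1)$, depending only on $n,q,M_0$, such that
\[
|\{x\in B: \mu(x)\le \theta\,\mu_B\}|\le \alpha|B|
\]
for every ball $B$. Here $\mu_B=\fint_B\mu$.

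\emph{Step 2: Calderón--Zygmund decomposition.} Fix a ball (equivalently a cube) $B$ and levels $\lambda_k = (2^n/\theta)^k \mu_B$. Run a Calderón--Zygmund decomposition of $\mu$ on $B$ at each level $\lambda_k$. The selected cubes $\{Q_j^k\}$ satisfy $\lambda_k<\mu_{Q_j^k}\le 2^n\lambda_k$, and $\mu\le\lambda_k$ a.e. off their union $\Omega_k$.

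Each $Q_j^{k}$ is a union of cubes of generation $k+1$. Applying Step 1 to $Q_j^k$ gives
\[
|\Omega_{k+1}\cap Q_j^k|\le (1-\alpha')|Q_j^k|,
\]
hence $|\Omega_k|\le (1-\alpha')^k|B|$. In the same way, $\mu(\Omega_k)\le(1-\alpha'')^k\mu(B)$.

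Summing then gives
\[
\int_B\mu^{1+\gamma} \le \sum_k \lambda_{k+1}^\gamma\,\mu(\Omega_k\setminus\Omega_{k+1}) + \lambda_0^\gamma\mu(B).
\]
This sum converges, and is bounded by $N\mu_B^\gamma\mu(B)$, once $\gamma$ is chosen so small that $(2^n/\theta)^\gamma(1-\alpha'')<1$. That is exactly (i), first for cubes and then for balls by comparability and doubling.

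\emph{Step 3: deriving (ii).} Set $\sigma=\mu^{-1/(q-1)}$. The dual characterization gives $\sigma\in A_{q'}$ with $[\sigma]_{A_{q'}}=[\mu]_{A_q}^{1/(q-1)}\le N(M_0,q)$. Apply (i) to $\sigma$ with some exponent $1+\gamma_1$.

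Choose $\gamma$ so that $\frac{1}{q-\gamma-1}=\frac{1+\gamma_1}{q-1}$. Then
\[
\Big(\fint_B\mu^{-\frac1{q-\gamma-1}}\Big)^{q-\gamma-1}=\Big(\fint_B\sigma^{1+\gamma_1}\Big)^{\frac{q-1}{1+\gamma_1}}\le N\Big(\fint_B\sigma\Big)^{q-1}.
\]
Multiplying by $\fint_B\mu$ gives $[\mu]_{A_{q-\gamma}}\le N M_0$. Finally, take the common $\gamma$ as the minimum of the two exponents obtained in Steps 2 and 3. This is allowed because reverse Hölder with a larger exponent implies it with a smaller one, by Hölder's inequality.

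\emph{Main obstacle.} The main obstacle is the bookkeeping in Step 2. One must show that the measure decay $\mu(\Omega_k)\le(1-\alpha'')^k\mu(B)$ holds with constants depending only on $n,q,M_0$. This uses Step 1 applied to $\mu$-measure rather than Lebesgue measure, which is the $A_\infty$ inequality in the reversed direction. Since the statement is standard, in the paper one could simply cite \cite[Theorem~7.2.2, Corollary~7.2.6]{Grafakos-2}, as the paper itself does.
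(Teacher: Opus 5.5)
Your argument is correct and is essentially the standard proof that the paper simply cites from \cite[Theorem~7.2.2, Corollary~7.2.6]{Grafakos-2}: Calder\'on--Zygmund stopping cubes at geometric levels combined with the $A_\infty$-type measure decay give the reverse H\"older inequality, and reverse H\"older for the dual weight $\mu^{-1/(q-1)}\in A_{q'}$ gives the openness. One correction to your bookkeeping: the Lebesgue decay $|\Omega_{k+1}\cap Q_j^k|\le\theta|Q_j^k|$ follows directly from the stopping bounds $\lambda_{k+1}|\Omega_{k+1}\cap Q_j^k|\le\mu(Q_j^k)\le 2^n\lambda_k|Q_j^k|$, not from Step~1 (its level-set consequence is never used), whereas the displayed inequality of Step~1 (in its cube version), applied to $S=Q_j^k\setminus\Omega_{k+1}$, is what upgrades this to $\mu(\Omega_{k+1}\cap Q_j^k)\le(1-c)\,\mu(Q_j^k)$ with $c=c(n,q,M_0)>0$.
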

Next, we state and prove the following lemma that provides the relationships between the weighted Lebesgue spaces and the un-weighted Lebesgue spaces.
\begin{lemma}\label{L-q-2-wei} Let $q \in (1, \infty)$ and $M_0 \geq 1$. There exist a sufficiently small constant $\gamma = \gamma(n,  q, M_0) \in (0, q-1)$ and a constant $ N = N(n, q, M_0) > 0$ such that for every $ \mu \in A_q $ with $[\mu]_{A_q} \leq M_0 $ the following assertions hold
\begin{itemize}
\item[\textup{(i)}] For every ball $ B \subset \mathbb{R}^n $ and for $ g \in L^p(B, \mu)$,
\[
\left( \fint_{B} |g(x)|^{\frac{p}{q - \gamma}} \, dx \right)^{\frac{q - \gamma}{p}} 
\leq N \left( \frac{1}{\mu(B)} \int_{B} |g(x)|^p \mu(x) \, dx \right)^{1/p}.
\]
\item[\textup{(ii)}] For every ball $ B \subset \mathbb{R}^n $ and for $ g \in L^2(B, \mu)$
\[
\left(\fint_{B} |g(x) \mu(x)|^{q_0} \, dx \right)^{\frac{1}{q_0}} 
\leq N (\mu)_B\left( \frac{1}{\mu(B)} \int_{B} |g(x)|^2 \mu(x) \, dx \right)^{1/2}
\]
for $q_0 = \frac{2(1+\gamma)}{2+\gamma} >1$, and $(\mu)_B = \frac{1}{|B}\int_{B}\mu(y) dy$.
\end{itemize}
The same assertions also holds for upper-half balls.
\end{lemma}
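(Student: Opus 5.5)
Both assertions follow from H\"older's inequality combined with the self-improving properties of $A_q$ weights in Lemma \ref{R-Holder}. Let $\gamma_1 = \gamma(n,q,M_0)$ and $N_1$ be the constants there. We take $\gamma = \min\{\gamma_1, (q-1)/2\}$, so that $\gamma\in(0,q-1)$. Since $\mu\in A_{q-\gamma_1}$ implies $\mu\in A_{q-\gamma}$ with $[\mu]_{A_{q-\gamma}}\le[\mu]_{A_{q-\gamma_1}}$ (the $A_s$ classes increase in $s$, by Jensen), Lemma \ref{R-Holder}(ii) gives $[\mu]_{A_{q-\gamma}}\le N_1$. Likewise, the reverse H\"older inequality of Lemma \ref{R-Holder}(i) holds with exponent $1+\gamma$, since $\gamma\le\gamma_1$ and $L^{1+\gamma}$-averages are dominated by $L^{1+\gamma_1}$-averages.

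For (i), set $s = q-\gamma>1$ and $r = p/s$. Write $|g|^{r} = |g|^r\mu^{1/s}\cdot\mu^{-1/s}$ and apply H\"older with exponents $s$ and $s'=s/(s-1)$:
\[
\fint_B |g|^{r}\,dx \le \Big(\fint_B |g|^p\mu\,dx\Big)^{1/s}\Big(\fint_B \mu^{-\frac{1}{s-1}}dx\Big)^{\frac{s-1}{s}}.
\]
By the $A_{s}$ condition, $\big(\fint_B\mu^{-1/(s-1)}\big)^{s-1}\le N_1 (\fint_B\mu)^{-1}$. Hence $\fint_B|g|^r \le N_1^{1/s}\big(\frac{1}{\mu(B)}\int_B|g|^p\mu\big)^{1/s}$. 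Raising both sides to the power $s/p$ gives (i).

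For (ii), write $|g\mu|^{q_0} = (|g|^{q_0}\mu^{q_0/2})\cdot\mu^{q_0/2}$ and apply H\"older with exponents $2/q_0$ and $2/(2-q_0)$. Then
\[
\fint_B|g\mu|^{q_0} \le \Big(\fint_B|g|^2\mu\Big)^{q_0/2}\Big(\fint_B\mu^{\frac{q_0}{2-q_0}}\Big)^{\frac{2-q_0}{2}}.
\]
With $q_0 = \frac{2(1+\gamma)}{2+\gamma}$ one computes $2-q_0 = \frac{2}{2+\gamma}$, hence $\frac{q_0}{2-q_0}=1+\gamma$. The reverse H\"older inequality (Lemma \ref{R-Holder}(i)) then bounds $\big(\fint_B\mu^{1+\gamma}\big)^{1/(1+\gamma)}\le N_1(\mu)_B$. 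Since $\frac{2-q_0}{2} = \frac{1}{2+\gamma}$ and $(1+\gamma)\cdot\frac{1}{2+\gamma}= q_0/2$, the second factor is at most $(N_1(\mu)_B)^{q_0/2}$. Taking the $1/q_0$ power gives
\[
\Big(\fint_B|g\mu|^{q_0}\Big)^{1/q_0}\le N_1^{1/2}(\mu)_B^{1/2}\Big(\fint_B|g|^2\mu\Big)^{1/2}.
\]
Finally, $\fint_B|g|^2\mu = (\mu)_B\cdot\frac{1}{\mu(B)}\int_B|g|^2\mu$, so the right-hand side equals $N_1^{1/2}(\mu)_B\big(\frac{1}{\mu(B)}\int_B|g|^2\mu\big)^{1/2}$, which is (ii).

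For upper-half balls $B^+=B_r^+(x_0)$ with $x_0\in\mathbb{R}^{n-1}\times\{0\}$, the same H\"older steps apply verbatim; only the two weight facts need checking on $B^+$. Since $|B^+|=|B|/2$ and $B^+\subset B$, averages over $B^+$ are at most twice the averages over $B$. Conversely, $\mu(B^+)\ge N^{-1}\mu(B)$ by the $A_q$ comparison $\mu(S)/\mu(B)\ge [\mu]_{A_q}^{-1}(|S|/|B|)^q$. These two facts transfer both the $A_s$ bound and the reverse H\"older bound to $B^+$, with constants changed only by factors depending on $n,q,M_0$. The only delicate point is this bookkeeping of exponents and of the reduction to half-balls; the rest is routine.
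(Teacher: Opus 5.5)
Your proof is correct and follows the same route as the paper: H\"older's inequality against the $A_{q-\gamma}$ condition for (i), and against the reverse H\"older inequality with exponent $1+\gamma$ for (ii), both supplied by Lemma \ref{R-Holder}. Your extra bookkeeping fills in details the paper leaves implicit: shrinking $\gamma$ via monotonicity of the $A_s$ classes, and transferring the $A_s$ and reverse H\"older bounds to half-balls centered on $\{x_n=0\}$ via $|B^+|=|B|/2$ and $\mu(B^+)\ge N^{-1}\mu(B)$.
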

\begin{proof} By Lemma \ref{R-Holder}, there exists a sufficiently small constant $\gamma = \gamma(n, q, M_0) \in (0, q-1)$ such that 
\begin{equation}\label{mu-q-gamma-1}
[\mu]_{A_{q-\gamma}} \leq N_0 \quad \text{and} \quad \left( \fint_{B} \mu(x)^{1+\gamma} dx \right)^{\frac{1}{1+\gamma}} \leq N \fint_{B} \mu(x) dx.
\end{equation}
where $N_0 = N_0(n, q, M_0)\geq 1$. 

\smallskip
To prove (i), we write
\[
\fint_{B} |g(x)|^{\frac{p}{q-\gamma}} dx = \fint_{B} \bigl[|g(x)|^p\mu(x)\bigr]^{\frac{1}{q-\gamma}} \mu(x)^{-\frac{1}{q-\gamma}}dx,
\]
and apply H\"{o}lder's inequality together with \eqref{mu-q-gamma-1} to obtain
\begin{align*}
\left(\int_{B} |g(x)|^{\frac{p}{q-\gamma}} dx \right)^{\frac{q-\gamma}{p}}& \leq \left(\fint_{B} |g(x)|^p \mu(x) dx \right)^{\frac{1}{p}} \left(\fint_{B} \mu(x)^{-\frac{1}{q-\gamma-1}} dx \right)^{\frac{q- \gamma-1}{p}} \\
& \leq [\mu]_{A_{q-\gamma}}^{\frac{1}{p}}\left(\fint_B\mu(x)dx\right)^{-\frac{1}{p}} \left(\fint_{B} |g(x)|^p \mu(x) dx \right)^{\frac{1}{p}} \\
& \leq N\left(\frac{1}{\mu(B)}\int_{B} |g(x)|^p \mu(x) dx \right)^{\frac{1}{p}},
\end{align*}
where $N = N(n, q, p, M_0)\geq 1$. Hence, (i) is proved.

\smallskip 
We now prove (ii). By the H\"{o}lder inequality and Lemma \ref{R-Holder}, we have
\begin{align*}
\left( \fint_{B} |g(x) \mu(x)|^{q_0} \, dx\right)^{\frac{1}{q_0}} & \leq \left( \fint_{B} |g(x)|^2 \mu(x) \, dx \right)^{\frac{1}{2}} \left(\fint_{B} \mu(x)^{1+\gamma} \right)^{\frac{1}{2(1+\gamma)}} \\
&\leq N \left( \fint_{B} |g(x)|^2 \mu(x) \, dx \right)^{\frac{1}{2}} \left(\fint_{B} \mu(x) \right)^{\frac{1}{2}} \\
& = N \left( \frac{1}{\mu(B)}\int_{B} |g(x)|^2 \mu(x) \, dx \right)^{\frac{1}{2}} \left(\fint_{B} \mu(x) \right).
\end{align*}
Note also that the proof of the assertions for upper-half balls follow the same way. The proof of the lemma is completed.
\end{proof}
To introduce the weighted Sobolev-Poincar\'{e} inequality, let us denote $\eta \in C_c^\infty(B_1)$ be the standard cut-off function satisfying
\begin{equation} \label{eta-def}
\eta =1 \ \text{in} \ B_{4/5}, \ 0 \leq \eta \leq 1 \ \text{and} \  |\nabla \varphi| \leq N_0 \ \text{in} \ B_1, \ \text{and} \ \fint_{B_1} \eta(x)^2 dx =1.
\end{equation}
Here $N_0 = N_0 (n)>0$ is a fixed number.  For each $r >0$ and $x_0 \in \mathbb{R}^n$, let us denote
\[
\eta_{x_0, r}(x) = \eta((x- x_0)/r).
\]
We note that
\begin{equation} \label{integral-eta-r}
\int_{B_r(x_0)} \eta_{x_0, r}(x)^2 dx= |B_r(x_0)| \fint_{B_1}\eta(x)^2 dx =|B_r(x_0)|.
\end{equation}
Then, for any function $u$ defined on a neighborhood of $B_r(x_0)$, we denote its average on $B_r(x_0)$ with respect to the weight $\eta_{x_0, r}(x)^2$ that was introduced in \cite{GS}
\begin{equation} \label{average-eta-weight}
[u]_{x_0, r} =  \fint_{B_r(x_0)} u(x) \eta_{x_0, r}(x)^2 dx.
\end{equation}
For readers convenience, we recall that for a given non-negative locally finite Borel measure $\omega$ on $\mathbb{R}^n$ and an open non-empty bounded set $E \subset \mathbb{R}^n$, we write
\[
\fint_{E} f(x) d\omega(x)= \frac{1}{\omega(E)} \int_E f(x) d\omega(x) \quad \text{where} \quad \omega(E) = \int_E \omega(x) dx.
\]
In particular,
\[
\fint_{E} f(x) dx = \frac{1}{|E|} \int_{E}f(x) dx \quad \text{and} \quad \fint_{E} f(x) \beta(dx) = \frac{1}{\beta(E)}\int_{E}  f(x) \beta(x) dx.
\]
The following weighted Sobolev-Poincar\'{e} inequalities are needed in the paper.
\begin{lemma} \label{Sobolev-Poincare} For every $M_0 \geq 1$, there are a sufficiently small positive number $\gamma_0 = \gamma_0(n, M_0)$ such that for every $p \in [1, \frac{n}{n-1} +\gamma_0)$, and $N = N(n, M_0, p)>0$  so that for every $\beta \in A_2$ with $[\beta]_{A_2} \leq M_0$, the following assertions hold.
\begin{itemize}
\item[\textup{(i)}] For every $\rho \in (0,1]$, and for every $u \in C^1(B_r)$, we have
\[
\left(\fint_{B_r(x_0)} |u(x) - [u]_{0, r\rho}|^{2p} \beta(dx)  \right)^{\frac{1}{2p}}  \leq N\big(1+ \rho^{-n}\big) r \left(\fint_{B_r} |\nabla u(x)|^2 \beta(dx)\right)^{\frac{1}{2}}.
\]
\item[\textup{(ii)}] For every $u \in C^1(B_r^+)$ satisfying $u_{|B_r \cap (\mathbb{R}^{n-1} \times \{0\})}=0$, we have
\[
\left(\fint_{B_r^+} |u(x)|^{2p} \beta(dx)  \right)^{\frac{1}{2p}}  \leq Nr \left(\fint_{B_r^+} |\nabla u(x)|^2 \beta(dx)\right)^{\frac{1}{2}}.
\]
\end{itemize}
\end{lemma}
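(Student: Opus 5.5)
The plan is to reduce both assertions to the classical weighted Sobolev–Poincaré inequality of Fabes–Kenig–Serapioni for $A_2$ weights. We then use the comparison between averages over different balls and the cutoff-weighted average $[u]_{0,r\rho}$ to move from the natural mean to the one in the statement. By translation and scaling ($x\mapsto x_0+rx$, which preserves $[\beta]_{A_2}$), it suffices to take $x_0=0$ and $r=1$. We also take the statement's $[u]_{0,r\rho}$ to mean $[u]_{x_0,r\rho}$.

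\textbf{Step 1 (FKS inequality with extra gain).} The first tool is the result of \cite{Fabes}: for $\beta\in A_2$ with $[\beta]_{A_2}\le M_0$ there is $\gamma_0=\gamma_0(n,M_0)>0$ such that, for $1\le p<\frac{n}{n-1}+\gamma_0$,
\[
\Big(\fint_{B_1}|u-(u)_{B_1,\beta}|^{2p}\,\beta(dx)\Big)^{\frac1{2p}}\le N\Big(\fint_{B_1}|\nabla u|^2\,\beta(dx)\Big)^{\frac12},
\]
where $(u)_{B_1,\beta}$ is the $\beta$-average. To obtain the exponent range I would argue as follows.
\begin{itemize}
\item Write $|u(x)-u_{B}|\le N\, I_1(|\nabla u|\chi_B)(x)$, where $I_1$ is the Riesz potential of order one.
\item Apply the two-weight estimate $I_1: L^2(\beta)\to L^{2p}(\beta)$ on balls. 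It holds once $\beta(B_s)/\beta(B_1)\gtrsim s^{n+\epsilon'}$ with $\epsilon'<n$. This follows from Lemma \ref{R-Holder}(ii): $\beta\in A_{2-\gamma}$ gives $\beta(B_s)/\beta(B_1)\ge N^{-1}s^{n(2-\gamma)}$.
\item The balance condition for the gain $2p$ is then $\frac1{2p}\ge\frac12-\frac1{n(2-\gamma)}$, which yields $p<\frac{n(2-\gamma)}{n(2-\gamma)-2}$. This is strictly larger than $\frac{n}{n-1}$ only if $n(2-\gamma)<2n$ improves the exponent compared with doubling dimension $2n$. I would check this carefully: it gives exactly the threshold $\frac{n}{n-1}+\gamma_0$ with $\gamma_0$ depending on $\gamma$.
\end{itemize}
Assertion (ii) then follows by extending $u$ by odd reflection across $\{x_n=0\}$ and $\beta$ by even reflection. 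The even reflection keeps $\beta$ in $A_2$ with a comparable constant, and the odd extension of $u$ is $W^{1,1}$ because of the zero trace. Apply (i) on $B_r$, and note that the mean of the odd extension against the even weight is zero, so no constant needs to be subtracted.

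\textbf{Step 2 (changing the average).} For (i), write $u-[u]_{0,\rho}=(u-c)+(c-[u]_{0,\rho})$ with $c=(u)_{B_1,\beta}$. Since $\eta_{0,\rho}^2$ has total mass $|B_\rho|$ by \eqref{integral-eta-r} and is supported in $B_\rho$, Hölder's inequality gives
\[
|c-[u]_{0,\rho}|\le \fint_{B_\rho}|u-c|\,dx\le \rho^{-n}\fint_{B_1}|u-c|\,dx .
\]
By Lemma \ref{L-q-2-wei}(i) with $q=2$ and exponent $p'=2$, we have $\fint_{B_1}|u-c|\,dx\le N(\fint_{B_1}|u-c|^2\beta(dx))^{1/2}$, which Step 1 controls. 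This produces the factor $N(1+\rho^{-n})$.

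\textbf{Main obstacle.} The main difficulty is Step 1: obtaining the gain $2p$ with $p$ slightly above $\frac n{n-1}$, uniformly in $[\beta]_{A_2}\le M_0$. If the potential-balance computation gives only $p<\frac{n}{n-1}$ in general, I would fall back on the self-improvement of \cite{Fabes} (Theorem 1.3 there), where the gain exponent is $\frac{n}{n-1}+\delta$ with $\delta=\delta(n,M_0)$. I would cite it directly, combined with a smooth approximation to handle $u\in C^1$.
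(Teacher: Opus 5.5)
Your proposal is correct and follows the paper's proof. Both cite the Fabes--Kenig--Serapioni weighted Sobolev--Poincar\'e inequality \cite{Fabes} for the gain $2p$ with $p<\frac{n}{n-1}+\gamma_0$. Then, after normalizing $x_0=0$, $r=1$, both pass from the natural mean $c$ to $[u]_{0,\rho}$ via $|c-[u]_{0,\rho}|\le \rho^{-n}\fint_{B_1}|u-c|\,dx$ and the $A_2$ condition, which gives the factor $N(1+\rho^{-n})$. The differences are cosmetic. The paper bounds the unweighted $L^1$ term by Cauchy--Schwarz against $\beta^{-1}$ directly rather than through Lemma \ref{L-q-2-wei}(i). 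It also quotes \cite[Theorem 1.6]{Fabes} for (ii) instead of your reflection argument, so your Riesz-potential derivation in Step 1 is not needed.
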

\begin{proof}  Note that the assertion (ii) is due to \cite[Theorem 1.6]{Fabes}. Hence, we only need to prove (i). Using the dilation, it is sufficient to assume that $r =1$.  Note also that from \cite[Theorem 1.5]{Fabes}, there are $N = N(n, M_0, p)$ and $\gamma = \gamma(n, M_0) \in (0,1)$ such that
\begin{equation} \label{Poincare-1.eqn}
\left(\fint_{B_r} |u(x) - \bar{u}|^{2p} \beta(dx)  \right)^{\frac{1}{2p}}  \leq N r \left(\fint_{B_r} |\nabla u(x)|^2 \beta(dx)\right)^{\frac{1}{2}}
\end{equation}
where $\bar{u}$ is either 
\[
\fint_{B_r} u(x) dx \quad \text{or} \quad  \fint_{B_r} u(x) \beta(dx).
\]
Let us recall that
\[
(u)_{B_1} = \frac{1}{|B_1|}\int_{B_1} u(x) dx \quad \text{and}\quad [u]_{x, \rho} =\fint_{B_\rho} u(x) \eta_{0,\rho}(x)^2 dx.
\]
Then, it follows from triangle inequality that
\begin{align*}
& \left(\fint_{B_1}|u(x) -[u]_{x_0, \rho}|^{2p} \beta(dx)\right)^{1/2p} \\
& \leq \left(\fint_{B_1} |u(x) - (u)_{B_1}|^{2p} \beta(dx) \right)^{1/p} + \left(\fint_{B_1} |(u)_{B_1} - [u]_{x_0, \rho}|^{2p} \beta(dx)\right)^{1/2p}\\
& =\left(\fint_{B_1} |u(x) - (u)_{B_1}|^{2p} \beta(dx) \right)^{1/2p} +  |(u)_{B_1} - [u]_{x_0, \rho}| \\
& \leq N \left(\fint_{B_1}|\nabla u(x)|^2 \beta(dx) \right)^{1/2} +  |(u)_{B_1} - [u]_{x_0, \rho}|,
\end{align*}
where in the last step we used the first assertion of the lemma. It then remains to control the last term in the right hand side of the last estimate. It follows from the definition of $[u]_{0,\rho}$, \eqref{integral-eta-r}, and H\"{o}lder inequality
\begin{align*}
|(u)_{B_1} - [u]_{x_0, \rho}| & = \left| (u)_{B_1} -  \frac{1}{|B_\rho|} \int_{B_1} u(x) \eta_{0, \rho}(x)^2 dx \right| \\
& \leq  \frac{1}{|B_\rho|}\int_{B_1} | u(x) - (u)_{B_1}| \eta_{0,\rho}(x)^2 dx  \\
&=  \frac{1}{|B_\rho|}\int_{B_1} | u(x) - (u)_{B_1}| \beta(x)^{1/2} \eta_{0,\rho}(x)^2 \beta(x)^{-1/2} dx\\
& \leq \frac{1}{|B_\rho|} \|u - (u)_{B_1}\|_{L^{2}(B_1, \beta)} \left( \int_{B_1} \eta_{0,\rho}(x)^{4} \beta(x)^{-1} dx \right)^{\frac{1}{2}} \\
& \leq \frac{N}{\rho^n} \left(\fint_{B_1} |\nabla u(x)^2 \beta(dx) \right)^{\frac{1}{2}} \left(\fint_{B_1} \beta(x)dx \right)^{\frac{1}{2}}  \left( \fint_{B_1} \beta(x)^{-1} dx \right)^{\frac{1}{2}} \\
& \leq \frac{N M_0}{\rho^n}\left(\fint_{B_1} |\nabla u(x)^2 \beta(dx) \right)^{\frac{1}{2}}.
\end{align*}
Note that in the previous steps, we also used the first assertion of the lemma, and the fact that $0 \leq \eta_{0,\rho} \leq 1$. The proof of the lemma is completed.
\end{proof}

The following embedding result is essential in the paper, and its proof requires $\beta \in A_{1+\frac{1}{n_0}}$ with $n_0 = \max\{n-1, 1\}$.
\begin{lemma} \label{imbed-wei-0309} For every $M_0 \geq 1$, there are a sufficiently small number $\kappa = \kappa (n, M_0) \in (0,1)$ and a number $N= N(n, M_0)>0$ such that for every $\beta \in A_{1+ \frac{1}{n_0}}$ satisfying $[\beta]_{A_{1+\frac{1}{n_0}}} \leq M_0$ for $n_0 = \max\{1, n-1\}$, the following holds.
\begin{itemize}
\item[\textup{(i)}] For all $u \in W^{1,2}(B_r, \beta)$ and $\rho \in (0,1]$, we have
\[
\left(\fint_{B_r}|u(x) -[u]_{0, \rho r}|^{2+\kappa} dx\right)^{\frac{1}{2+\kappa}} \leq  rN(1+\rho^{-n}) \left(\fint_{B_r}|\nabla u(x)|^2 \beta(dx)\right)^{\frac{1}{2}}.
\]
\item[\textup{(ii)}] For all $u \in W^{1,2}(B_r^+, \beta)$ such that $u_{|B_r \cap (\mathbb{R}^{n-1} \times \{0\})}$ in sense of trace, we have
\[
\left(\fint_{B_r^+}|u(x)|^{2+\kappa} dx\right)^{\frac{1}{2+\kappa}} \leq  rN \left(\fint_{B_r^+}|\nabla u(x)|^2 \beta(dx)\right)^{\frac{1}{2}}.
\]
\end{itemize}
\end{lemma}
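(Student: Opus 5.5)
The idea is to pass from the weighted gradient to an \emph{unweighted} gradient in $L^s$ with $s<2$. We then apply the classical (unweighted) Sobolev--Poincar\'e inequality and check that the Sobolev exponent $s^*$ still exceeds $2$. The hypothesis $\beta\in A_{1+\frac1{n_0}}$ is exactly what makes this exponent bookkeeping work. By dilation it suffices to take $r=1$, and by density of smooth functions in $W^{1,2}(B_1,\beta)$ (respectively of smooth functions vanishing near the flat part in the trace-zero subspace) it suffices to treat smooth $u$.

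\textbf{Step 1 (weighted-to-unweighted gradient).} Recall that $\bar\beta=\beta^{-n_0}\in A_{n_0+1}$ with constant controlled by $M_0$. By Lemma \ref{R-Holder}(i) there are $\gamma=\gamma(n,M_0)>0$ and $N$ with
\[
\Big(\fint_{B_1}\beta^{-n_0(1+\gamma)}dx\Big)^{\frac{1}{n_0(1+\gamma)}}\le N\Big(\fint_{B_1}\beta^{-n_0}dx\Big)^{\frac1{n_0}}.
\]
Choose $s\in(1,2)$ by $\frac{s}{2-s}=n_0(1+\gamma)$, that is, $s=\frac{2n_0(1+\gamma)}{1+n_0(1+\gamma)}$. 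H\"older's inequality with exponents $\frac2s$ and $\frac{2}{2-s}$ then gives
\[
\Big(\fint_{B_1}|\nabla u|^s dx\Big)^{\frac1s}\le \Big(\fint_{B_1}|\nabla u|^2\beta\,dx\Big)^{\frac12}\Big(\fint_{B_1}\beta^{-n_0(1+\gamma)}dx\Big)^{\frac{1}{2n_0(1+\gamma)}} .
\]
Next rewrite $\fint|\nabla u|^2\beta\,dx=(\beta)_{B_1}\fint|\nabla u|^2\beta(dx)$ and use reverse H\"older together with the $A_{1+\frac1{n_0}}$ condition $(\beta)_{B_1}(\fint\beta^{-n_0})^{1/n_0}\le M_0$. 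This yields
\[
\Big(\fint_{B_1}|\nabla u|^s dx\Big)^{\frac1s}\le N M_0^{1/2}\Big(\fint_{B_1}|\nabla u|^2\beta(dx)\Big)^{\frac12}.
\]
The same computation works on $B_1^+$, using the half-ball version of reverse H\"older; this is legitimate because $B_1^+$ is comparable to $B_1$ and $\beta$ is doubling.

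\textbf{Step 2 (exponent check; the main obstacle).} We need $s^*=\frac{ns}{n-s}>2$ when $n\ge 3$, $s^*$ arbitrarily large or infinite when $n\le 2$, and otherwise $W^{1,s}\hookrightarrow L^\infty$ when $n=1$.
\begin{itemize}
\item For $n\ge3$, at $\gamma=0$ we get $s=\frac{2(n-1)}{n}$ and $s^*=\frac{2n(n-1)}{n^2-2n+2}$. This exceeds $2$ if and only if $n>2$, so it holds, and taking $\gamma>0$ only increases $s^*$.
\item For $n=2$ we have $n_0=1$, and the $\gamma=0$ value is exactly the borderline $s^*=2$. Here the reverse H\"older gain is essential: $s=\frac{2(1+\gamma)}{2+\gamma}$ gives $s^*=2(1+\gamma)>2$.
\item For $n=1$ we have $s>1$, so the embedding into $L^\infty$ applies.
\end{itemize}
In every case we fix $\kappa\in(0,1)$ with $2+\kappa\le s^*$. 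This borderline case, and the careful tracking of $\gamma$ in it, is where I expect the main difficulty. It is also the reason $n_0=\max\{n-1,1\}$ appears in the hypothesis.

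\textbf{Step 3 (conclusion).} The classical Sobolev--Poincar\'e inequality gives
\[
\Big(\fint_{B_1}|u-(u)_{B_1}|^{2+\kappa}\Big)^{\frac1{2+\kappa}}\le N\Big(\fint_{B_1}|\nabla u|^s\Big)^{\frac1s}.
\]
To replace $(u)_{B_1}$ by $[u]_{0,\rho}$, we argue exactly as in the proof of Lemma \ref{Sobolev-Poincare}. Since $0\le\eta_{0,\rho}\le1$ and \eqref{integral-eta-r} holds,
\[
|(u)_{B_1}-[u]_{0,\rho}|\le \frac{1}{|B_\rho|}\int_{B_1}|u-(u)_{B_1}|\,dx\le N\rho^{-n}\Big(\fint_{B_1}|u-(u)_{B_1}|^{2+\kappa}\Big)^{\frac1{2+\kappa}}.
\]
Combining this with Step 1 and the triangle inequality gives (i) with the factor $N(1+\rho^{-n})$. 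For (ii) we instead use the unweighted Sobolev--Poincar\'e inequality on $B_1^+$ for functions with zero trace on $T_1$. This follows, for instance, by odd reflection across $\{x_n=0\}$ followed by the ball version, or directly from the zero-trace Poincar\'e inequality. Combined with the half-ball version of Step 1, this gives (ii) without any averaging term.
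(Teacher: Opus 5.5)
Your argument is correct, but it runs in the opposite order from the paper's.

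\textbf{The paper's route.} The paper stays in the weighted scale for $u$ first. It applies the Fabes--Kenig--Serapioni inequality, Lemma \ref{Sobolev-Poincare}(i), which already contains the $[u]_{0,\rho}$ correction and the factor $1+\rho^{-n}$. This gives $u-[u]_{0,\rho}\in L^{2p}(B_1,\beta)$ with $p$ slightly above $\frac{n}{n-1}$. Only then does it drop the weight, via Lemma \ref{L-q-2-wei}(i) with $q=1+\frac{1}{n-1}$. That step is H\"older against a negative power of $\beta$, using the openness $A_q\subset A_{q-\gamma}$, and it lands at the exponent $\frac{2p}{q-\gamma}=2+\kappa$.

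\textbf{Your route.} You drop the weight on the gradient first, by H\"older plus reverse H\"older for $\beta^{-n_0}\in A_{n_0+1}$. You then use only the classical Sobolev--Poincar\'e inequality and handle $[u]_{0,\rho}$ trivially at the unweighted level.

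\textbf{What each route buys.} Yours is more elementary, since no weighted Sobolev theorem is needed. The same H\"older estimate shows directly that $W^{1,2}(B_1,\beta)\subset W^{1,s}(B_1)$, so the density reduction is not actually needed.

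Your route also locates the tightness of the hypothesis better. In the paper's bookkeeping, $\frac{2p}{q}=2$ exactly at the unperturbed exponents for every $n\ge 2$, so both small gains $\gamma_0$ and $\gamma$ are essential. In yours, the reverse H\"older gain is needed only for $n=2$. Indeed, for $\beta\in A_q$ your H\"older step gives $s=\frac{2}{q}$. For $n\ge 3$ this yields $s^*>2$ if and only if $q<1+\frac{2}{n}$, which is a larger class than $A_{1+\frac{1}{n-1}}$, at least as far as this lemma is concerned.

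What the paper's route buys is economy: Lemmas \ref{Sobolev-Poincare} and \ref{L-q-2-wei} are needed elsewhere anyway, for example in Lemmas \ref{L-2-inter-u-est} and \ref{L2-comparision}.

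\textbf{One small slip.} The opening sentence of your Step 2 asks for $s^*$ ``arbitrarily large or infinite when $n\le 2$''. This is false for $n=2$, where $s^*=2(1+\gamma)$. Your bullet computation gives the correct requirement.
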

\begin{proof} We only provide the proof of (i) and that of (ii) is similar using Lemma \ref{Sobolev-Poincare}-(ii) instead of Lemma \ref{Sobolev-Poincare}-(i). It is sufficient to prove the lemma with $r=1$ and $u \in C^1(B_1)$. We only consider the case $n \geq 2$ as the case $n=1$ the proof is simpler. Note that when $n \geq 2$, we have $n_0 = n-1$. Also $\beta \in A_{1+\frac{1}{n-1}} \subset A_2$, it follows from Lemma \ref{Sobolev-Poincare}-(i) that there is $\gamma_0 = \gamma_0(n, M_0) \in (0,1)$ sufficiently small that
\[
\left(\fint_{B_1}|u(x)-[u]_{0,\rho}|^{2p} \beta(dx) \right)^{\frac{1}{2p}} \leq N(1+ \rho^{-n})\left(\fint_{B_1} |\nabla u(x)|^2 \beta(dx) \right)^{\frac{1}{2}},
\]
for $N = N(n, M_0)>0$ and for $p = 1+ \frac{1}{n-1} + \gamma_0$. On the other hand, by using Lemma \ref{L-q-2-wei}-(i) with $q = 1+ \frac{1}{n-1}$, we can find a sufficiently small number $\gamma = \gamma(n, M_0)\in (0,1)$ that
\[
\left(\fint_{B_1}|u(x)- [u]_{0,\rho}|^{\frac{2p}{q-\gamma}}  dx \right)^{\frac{1}{2p}} \leq  N \left(\fint_{B_1}|u(x) - [u]_{0,\rho}|^{2p} \beta(dx) \right)^{\frac{1}{2p}}.
\]
Note that with  $q = 1+ \frac{1}{n-1}$, and $p = 1+\frac{1}{n-1} + \gamma_0$, we can find some sufficiently small number $\kappa = \kappa (n, M_0) \in (0,1)$ so that
\[  \frac{2p}{q-\gamma} = \frac{2(q+\gamma_0)}{q-\gamma} = 2 + \kappa. \]
The proof of the lemma is completed.
\end{proof}
As a corollary of Lemma \ref{imbed-wei-0309}, we have the following interpolation inequality showing that $W^{1,2}(\Omega, \beta) \subset L^2(\Omega)$ when $\beta \in A_{1+\frac{1}{n_0}}$.
\begin{lemma} \label{imbed-epsilon-0309} For every $M_0 \geq 1$, there is a number $N= N(n, M_0)>0$ such that for every $\beta \in A_{1+ \frac{1}{n_0}}$ satisfying $[\beta]_{A_{1+\frac{1}{n_0}}} \leq M_0$ with $n_0 = \max\{1, n-1\}$, we have the followings
\begin{itemize}
\item[\textup{(i)}] For every $u \in W^{1,2}(B_r, \beta)$ with some $r >0$, and for $\rho \in (0,1]$, it holds that
\begin{align*}
& \left(\fint_{B_r}|u(x) - [u]_{0, \rho r}|^{2} dx\right)^{\frac{1}{2}} \\
& \leq  N r (1+\rho^{-n}) \left(\fint_{B_r}|u(x) - [u]_{0,\rho r}|^2 \beta(dx)\right)^{1/2} \left( \fint_{B_r}|\nabla u(x)|^2 \beta(dx) \right)^{\frac{1}{2}}.
\end{align*}
\item[\textup{(ii)}] For every $r>0$ and $u \in W^{1,2}(B_r^+, \beta)$ satisfying $u_{|B_r \cap (\mathbb{R}^{n-1} \times \{0\})} =0$ in sense of trace, it holds that
\begin{equation*}
\left(\fint_{B_r^+}|u(x)|^{2} dx\right)^{\frac{1}{2}}  \leq  N r \left(\fint_{B_r^+}|u(x)|^2 \beta(dx)\right)^{1/2} \left( \fint_{B_r^+}|\nabla u(x)|^2 \beta(dx) \right)^{\frac{1}{2}}.
\end{equation*}
\end{itemize}
\end{lemma}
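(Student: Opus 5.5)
Since $[u]_{0,\rho r}$ is a constant, both sides behave differently under $u\mapsto \lambda u$. The left side is homogeneous of degree one and the right side of degree two, so the inequality is only meaningful with the left side squared. I will therefore prove
\[
\fint_{B_r}|v|^2\,dx \le N r(1+\rho^{-n})\Big(\fint_{B_r}|v|^2\beta(dx)\Big)^{1/2}\Big(\fint_{B_r}|\nabla u|^2\beta(dx)\Big)^{1/2},\qquad v=u-[u]_{0,\rho r},
\]
and treat (ii) the same way with $v=u$. By dilation I take $r=1$, and by density $u\in C^1(\overline{B_1})$.

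\textbf{Step 1: Cauchy--Schwarz splitting.} Writing $|v|^2=(|v|\beta^{1/2})(|v|\beta^{-1/2})$ gives
\[
\fint_{B_1}|v|^2dx\le \Big(\fint_{B_1}|v|^2\beta\,dx\Big)^{1/2}\Big(\fint_{B_1}|v|^2\beta^{-1}dx\Big)^{1/2}.
\]
Note that $\fint|v|^2\beta\,dx=(\beta)_{B_1}\fint|v|^2\beta(dx)$. The Muckenhoupt normalization $(\beta)_{B_1}(\beta^{-1})_{B_1}\le N(M_0)$ holds because $\beta\in A_{1+1/n_0}\subset A_2$. So everything reduces to the key estimate
\[
\fint_{B_1}|v|^2\beta^{-1}dx\le N(1+\rho^{-n})^2\,(\beta^{-1})_{B_1}^2\,(\beta)_{B_1}\fint_{B_1}|\nabla u|^2\beta(dx). \tag{$\ast$}
\]
Inserting $(\ast)$ into Step 1 leaves the factor
\[
(\beta)_{B_1}^{1/2}\,(\beta^{-1})_{B_1}\,(\beta)_{B_1}^{1/2}=(\beta)_{B_1}(\beta^{-1})_{B_1}\le N(M_0),
\]
which yields the claim.

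\textbf{Step 2: reduction of $(\ast)$ to the mean $(u)_{B_1}$.} First I replace $[u]_{0,\rho}$ by $(u)_{B_1}$. Exactly as in the proof of Lemma \ref{Sobolev-Poincare}, one has
\[
|(u)_{B_1}-[u]_{0,\rho}|\le N\rho^{-n}\Big(\fint_{B_1}|\nabla u|^2\beta(dx)\Big)^{1/2}.
\]
Multiplying its square by $(\beta^{-1})_{B_1}$ produces a term bounded by the right side of $(\ast)$, again using $(\beta)(\beta^{-1})\ge 1$.

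\textbf{Step 3: the two-weight Poincar\'e inequality (main obstacle).} For $w=u-(u)_{B_1}$ I use the pointwise bound
\[
|w(x)|\le N\int_{B_1}|\nabla u(y)|\,|x-y|^{1-n}dy .
\]
This reduces the remaining part of $(\ast)$ to a two-weight $L^2(\beta)\to L^2(\beta^{-1})$ bound for the localized fractional integral $I_1$. I would invoke the P\'erez--Sawyer bump condition for $I_1$: for all balls $B\subset B_1$ of radius $s$,
\[
s\Big(\fint_B\beta^{-(1+\varepsilon)}\Big)^{\frac{1}{2(1+\varepsilon)}}\Big(\fint_B\beta^{-(1+\varepsilon)}\Big)^{\frac{1}{2(1+\varepsilon)}}\lesssim (\beta^{-1})_{B_1}\,(\beta)_{B_1}^{1/2}\cdot(\text{const}).
\]
Both bumps concern the dual weight $\beta^{-1}$. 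They are controlled by the reverse H\"older inequality, Lemma \ref{R-Holder}, applied to $\bar\beta=\beta^{-n_0}\in A_{n_0+1}$; here $n_0\ge1$ guarantees that $\beta^{-1-\varepsilon}$ is locally integrable with $\big(\fint\beta^{-1-\varepsilon}\big)^{1/(1+\varepsilon)}\le N\fint\beta^{-1}$. The doubling property, Lemma \ref{property}, then lets the small-ball factors be summed dyadically. The factor $s\le1$ combined with $s^{n}$-decay of ball averages, coming from $\beta^{-n_0}\in A_{n_0+1}$, is exactly where $\beta\in A_{1+1/n_0}$ rather than merely $A_2$ is used.

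This step is the delicate one. If the bump machinery is too heavy, my fallback is a dyadic (Hedberg-type) decomposition of $I_1$. I would bound each annulus by Cauchy--Schwarz against $\beta^{\pm1}$ and sum the pieces using Lemma \ref{property}(ii).

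\textbf{Part (ii).} The same argument applies with $v=u$, without Step 2. The pointwise representation uses the odd reflection across $\{x_n=0\}$ permitted by the zero trace, together with Lemma \ref{Sobolev-Poincare}(ii) for the half-ball normalization.
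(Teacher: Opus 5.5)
\textbf{There is a genuine gap: the repaired inequality you set out to prove is false for $n\ge 2$.} You are right that the inequality as printed is not homogeneous in $u$. The paper's proof has the same defect: it reaches $N(1+\rho^{-n})^{\theta}X^{1-\theta}Y^{\theta}$, with $X^2=\fint_{B_1}|w|^2\beta(dx)$ and $Y^2=\fint_{B_1}|\nabla w|^2\beta(dx)$, and then passes to $XY$ without justification. But squaring the left side does not repair it. Your squared inequality, and in particular the two-weight estimate $(\ast)$ of Step 3, fails for every $n\ge 2$.

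Take $n=2$, so $n_0=1$ and the class is $A_2$. Let $\beta(x)=|x|^{3/2}\in A_2(\mathbb{R}^2)$, with $M_0$ its $A_2$ constant, and $r=\rho=1$. Let $u_\epsilon(x)=\phi(x/\epsilon)$ with $\phi\in C_c^\infty(B_1)$, $0\le\phi\le1$, $\phi=1$ on $B_{1/2}$. Then $[u_\epsilon]_{0,1}=O(\epsilon^2)$, and for $v_\epsilon=u_\epsilon-[u_\epsilon]_{0,1}$,
\[
\fint_{B_1}|v_\epsilon|^2\,dx\gtrsim\epsilon^{2},\qquad \fint_{B_1}|v_\epsilon|^2\,\beta(dx)\lesssim\epsilon^{7/2},\qquad \fint_{B_1}|\nabla u_\epsilon|^2\,\beta(dx)\lesssim\epsilon^{3/2}.
\]
So your right-hand side is $O(\epsilon^{5/2})$ against a left-hand side of order $\epsilon^{2}$. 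Likewise, the left side of $(\ast)$ is $\gtrsim\int_{B_{\epsilon/2}}|x|^{-3/2}dx\sim\epsilon^{1/2}$, while its right side is $O(\epsilon^{3/2})$.

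More generally, $\beta=|x|^{\alpha}$ with $1<\alpha<n/(n-1)$ lies in $A_{1+1/n_0}$ and gives $\epsilon^{n}$ against $\epsilon^{n+\alpha-1}$. The same bump, with weight and bump centred at an interior point of $B_1^+$, defeats (ii).

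This is exactly where your bump argument breaks. On balls $B_s$ centred at the degeneracy, $s\,(\beta^{-1})_{B_s}\sim s^{1-\alpha}$ is unbounded as $s\to0$. Nothing in the reverse H\"older or doubling properties of $\beta^{-n_0}\in A_{n_0+1}$ can absorb this, because $A_{1+1/n_0}$ admits $\alpha>1$. The dyadic fallback cannot help either, since the target inequality itself is false.

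What is true, and what the paper's computation actually proves before its last line, is an interpolation inequality with an exponent $\theta$ close to $1$, not $1/2$. With $w=u-[u]_{0,\rho r}$, apply H\"older's inequality between $L^{p_0}$ (controlled by $X$ via Lemma \ref{L-q-2-wei}(i)) and $L^{2+\kappa}$ (controlled by $Y$ via Lemma \ref{imbed-wei-0309}(i)). This gives
\[
\Big(\fint_{B_r}|w|^2\,dx\Big)^{\frac12}\le N\big(r(1+\rho^{-n})\big)^{\theta}\Big(\fint_{B_r}|w|^2\,\beta(dx)\Big)^{\frac{1-\theta}{2}}\Big(\fint_{B_r}|\nabla u|^2\,\beta(dx)\Big)^{\frac{\theta}{2}},
\]
where $\theta\in(0,1)$ is determined by $\frac12=\frac{1-\theta}{p_0}+\frac{\theta}{2+\kappa}$. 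The example shows that any inequality of this form forces $\theta\ge\alpha/2$, so your choice $\theta=1/2$ is ruled out.

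This $\theta$-version, or its consequence via Young's inequality, is all that is needed where the lemma is used in the proof of Proposition \ref{L2-gradient-comparision}. There the bound still tends to zero with $\bar{\epsilon}$. So the right route is the two one-sided embeddings plus H\"older, not a two-weight $L^2(\beta)\to L^2(\beta^{-1})$ Poincar\'e inequality.
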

\begin{proof}  We only provide the proof of (i) as that of (ii) is similar using Lemma \ref{L-q-2-wei}-(ii) and Lemma \ref{imbed-wei-0309}-(ii). It is sufficient to assume that $r=1$. Also, we only consider the case $n \geq 2$ and skip the case $n=1$ because the proof is similar. Note that as $n \geq 2$, $n_0 = n-1$. We write $w(x) = u(x) - [u]_{0, \rho}$. As $\beta \in A_{1+\frac{1}{n-1}} \subset A_2$, it follows from Lemma \ref{L-q-2-wei}-(i) that there is $p_0 = p_0(n, M_0) \in (1,2)$ sufficiently closed to $1$ such that
\begin{equation} \label{ineq-0309}
\left(\fint_{B_1} |w(x)|^{p_0} dx \right)^{\frac{1}{p_0}} \leq N \left(\fint_{B_1} |w(x)|^{2} \beta(dx) \right)^{\frac{1}{2}}
\end{equation}
for $N= N(n, M_0)>0$. Now, let $\kappa = \kappa (n, M_0) \in (0, 1)$ be the number defined in Lemma \ref{imbed-wei-0309}-(i). It follows from H\"{o}lder inequality, \eqref{ineq-0309}, and Lemma \ref{imbed-wei-0309} that
\begin{align*}
& \left(\fint_{B_1} |w(x)|^{2} dx \right)^{\frac{1}{2}} \leq \left(\fint_{B_1} |w(x)|^{p_0} dx \right)^{\frac{1-\theta}{p_0}} \left(\fint_{B_1} |w(x)|^{2+\kappa} dx \right)^{\frac{\theta}{2+\kappa}} \\
& \leq N (1+\rho^{-n})^{\theta} \left(\fint_{B_1} |w(x)|^{2} \beta(dx) \right)^{\frac{1-\theta}{2}}  \left(\fint_{B_1}|\nabla w(x)|^2 \beta(dx)\right)^{\frac{\theta}{2}} \\
& \leq N (1+\rho^{-n}) \left(\fint_{B_1} |w(x)|^{2} \beta(dx) \right)^{\frac{1}{2}}  \left(\fint_{B_1}|\nabla w(x)|^2 \beta(dx)\right)^{\frac{1}{2}}
\end{align*}
for some $\theta = \theta(p_0, \kappa) \in (0,1)$. From this, and by using H\"{o}lder inequality and Young inequality, we obtain the assertion in the lemma. The proof is then completed.
\end{proof}
\subsection{Weighted parabolic cylinders}  \label{cylinder-distance-sec} Let us discuss more about the class of weighed parabolic cylinders introduced in \eqref{cylinder-introc}. Let $M_0 \geq 1$ be a fixed number and let $\beta: \mathbb{R}^n \rightarrow [0, \infty)$ be a weight satisfying \eqref{beta-cond}:
\begin{equation*} 
\beta \in A_{1+\frac{1}{n_0}} \quad \text{and} \quad [\beta]_{A_{1+\frac{1}{n_0}}} \leq M_0  \quad \text{for}\quad  n_0 =\max\{n-1,1\}.
\end{equation*}
For given $z_0 = (x_0, t_0) \in \R^{n}\times \R$ and for $r>0$, the \emph{weighted parabolic cylinder} of radius $r$ centered at $z_0$ is defined by
\begin{equation} \label{cylinder-def}
Q_{r, \beta}(z_0) =B_r(x_0)  \times \Gamma_{\beta}(z_0, r), \quad \text{where}   \quad \Gamma_{\beta} (z_0, r) =(t_0 -r^2\E_\beta(x_0, r), t_0]
\end{equation}
and in which
\begin{equation} \label{E-beta-def}
\mathcal{E}_\beta(x_0, r) = \left(\fint_{B_r(x_0)} \beta(x)^{-n_0} dx \right)^{\frac{1}{n_0}}, \quad n_0 = \max\{n-1, 1\}.
\end{equation}
It follows directly from \eqref{E-beta-def} that
\begin{equation}\label{heights}
r^2 \E_{\beta}(x_0, r) =\left\{
\begin{array}{cl}
   \frac{1}{2} r\, \bar{\beta}(B_r(x_0)) & \text{if}\quad n = 1,\\[6pt]
 \sigma_n^{-\frac{1}{n-1}} r^{\frac{n-2}{n-1}} \left[\bar{\beta}(B_r(x_0))\right]^{\frac{1}{n-1}} & \text{if}\quad n\geq 2.
\end{array}\right.
\end{equation}
Here, $\sigma_n$ denotes the Lebesgue measure of the unit ball $B_1\subset \R^n$, and
\begin{equation} \label{t-height}
\bar{\beta}(x) = [\beta(x)]^{-n_0}, \quad x \in \mathbb{R}^n.
\end{equation}
Then, the following inclusion property holds
\[
Q_{r_1,\, \beta}(z_0) \subset Q_{r_2,\, \beta}(z_0), \quad \text{for\ } \, 0<r_1 \leq r_2, \  \forall\, z_0 \in \mathbb{R}^{n+1}.
\]

\smallskip
We also denote the upper-half ball in $\mathbb{R}^n$ by
\[
B_r^+(x_0) = \left\{x = (x', x_n) \in \mathbb{R}^{n-1} \times (0, \infty): x \in B_r(x_0)\right\},
\]
and 
its flat boundary portion by
\begin{equation} \label{flat-bdr}
T_r(x_0) =  B_r(x_0) \cap \big(\mathbb{R}^{n-1} \times \{0\}\big).
\end{equation}
In addition, the upper-half parabolic weighted cylinder of radius $r$ centered at $z_0=(x_0, t_0)$ is denoted by
\begin{equation}\label{half cylinder}
Q_{r, \beta}^+(z_0) = B_r^+(x_0) \times \Gamma_{\beta}(z_0, r).
\end{equation}
Note also that when $z_0 =(0, 0)$ and $x_0 = 0$, we omit $z_0$ and $x_0$ in the notation. For example, we write
\[
 Q_{r, \beta} = Q_{r, \beta}(0,0), \  B_r = B_r(0),\  T_r = T_r(0), \ \E_\beta(r) = \E_{\beta}(0, r), \  \Gamma_{\beta}(r) = \Gamma_{\beta}(0,0, r).
\]
The same definitions can be defined for $Q_{r, \beta}^+$ and $B_r^+$.

\smallskip
It turned out that the weighted parabolic cyllinders can be derived from a quasi-distance function.  Indeed,  for a given $z_0=(x_0,t_0)\in\R^{n}\times \R$, we let $h_{x_0}: [0,\infty)\rightarrow [0,\infty)$ be defined by
\begin{equation*}
h_{x_0}(r)=\left\{
 \begin{array}{cl}
 0 \quad &\text{if}\quad r=0,\\[4pt]
r^2\E_{\beta}(x_0, r) \quad &\text{if}\quad r\in(0,\infty),\\
 \end{array}\right.
\end{equation*}
where $\E_{\beta, x_0}$ is given in \eqref{t-height}.  From  \eqref{heights}, it follows that $h_{x_0}(r)$ is strictly increasing in $r$. Hence, its inverse function, $h^{-1}_{x_0}$, exists.  

\smallskip
Now, for any $z=(x,t)\in \R^{n} \times \R$ with $t\leq t_0$, we denote
\begin{equation*}
\begin{aligned}
\tilde \rho_{\beta}(z,z_0)
=\inf\big\{r>0: z\in Q_{r,\beta}(z_0)\big\}=\max\big\{|x-x_0|,\ h_{x_0}^{-1}(t_0-t)\big\}.
\end{aligned}
\end{equation*}
Then, for  any $z=(x,t)\in \R^{n}\times \R$, let
\begin{equation}\label{rho def}
\rho_{\beta}(z, z_0)=
\left\{
\begin{array}{rl}
\tilde \rho_{\beta}(z, z_0) \quad &\text{if}\quad t\leq t_0,\\[4pt]
\tilde \rho_{\beta}(z_0, z)\quad &\text{if}\quad t>t_0.
\end{array}\right.
\end{equation}
From \eqref{rho def}, it follows that
\begin{equation*}\label{d-r}
|x-x_0|\leq\rho_{\beta}(z, z_0),\quad \text{for all}\quad z=(x,t),\ z_0=(x_0,t_0)\in \R^{n} \times \R.
\end{equation*}
Moreover, for any $z_0 = (x_0, t_0) \in \R^{n}\times \R$ and $r>0$, it holds that
\begin{equation*}
Q_{r,\beta}(z_0) = \{z = (x,t) \in \R^{n}\times \R: \quad \rho_{\beta} (z_0, z) <r \ \text{ and }\  t  \leq t_0\}.
\end{equation*}
\smallskip
From the discussion, it follows that $\rho_\beta$ is a quasi-distance function on $\R^{n}\times \R$. We state this in the following remark, and skip the proof as it is similar to that of \cite[Lemma 2.8]{FP-1}.
\begin{remark}\label{quasi-metric lemma} Suppose that \eqref{beta-cond} holds. Then, the function $\rho_\beta$ defined in \eqref{rho def}  is a quasi-metric. In particular, there exists a constant $\Lambda= \Lambda (n, M_0) \geq 1$ such that
\begin{equation}\label{quasi-tri}
\rho_{\beta}(z_0,\bar{z})\leq \Lambda \big[\rho_{\beta}(z_0,z_1)+\rho_{\beta}(z_1,\bar{z})\big], \quad \forall z_0,\ z_1,\ \bar{z} \ \in \ \R^{n} \times \R.
\end{equation}
\end{remark}

\subsection{Invariance Properties} \label{Fn-spaces} We discuss some dilations and scalings that are invariant for the class of equations \eqref{main-eqn}. For a given matrix $\bA$ satisfying \eqref{ellip-cond} with its corresponding $\beta$ satisfying \eqref{beta-cond}, let us recall the definitions of weak solutions.
\begin{definition} \label{interior-weak-def} Let $p \in (1, \infty)$, $r>0$, $z_0 = (x_0, t_0) \in \mathbb{R}^{n} \times \mathbb{R}$, and $Q = Q_{r, \beta}(z_0)$. A function $u \in \W^{1,p}(Q, \beta)$ is said to be a weak solution of
\begin{equation} \label{scaling-eqn}
 u_t - \textup{div}(\bA(x,t) \nabla u) = \textup{div}(\beta(x) F(x,t))\quad \text{in} \quad Q,
\end{equation}
if  
\begin{align*}
& -\int_{Q} u(x,t) \partial_t\varphi(x,t) dx dt + \int_{Q} \wei{\bA(x,t) \nabla u(x,t), \nabla \varphi(x,t)} dx dt \\
& = -\int_{Q} \wei{F(x,t), \nabla \varphi(x,t)} \beta(x) dx dt, \quad \forall \ \varphi \in C_0^\infty(Q).
\end{align*}
\end{definition}

\smallskip
To study local boundary estimates for solutions to \eqref{main-eqn}, we also need the following notion of weak solutions.
\begin{definition} \label{boundary-weak-def} Let $p \in (1, \infty)$, $r>0$, $z_0 = (x_0, t_0) \in \mathbb{R}^{n} \times \mathbb{R}$, and $Q^+ = Q_{r, \beta}^+(z_0)$. A function $u \in \hW^{1,p}(Q^+, \beta)$ is said to be a weak solution of
\begin{equation*} 
\left\{
\begin{aligned}
u_t - \textup{div}(\bA(x,t) \nabla u) & = \textup{div}(\beta(x)F(x,t)) \quad &&\text{in} \quad Q^+,\\[4pt]
u & =  0  \quad &&\text{on} \quad T_{r}(x_0) \times \Gamma_{\beta, z_0}(r),
\end{aligned} \right.
\end{equation*}
if 
\begin{align*}
& -\int_{Q^+} u(x,t) \partial_t\varphi(x,t) dx dt + \int_{Q^+} \wei{\bA(x,t) \nabla u(x,t), \nabla \varphi(x,t)} dx dt \\
& = -\int_{Q^+} \wei{F(x,t), \nabla \varphi(x,t)} \beta(x) dx dt, \quad \forall \ \varphi \in C_0^\infty(Q^+).
\end{align*}
\end{definition}
\smallskip
Besides the standard heat dilation $(x,t) \mapsto (rx, r^2t)$, we introduce two special dilation properties for \eqref{main-eqn}. Let us consider the equation \eqref{scaling-eqn}. Without lost of generality, we may assume without loss of generality that $z_0 = (0,0)$.
As defined in \eqref{t-height}, we note that
\[ \E_{\beta} (r) = \E_{\beta, 0}(r) =  \left(\fint_{B_r} \beta(x)^{-n_0} \right)^{\frac{1}{n_0}}, \quad n_0 = \max\{n-1, 1\}.
\]
For $(x,t) \in Q_1=B_1\times (-1,0)$, let
\begin{equation} \label{scale-1}
\left\{
\begin{aligned}
\tilde{u}(x,t) &= u(rx, r^2\E_{\beta}(r) t),
\quad &&\tilde{\beta}(x) = [\E_{\beta}(r)]^{-1} \beta(rx),\\[4pt]
\tilde{\bA} (x,t)&= [\E_{\beta}(r)]^{-1} \bA(rx, r^2\E_{\beta}(r) t), \quad
&&\tilde{F}(x,t) =rF(rx, r^2\E_{\beta}(r) t).
\end{aligned} \right.
\end{equation}
The following only in the time variable dilation is special for the class of equations in \eqref{main-eqn}
\begin{equation} \label{scale-2}
\left\{
\begin{aligned}
\hat{u}(x,t) &= u(x, \E_{\beta}(r) t),
\quad &&\hat{\beta}(x) = [\E_{\beta}(r)]^{-1} \beta(x),\\[4pt]
\hat{\bA} (x,t)&= \E_{\beta}(r)]^{-1}\bA(x, \E_{\beta}(r) t), \quad
&&\hat{F}(x,t) =F(x, \E_{\beta}(r) t).
\end{aligned}  \right.
\end{equation}
From \eqref{scale-1} and \eqref{scale-2}, it follows that
\[
\E_{\tilde \beta}(1)=1 \quad \text{and} \quad \E_{\hat{\beta}}(r) =1 \quad \text{so that}\quad Q_{1,\tilde \beta} = Q_1 \quad \text{and}  \quad Q_{r,\hat{\beta}} = Q_r.
\]
The following lemma will be used in the paper. Its proof is straightforward, and thus omitted.
\begin{lemma} \label{scaling-lemma} Suppose that $u \in \W^{1,p}(Q_{r,\beta}, \beta)$ is a weak solution of \eqref{scaling-eqn} in $Q_{r,\beta}$ with some $p \in (1, \infty)$. Then $\tilde u \in \W^{1,p}(Q_{1}, \tilde{\beta})$ is a weak solution of
\begin{equation*} 
\tilde{u}_t - \textup{div}(\tilde{\bA}(x,t) \nabla \tilde{u}) = \textup{div}(\tilde{\beta}(x)  \tilde{F}(x,t)) \quad \text{in} \quad Q_1,
\end{equation*}
where $\tilde{u}$, $\tilde{\beta}$, $\tilde{\bA}$, and $\tilde{F}$ are defined in \eqref{scale-1}. Similar assertions also hold for weak solutions  defined in Definition~\ref{boundary-weak-def}, and also for $\hat{u}$. 
\end{lemma}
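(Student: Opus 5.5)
Lemma \ref{scaling-lemma} is a change of variables in the weak formulation of Definition \ref{interior-weak-def}. We take $z_0=(0,0)$ and write $\lambda=\E_\beta(r)$. For a test function $\tilde\varphi\in C_0^\infty(Q_1)$ we set
\[
\varphi(y,s)=\tilde\varphi(y/r,\, s/(r^2\lambda)).
\]
Since $Q_{r,\beta}=B_r\times(-r^2\lambda,0]$, the map $(x,t)\mapsto(rx,r^2\lambda t)$ is a bijection from $Q_1$ onto $Q_{r,\beta}$, so $\varphi\in C_0^\infty(Q_{r,\beta})$, and every test function in $Q_{r,\beta}$ arises this way. We substitute $y=rx$, $s=r^2\lambda t$, with $dy\,ds=r^{n+2}\lambda\,dx\,dt$, and use
\[
\nabla_y\varphi=r^{-1}\nabla_x\tilde\varphi,\qquad \partial_s\varphi=(r^2\lambda)^{-1}\partial_t\tilde\varphi,
\]
together with the same rules for $u$ and $\tilde u$.

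The three terms transform as follows.
\begin{itemize}
\item Time-derivative term: $-\int u\,\partial_s\varphi$ becomes $-r^{n}\int\tilde u\,\partial_t\tilde\varphi$.
\item Diffusion term: $\int\langle\bA\nabla u,\nabla\varphi\rangle$ becomes $r^{n}\lambda\int\langle\bA(rx,r^2\lambda t)\nabla\tilde u,\nabla\tilde\varphi\rangle = r^n\int\langle\tilde\bA\nabla\tilde u,\nabla\tilde\varphi\rangle$.
\item Right-hand side: $-\int\langle F,\nabla\varphi\rangle\beta$ becomes $-r^{n+1}\lambda\int\langle F(rx,r^2\lambda t),\nabla\tilde\varphi\rangle\beta(rx) = -r^n\int\langle\tilde F,\nabla\tilde\varphi\rangle\tilde\beta$.
\end{itemize}
The last identity holds because $\tilde F=rF(\cdot)$ and $\tilde\beta=\lambda^{-1}\beta(r\cdot)$, so $rF\cdot\lambda\beta = \tilde F\cdot\lambda^2\tilde\beta$. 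This leaves an extra factor $\lambda^2$ compared with the other two terms, which would break the identity. The right-hand side only balances if the scaled force is taken as $\tilde F=r\lambda F$ (equivalently, if $\beta$ in the datum is absorbed so that $\beta F=\lambda\tilde\beta F$). Before writing the proof I would recheck \eqref{scale-1} and, if needed, state the lemma with $\tilde F(x,t)=rF(rx,r^2\E_\beta(r)t)$ interpreted under the convention that matches the weak formulation; which convention is right is exactly what this bookkeeping decides. Once the factors are fixed, dividing by $r^n$ gives the weak formulation for $\tilde u$ in $Q_1$.

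Next, $\tilde u\in\W^{1,p}(Q_1,\tilde\beta)$. The weight $\tilde\beta$ is a dilate and multiple of $\beta$, so $[\tilde\beta]_{A_{1+1/n_0}}=[\beta]_{A_{1+1/n_0}}$ and \eqref{ellip-cond} holds for $\tilde\bA$ with $\tilde\beta$. The norms transform by explicit powers of $r$ and $\lambda$. For $\tilde u_t$ in $L^p(W^{-1,p}(B_1,\tilde\beta))$ we use the duality: for $\psi\in W^{1,p'}(B_1,\tilde\beta)$, the function $\psi(\cdot/r)$ belongs to $W^{1,p'}(B_r,\beta)$ with comparable norm, up to factors of $r$ and $\lambda$. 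Finally, $\E_{\tilde\beta}(1)=\lambda\cdot\lambda^{-1}=1$, so $Q_{1,\tilde\beta}=Q_1$.

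The same computation, with $r=1$ in space, proves the statement for $\hat u$. For Definition \ref{boundary-weak-def} the upper-half cylinder maps to the upper-half cylinder, since $x\mapsto rx$ preserves $\{x_n=0\}$ and $\{x_n>0\}$, so the zero trace on $T_r\times\Gamma_\beta(r)$ is preserved. The only delicate step is the consistency of the scaling factors in front of $F$ discussed above.
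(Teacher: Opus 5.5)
Your method, pulling the weak formulation back under $(x,t)\mapsto(rx,r^2\E_\beta(r)t)$, is the routine change of variables the paper has in mind when it omits the proof. However, your bookkeeping contains two slips, and the diagnosis you draw from them is wrong. As a result, the proposal never reaches a consistent set of definitions for which the lemma holds.

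Write $\lambda=\E_\beta(r)$. With $\tilde{\bA}=\lambda^{-1}\bA(rx,r^2\lambda t)$ as printed in \eqref{scale-1}, the diffusion term equals $r^n\lambda\int\wei{\bA(rx,r^2\lambda t)\nabla\tilde u,\nabla\tilde\varphi}\,dxdt=r^n\lambda^2\int\wei{\tilde{\bA}\nabla\tilde u,\nabla\tilde\varphi}\,dxdt$. Your identification with $r^n\int\wei{\tilde{\bA}\nabla\tilde u,\nabla\tilde\varphi}\,dxdt$ silently uses $\tilde{\bA}=\lambda\bA(rx,\cdot)$. Likewise, $\E_\beta$ is homogeneous of degree $-1$ in $\beta$, so the printed $\tilde\beta=\lambda^{-1}\beta(r\cdot)$ gives $\E_{\tilde\beta}(1)=\lambda\,\E_\beta(r)=\lambda^2$, not $\lambda\cdot\lambda^{-1}$. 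Under the printed definitions the three terms carry the factors $r^n$, $r^n\lambda^2$, $r^n\lambda^2$. So the odd one out is the time derivative, not the right-hand side, and no change of $\tilde F$ alone can repair this. Even under your mixed convention $\tilde{\bA}=\lambda\bA$, $\tilde\beta=\lambda^{-1}\beta$, balancing would require $\tilde F=r\lambda^2F$, not $r\lambda F$. Also, ``$\beta F=\lambda\tilde\beta F$'' is just the definition of $\tilde\beta$ restated.

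The correct repair is to flip the exponent of $\E_\beta(r)$ in the weight and the matrix while keeping $\tilde F$. Take $\tilde\beta(x)=\E_\beta(r)\beta(rx)$, $\tilde{\bA}(x,t)=\E_\beta(r)\bA(rx,r^2\E_\beta(r)t)$, $\tilde F(x,t)=rF(rx,r^2\E_\beta(r)t)$. Analogously, in \eqref{scale-2} take $\hat\beta=\E_\beta(r)\beta$, $\hat{\bA}=\E_\beta(r)\bA(x,\E_\beta(r)t)$, $\hat F=F(x,\E_\beta(r)t)$. With these definitions:
\begin{itemize}
\item each of the three terms carries exactly the factor $r^n$;
\item $\E_{\tilde\beta}(1)=\E_\beta(r)^{-1}\E_\beta(r)=1$, so $Q_{1,\tilde\beta}=Q_1$, and likewise $\E_{\hat\beta}(r)=1$;
\item \eqref{ellip-cond} holds for $(\tilde{\bA},\tilde\beta)$ with the same $\nu$, and the $A_{1+1/n_0}$ constant is unchanged.
\end{itemize}
A one-line sanity check is $\beta\equiv c$: then $\E_\beta(r)=c^{-1}$, and $u_t=c\Delta u$ must rescale to the heat equation with $\tilde{\bA}=I$, $\tilde\beta=1$. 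This is also the normalization the paper actually uses afterwards, e.g.\ in Lemma~\ref{Lipschitz-constant}, where after rescaling $\E_\beta(2)=1$ and $1\le\fint_{B_2}\beta\,dx\le M_0$. With these definitions, the rest of your argument goes through unchanged: the test-function correspondence, the norm and duality bookkeeping for $\tilde u_t$, and the preservation of $\{x_n=0\}$ for half cylinders.
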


\section{Lipschitz regularity estimates for simple equations} \label{Lips-est-sec}
Let $\bar{\bf{A}}(t)$ be a measurable $n\times n$ symmetric matrix function defined on the open interval $\Gamma_{\beta }(2r)$ with some $r>0$. We assume that $\bar{\bf{A}}(t)$ satisfies the ellipticity and boundedness conditions: there exists a constant $\nu \in (0,1)$ such that
\begin{equation} \label{ellip-constant}
\nu (\beta)_{B_{2r}}|\xi|^2 \leq \wei{\bar{\bA}(t) \xi,\, \xi} \quad \text{and} \quad |\bar{\bA}(t)| \leq \nu^{-1}(\beta)_{B_{2r}}, \qquad \forall \ \xi \in \mathbb{R}^n
\end{equation}
with some $r>0$, $\Gamma_{\beta }(2r) = \Gamma_{\beta }((0,0), 2r)$ defined in \eqref{cylinder-def}, and 
\[(\beta)_{B_{2r}} = \fint_{B_{2r}} \beta(x) dx = \frac{1}{|B_{2r}|} \int_{B_{2r}} \beta(x) dx
\]
with a given $\beta$ satisfying \eqref{beta-cond}. We begin with the following interior Lipschitz estimates.
\begin{lemma} \label{Lipschitz-constant} For every $\nu \in (0,1), p_0 \in (1, 2)$, and $M_0 \geq 1$, there exists a constant $N = N(n, \nu, p_0, M_0)>0$ such that the following assertion holds.  Let $\beta$ be a weight satisfying \eqref{beta-cond}, and let $\bar{\bA}(t)$ be a matrix satisfying \eqref{ellip-constant} on $\Gamma_{\beta }(2r)$ with some $r>0$. Suppose that $v \in \W^{1,p_0}(Q_{2r, \beta})$ is a weak solution of
\begin{equation*} 
v_t - \textup{div}(\bar{\bA}(t)\nabla v) = 0 \quad \text{in} \quad Q_{2r, \beta}.
\end{equation*}
Then  $v \in \W^{1,2}_{\textup{loc}}(Q_{2r, \beta})$. Moreover, $\nabla v \in L^\infty_{\textup{loc}}(Q_{2r, \beta})$ and
\begin{equation} \label{Lip-inter-est-1013}
 \|\nabla v\|_{L^\infty(Q_{r, \beta})} \leq  N\Big(\fint_{Q_{2r, \beta}} |\nabla v(x,t)|^{p_0}\, dxdt  \Big)^{1/p_0}.
\end{equation}
\end{lemma}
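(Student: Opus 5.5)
The plan is to first reduce to a uniformly parabolic problem on a standard cylinder, and then exploit the fact that $\bar{\bA}$ depends only on $t$.

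\emph{Normalization.} By the heat dilation $(x,t)\mapsto(rx,r^2t)$ we may take $r=1$. Set $\lambda=(\beta)_{B_2}$ and rescale time by $t\mapsto t/\lambda$ (a pure time dilation in the spirit of \eqref{scale-2}). The new coefficient $\lambda^{-1}\bar{\bA}(\lambda t)$ is then uniformly elliptic with constant $\nu$, and $Q_{2,\beta}$ becomes $B_2\times(-4\lambda\E_\beta(2),0]$. By H\"older's inequality together with \eqref{beta-cond},
\[
1\le \Big(\fint_{B_2}\beta\,dx\Big)\Big(\fint_{B_2}\beta^{-n_0}dx\Big)^{1/n_0}\le M_0 ,
\]
so the height $4\lambda\E_\beta(2)$ lies in $[4,4M_0]$. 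The same two-sided bound holds for $Q_{1,\beta}$ with radius $1$, where one uses the doubling property in Lemma \ref{property}(i) to compare $\E_\beta(1)$ with $\E_\beta(2)$ and $(\beta)_{B_1}$ with $(\beta)_{B_2}$. Both normalized cylinders are therefore comparable to standard parabolic cylinders with constants depending only on $(n,M_0)$, and the averages in \eqref{Lip-inter-est-1013} change only by such constants. So it suffices to prove: if $v\in\W^{1,p_0}(Q)$ solves $v_t-\textup{div}(\bar{\bA}(t)\nabla v)=0$ with $\bar{\bA}$ measurable in $t$ and uniformly elliptic, then $v\in\W^{1,2}_{\rm loc}$ and $\sup_{Q'}|\nabla v|\le N\,\|\nabla v\|_{L^{p_0}(Q)}$ on nested cylinders $Q'\Subset Q$ with comparable dimensions. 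We then cover by finitely many such cylinders.

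\emph{A priori estimate for regular solutions.} Since the coefficients do not depend on $x$, every spatial derivative $D_x^k v$ of a sufficiently regular solution solves the same equation. Iterating the standard Caccioppoli inequality on shrinking cylinders bounds $\|D_x^k\nabla v\|_{L^2}$ by $\|\nabla v\|_{L^2}$ on a larger cylinder. Combining this with $\sup_t\|\cdot\|_{L^2_x}$ control from the energy estimate and the Sobolev embedding in $x$ (for $k>n/2$) gives $\|\nabla v\|_{L^\infty(Q')}\le N\|\nabla v\|_{L^2(Q'')}$. To lower the exponent from $2$ to $p_0$, use the interpolation $\|\nabla v\|_{L^2}\le\|\nabla v\|_{L^\infty}^{1-p_0/2}\|\nabla v\|_{L^{p_0}}^{p_0/2}$, Young's inequality, and the standard absorption lemma over a continuum of intermediate cylinders.

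\emph{Regularity of $\W^{1,p_0}$ solutions (main obstacle).} The difficulty is justifying the above for a solution known only to lie in $\W^{1,p_0}$ with $p_0<2$, where even the energy estimate is not a priori available. I would first mollify in $x$ only, setting $v^\varepsilon=v*_x\varphi_\varepsilon$. Because $\bar{\bA}$ is $x$-independent, $v^\varepsilon$ is again a solution on a slightly smaller cylinder and is smooth in $x$. Its $t$-regularity comes from the equation, since $v^\varepsilon_t=\textup{div}(\bar{\bA}\nabla v^\varepsilon)\in L^{p_0}_tC^k_x$. The remaining gap is integrability in $t$, which is only $L^{p_0}$.

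To close it I would use the Brezis-type duality argument. Take a cutoff $\zeta$ and write $w=\zeta v$, which solves $w_t-\textup{div}(\bar{\bA}\nabla w)=f$ with zero data, where $f$ consists of lower-order terms in $v$. Let $\tilde w$ be the energy solution of the same problem. Then test $w-\tilde w$ against solutions of the adjoint (backward) problem with smooth right-hand sides; these are smooth in $x$, by the $x$-independence again, and Lipschitz, so the pairing is well defined for $\W^{1,p_0}$ functions. This gives $w=\tilde w$, hence $v\in\W^{1,2}_{\rm loc}$. With this in hand the a priori estimate applies to $v^\varepsilon$ with constants independent of $\varepsilon$, and letting $\varepsilon\to0$ yields \eqref{Lip-inter-est-1013}.
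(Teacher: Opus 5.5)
Your normalization and your a priori estimate are fine, and they match the last part of the paper's proof. The a priori estimate consists of: spatial derivatives of a regular solution solve the same equation; Caccioppoli, energy and Sobolev in $x$; then interpolation and absorption down to $L^{p_0}$.

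\textbf{The gap is the duality step meant to put $v$ in $\W^{1,2}_{\textup{loc}}$.} For $w=\zeta v$ the forcing is $f=\zeta_t v-\langle\bar{\bA}\nabla v,\nabla\zeta\rangle-\textup{div}(v\,\bar{\bA}\nabla\zeta)$. It contains $\nabla v$ itself, which is known only to lie in $L^{p_0}$ with $p_0<2$, so in particular only $L^{p_0}$ in time. It also contains $v$, known only slightly better ($L^{p_1}$ by the parabolic embedding). Nothing you know places such an $f$ in $L^2_tH^{-1}_x+L^1_tL^2_x$. So the energy solution $\tilde w$ you compare with is not available, and asserting $w=\tilde w$ presupposes exactly the integrability you are trying to prove.

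Because the forcing contains $\nabla v$, duality gains only one Sobolev step at a time. This is why the paper bootstraps. It solves the adjoint problem with $G\in L^{p_1'}$ in $\W^{1,p_1'}$. It then bounds $\int\eta\,G\cdot\nabla v$ directly by $\|G\|_{L^{p_1'}}\big(\|v\|_{L^1}+\|\nabla v\|_{L^{p_0}}\big)$; no comparison solution is ever built. This gives $\nabla v\in L^{p_1}_{\textup{loc}}$, and the paper repeats the step finitely many times until the exponent reaches $2$.

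\textbf{Simpler fix: drop the duality entirely.} Your own mollification already finishes the job, because the ``remaining gap'' in time integrability does not exist. Both $v^\varepsilon$ and $v^\varepsilon_t=\textup{div}(\bar{\bA}\nabla v^\varepsilon)$ lie in $L^{p_0}(I;C^k(\overline{B'}))$. Hence $v^\varepsilon\in W^{1,p_0}(I;C^k(\overline{B'}))\subset C(\bar I;C^k(\overline{B'}))$, so every spatial derivative of $v^\varepsilon$ is bounded on $\overline{B'}\times\bar I$. Three consequences follow:
\begin{itemize}
\item the energy and Caccioppoli identities for $D_x^k v^\varepsilon$ are legitimate;
\item $\sup|\nabla v^\varepsilon|$ is qualitatively finite, so the absorption lemma applies;
\item you get $\|\nabla v^\varepsilon\|_{L^\infty(Q')}\le N\|\nabla v^\varepsilon\|_{L^{p_0}(Q'')}\le N\|\nabla v\|_{L^{p_0}(Q)}$ with $N$ independent of $\varepsilon$.
\end{itemize}
Letting $\varepsilon\to0$ gives the gradient bound, and $v\in\W^{1,2}_{\textup{loc}}$ then follows from the equation. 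With the duality paragraph deleted (or replaced by the paper's bootstrap), your argument is complete. This route is also more elementary than the paper's, since it needs no $\W^{1,q}$-solvability of the adjoint Dirichlet problem with time-measurable coefficients.

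\textbf{A small point in the normalization.} You need $Q_{1,\beta}$ to sit inside $Q_{2,\beta}$ with a definite gap in height. For $n=2$ this needs Lemma \ref{property}(iii), i.e.\ $\bar\beta(B_1)\le\eta\,\bar\beta(B_2)$ with $\eta<1$, and not only doubling.
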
 
\begin{proof} By the dilation $(x,t) \mapsto (rx, r^2 \E_{\beta}(2r)t)$ and Lemma \ref{scaling-lemma}, it is sufficient to prove the lemma when
\[
r =1 \quad \text{and} \quad \E_{\beta}(2) = \left(\fint_{B_2} \beta(x)^{-n_0} \right)^{\frac{1}{n_0}} =1.
\]
In this setting, $Q_{2,\beta} = Q_2$, and  $v \in \W^{1,p_0}(Q_2)$ is a weak solution of 
\[
v_t - \textup{div}(\bar{\bA} (t) \nabla v)  = 0 \quad \text{in} \quad Q_{2}.
\]
In addition, we have
\[
1\leq \left(\fint_{B_2} \beta(x) dx \right) \left(\fint_{B_2} \beta(x)^{-n_0} dx \right)^{\frac{1}{n_0}}  =\fint_{B_2} \beta(x) dx \leq M_0.
\]
Therefore, the conditions in \eqref{ellip-constant} are reduced to
\begin{equation} 
\nu |\xi|^2 \leq \wei{\bar{\bA}(t)\xi, \xi} \quad \text{and} \quad |\bar{\bA}(t)| \leq \nu^{-1} M_0, \quad \forall\ t \in (-4, 0), \quad \forall \ \xi \in \mathbb{R}^n.
\end{equation}
To prove the first assertion in the lemma, it suffices to show $\nabla v \in L^2_{\text{loc}}(Q_2)$. To this end, we employ the duality argument introduced in \cite{Brezis} for the elliptic case. Let $p_1 \in (p_{0}, \infty)$ be a number that satisfy
\begin{equation} \label{p-1-def-0710}
\left\{
\begin{array}{ccc}
\frac{1}{p_1} \geq  \frac{1}{p_0} - \frac{1}{n+2} \quad \text{if} \quad n \geq 2\\
\frac{1}{p_1} \geq \frac{1}{p_0} -  \frac{1}{4} \quad \text{if} \quad n=1.
\end{array}\right.
\end{equation}
By the parabolic Sobolev embedding theorem (see \cite[Lemma 3.1]{DP} for example) and the PDE of $v$, we see that
\begin{equation} \label{v-p_1-est-03-16}
\|v\|_{L^{p_1}(Q_2)} \leq N \|v\|_{\W^{1,p_0}(Q_2)} \leq N \Big[ \|v\|_{L^{1}(Q_2)} + \|\nabla v\|_{L^{p_0}(Q_2)}\Big].
\end{equation}
We claim that 
\begin{equation} \label{L-r-est-03016}
\|\nabla v\|_{L^{p_1}(Q_r)} \leq N \Big[ \|v\|_{L^{1}(Q_2)} + \|\nabla v\|_{L^{p_0}(Q_2)}\Big],
\end{equation}
for each $r \in (0,2)$ and $N = N(n, M_0, r)>0$.

\smallskip
Note that the lemma follows from the classical regularity theory when \eqref{L-r-est-03016} holds with $p_1 \geq 2$. However, if $p_1 \in (1,2)$, we let $p_2$ be defined as in \eqref{p-1-def-0710} using $p_1$ instead of $p_0$. If $p_2 \in (1, 2)$, we repeat the proof with $p_0$ replaced by $p_1$. We keep iterating to otain obtain $\nabla v \in L^2_{\text{loc}}(Q_2)$. Hence, it is sufficient to prove \eqref{L-r-est-03016} with the assumption that $p_1 \in (1, 2)$.
 
 \smallskip
For a given $G \in C_0^\infty(Q_2)^n$, let $\phi \in \W^{1, p_1'}(Q_2)$ be the solution of the boundary value problem
\begin{equation} \label{phi-duality}
\left\{
\begin{array}{cccl}
-\phi_t -\text{div}(\bar{\bA}(t)^* \nabla \phi) & = & \text{div}(G) & \quad \text{in} \quad Q_2,\\
\phi & = &0 & \quad \text{on} \quad \partial' Q_2.
\end{array}\right.
\end{equation}
Here, $p_1'>2$ is the number satisfying $\frac{1}{p_1} + \frac{1}{p_1'} =1$. Note that by the standard regularity estimates and the Sobolev embedding (see \cite[Lemma 3.1]{DP} for example), we have
\begin{equation} \label{phi-est-03016}
\|\phi\|_{\W^{1, p_1'}(Q_2)} \leq N \|G\|_{L^{p_1'}(Q_2)}.
\end{equation}
Indeed, as $\bar{\bA}$ is only a function depending only in time variable, by using the different quotient if needed to the equation \eqref{phi-duality}, we can conclude that $\phi \in L^{p_1'}((-4, 0), W^{k, p_1'} (B_2))$ for all $k \in \mathbb{N}$. As such, $\phi$ is a strong solution to \eqref{phi-duality} and $\phi_t \in L^{p_1'}(Q_2)$.  

\smallskip
Next, let $q \in (p_1' , \infty)$ is the sufficiently large number satisfying
\begin{equation} \label{q-def-0710}
\left\{
\begin{array}{ccc}
\frac{1}{q} \geq  \frac{1}{p_1'} - \frac{1}{n+2} \quad \text{if} \quad n \geq 2\\
\frac{1}{q} \geq  \frac{1}{p_1'}  -\frac{1}{4}  \quad \text{if} \quad n=1.
\end{array}\right.
\end{equation}
As $p_1 \in (1, 2)$, it follows from \eqref{p-1-def-0710} and \eqref{q-def-0710} that
\begin{equation*} 
p_1 =p_0' \quad \text{if} \quad n \geq 2, \quad \text{and} \quad  q \geq p_0' \quad \text{if} \quad n \geq 1,
\end{equation*}
where $p_0' \in (1, \infty)$ satisfying $\frac{1}{p_0} + \frac{1}{p_0'} =1$. Then, by the H\"{o}lder inequality, the parabolic Sobolev embedding (see \cite[Lemma 3.1]{DP} and for example), and \eqref{phi-est-03016}, we infer that
\begin{equation} \label{q-p-zero-relation-0710}
\|\phi\|_{L^{p_0'}(Q_2)} \leq N(n) \|\phi\|_{L^q(Q_2)} \leq N \|G\|_{L^{p_1'}(Q_2)}.
\end{equation}

\smallskip
Now, for each $\eta \in C_0^\infty(Q_2)$, we use $v \eta$ as a test function to the equation \eqref{phi-duality} to obtain
\begin{align*}
& \int_{Q_2} \eta(x,t) G(x,t) \cdot \nabla v(x,t) dxdt  = - \int_{Q_2} v(x,t) G(x,t) \cdot \nabla \eta(x,t) dxdt \\
& \qquad + \int_{Q_2} \Big[v \phi_t(x,t) \eta(x,t) - \wei{\bar{\bA}(t) \nabla v, \nabla \phi)} \eta   -  v\wei{\bar{\bA}(t) \nabla \eta, \nabla \phi}  \Big] dxdt \\
& = - \int_{Q_2} \Big[ v G \cdot \nabla \eta  + v\wei{\bar{\bA}(t) \nabla \eta, \nabla \phi}  + v  \phi \partial_t \eta  +  \wei{\bar{\bA}(t) \nabla v, \nabla \eta)} \phi \Big]dxdt,
\end{align*}
where in the last step we used the weak formulation for the equation of $v$ with the test function $\phi \eta$. We note that all terms in the integrations are well-defined. In particular, it follows from H\"{o}lder inequality, \eqref{v-p_1-est-03-16}, and \eqref{phi-est-03016} that
\begin{align*}
& \left| \int_{Q_2} \eta(x,t) G(x,t) \cdot \nabla v(x,t) dxdt  \right|   \leq  N \int_{Q_2}\Big[ \Big( |v| |G| + |\nabla v| |\phi| \big) + |v|\big( |\phi| + |\nabla \phi| \big) \Big] dxdt\\
& \leq N \Big[\big(\|G\|_{L^{p_1'}(Q_2)} \|v\|_{L^{p_1}(Q_2)} +  \|\nabla v\|_{L^{p_0}(Q_2)} \|\phi\|_{L^{p_0'}(Q_2)} \big) \\
& \qquad \qquad+  \|v\|_{L^{p_1}(Q_2)} \Big(\|\phi\|_{L^{p_1'}(Q_2)} + \|\nabla \phi\|_{L^{p_1'}(Q_2)} \Big) \Big] \\
& \leq N\|G\|_{L^{p_1'}(Q_2)} \Big[ \|v\|_{L^{1}(Q_2)} + \|\nabla v\|_{L^{p_0}(Q_2)}\Big],\end{align*}
where in the last step we used \eqref{v-p_1-est-03-16} to control $\|v\|_{L^{p_1}(Q_2)}$,  \eqref{q-p-zero-relation-0710} to control $\|\phi||_{L^{p_0'}(Q_2)}$, and the first assertion in \eqref{phi-est-03016}  to control $\|\phi\|_{L^{p_1'}(Q_2)} + \|\nabla \phi\|_{L^{p_1'}(Q_2)}$. Hence, for every $r \in (0,2)$, by taking $\eta =1$ on $Q_{r, \beta}$ and $\eta \in C_0^\infty(Q_2)$, we can find $N = N(n, M_0, r)>0$ such that
\[
\|\nabla v\|_{L^{p_1}(Q_{r, \beta})} \leq N \Big[\|v\|_{L^{1}(Q_2)} + \|\nabla v\|_{L^{p_0}(Q_2)} \Big].
\]
The assertion \eqref{L-r-est-03016} is proved. In particular, after the iteration as we discussed, we have
\[
\|\nabla v\|_{L^{2}(Q_{r, \beta})} \leq N \Big[\|v\|_{L^{1}(Q_2)} + \|\nabla v\|_{L^{p_0}(Q_2)} \Big]
\]
for all $r \in (0,2)$ and $N = N(n, \nu, p_0, M_0, r)>0$. The assertion $v \in \W^{1,2}_{\text{loc}}(Q_2)$ in the lemma is proved.

\smallskip
It remain to \eqref{Lip-inter-est-1013}. Note that as $v \in \W^{1,2}_{\text{loc}(Q_2)}$, it follows from the standard regularity estimate that $\nabla v \in L^\infty_{\textup{loc}}(Q_2)$. Moreover, there is $N = N(n, \nu, M_0)>0$ such that for every $\rho \in (0,1)$, we have
\begin{align*}
\|\nabla v\|_{L^\infty(Q_{\rho, \beta})} & \leq N \left(\fint_{Q_{2\rho, \beta}} |\nabla v(x,t)|^2 dx dt \right)^{\frac{1}{2}} \\
& \leq N \|\nabla u\|_{Q_{2\rho, \beta}}^{\frac{2-p_0}{2}} \left(\fint_{L^\infty(Q_{2\rho, \beta})} |\nabla v(x,t)|^{p_0} dx dt \right)^{\frac{1}{2}} \\
& \leq \frac{1}{2} \|\nabla u\|_{L^\infty(Q_{2\rho, \beta})} + N  \left(\fint_{Q_{2\rho, \beta}} |\nabla v(x,t)|^{p_0} dx dt \right)^{\frac{1}{p_0}}.
\end{align*}
Then, it follows from the standard iteration (see \cite[p. 75]{Lin}, for example) that
\[
\|\nabla v\|_{L^\infty(Q_{1, \beta})} \leq N \left(\fint_{Q_{2, \beta}} |\nabla v(x,t)|^{p_0} dx dt \right)^{\frac{1}{p_0}}.
\]
The proof of the lemma is then completed.
\end{proof}
Next, for the reader's convenience, we refer to the definitions of $T_{2r}(x_0)$ in \eqref{flat-bdr} and  $\Gamma_{\beta, z_0}(2r)$ in \eqref{cylinder-def}. We also have the following boundary Lipschitz regularity estimates for the class of equations with frozen coefficients. 
\begin{lemma} \label{Bdr-Lipschitz-constant} For every $\nu \in (0,1), p_0 \in (1, \infty)$, and $M_0 \geq 1$, there is a constant $N = N(n, \nu, p_0, M_0)>0$ such that the following assertion holds. Let  $\beta(x)$ be a weight satisfying \eqref{beta-cond}, and $\bar{\bA}(t)$ be a symmetric matrix satisfying \eqref{ellip-constant} on $\Gamma_{\beta}(2r)$ with some $r>0$. Suppose that $v \in \hW^{1,p_0}(Q_{2r, \beta}^+)$ is a weak solution of
\begin{equation*}
\left\{
\begin{aligned}
v_t - \textup{div}(\bar{\bA}(t)\nabla v) & =0  \quad &&\text{in} \quad Q_{2r, \beta}^+,\\[4pt]
v & = 0\quad &&\text{on} \quad T_{2r} \times \Gamma_{\beta}(2r)
\end{aligned} \right.
\end{equation*}
Then $v \in \W^{1,2}_{\textup{loc}}(Q_{2r,\beta}^+)$. Moreover, $\nabla v \in L^\infty_{\textup{loc}}(Q_{2r, \beta}^+)$ and
\[
\|\nabla v\|_{L^\infty(Q_{r, \beta}^+)} \leq  N\left(\fint_{Q_{2r, \beta}^+} |\nabla v(x,t)|^{p_0}dxdt  \right)^{1/p_0}.
\]
\end{lemma}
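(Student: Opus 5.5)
The proof follows that of Lemma \ref{Lipschitz-constant}, adapted to the half cylinder with the lateral Dirichlet condition on the flat boundary. First we normalize. By Lemma \ref{scaling-lemma}, applied to the boundary notion of Definition \ref{boundary-weak-def}, and the dilation $(x,t)\mapsto (rx, r^2\E_\beta(2r)t)$, we may assume $r=1$ and $\E_\beta(2)=1$. Then $Q^+_{2,\beta}=Q_2^+$, and the $A_{1+\frac1{n_0}}$ bound gives $1\le (\beta)_{B_2}\le M_0$. Consequently \eqref{ellip-constant} becomes uniform ellipticity $\nu|\xi|^2\le \langle\bar\bA(t)\xi,\xi\rangle$, $|\bar\bA(t)|\le \nu^{-1}M_0$ on $(-4,0)$.

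Next we upgrade integrability. If $p_0\ge 2$, then $v$ is already an energy solution and we go directly to the last step. If $p_0\in(1,2)$, we run the Brezis-type duality argument to show $\nabla v\in L^2_{\rm loc}$ up to the flat boundary, i.e.\ on $Q_\rho^+$ for each $\rho<2$. Choose $p_1$ as in \eqref{p-1-def-0710}. The parabolic Sobolev embedding on $Q_2^+$, using the zero trace on $T_2$, gives $\|v\|_{L^{p_1}(Q_2^+)}\le N(\|v\|_{L^1(Q_2^+)}+\|\nabla v\|_{L^{p_0}(Q_2^+)})$. For $G\in C_0^\infty(Q_2^+)^n$, let $\phi$ solve the backward adjoint problem $-\phi_t-\mathrm{div}(\bar\bA^*(t)\nabla\phi)=\mathrm{div}\,G$ in a smooth domain $D$ with $Q_{\rho'}^+\subset D\subset Q_2^+$ (for instance a half cylinder with smoothed corners), with $\phi=0$ on the parabolic boundary of $D$ (in backward time), in particular on the flat part.

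Since $\bar\bA$ depends only on $t$, the $W^{1,p_1'}$ estimate \eqref{phi-est-03016} holds for $\phi$. In addition, tangential difference quotients together with the equation give $\phi_t\in L^{p_1'}$, so $\phi$ is a strong solution. As in \eqref{q-p-zero-relation-0710}, Sobolev embedding then gives $\|\phi\|_{L^{p_0'}}\le N\|G\|_{L^{p_1'}}$.

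We now test. Take a cutoff $\eta$ that is smooth, vanishes near the curved and lateral boundary and near the initial time, but need not vanish on the flat boundary $T_2$. Test the $\phi$-equation with $v\eta$ and the $v$-equation with $\phi\eta$; both test functions are admissible because $v$ and $\phi$ vanish on $T_2$, which is exactly where the Dirichlet conditions are used. Subtracting yields the same identity as in the interior proof, with no boundary terms. Hölder's inequality gives $|\int\eta G\cdot\nabla v|\le N\|G\|_{L^{p_1'}}(\|v\|_{L^1}+\|\nabla v\|_{L^{p_0}})$, so by duality $\nabla v\in L^{p_1}(Q_\rho^+)$. Iterating finitely many times, shrinking the radius at each step, yields $\nabla v\in L^2(Q_\rho^+)$, so $v\in\W^{1,2}_{\rm loc}(Q_{2}^+)$ up to $T_2$.

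Finally we obtain the Lipschitz bound. For energy solutions of equations with coefficients depending only on $t$ and zero data on the flat boundary, the classical boundary regularity (difference quotients in $x'$, the equation to recover $\partial_{nn}v$, and odd reflection arguments) gives $\|\nabla v\|_{L^\infty(Q_\rho^+)}\le N(\fint_{Q_{2\rho}^+}|\nabla v|^2)^{1/2}$ on all half cylinders. Because $v=0$ on $T_2$, Poincaré's inequality allows us to drop $\|v\|_{L^1}$ in favor of $\|\nabla v\|_{L^{p_0}}$. We then interpolate $L^2$ between $L^{p_0}$ and $L^\infty$, apply Young's inequality, and absorb with the standard iteration lemma (\cite[p.~75]{Lin}) to conclude $\|\nabla v\|_{L^\infty(Q_1^+)}\le N(\fint_{Q_2^+}|\nabla v|^{p_0})^{1/p_0}$; undoing the scaling finishes the proof.

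The main obstacle is the duality step near the corner where $T_2$ meets the lateral boundary. There one needs $W^{1,p_1'}$ solvability of the adjoint problem in a domain containing flat boundary pieces, and must verify that $v\eta$ and $\phi\eta$ are legitimate test functions under only $\hW^{1,p_0}$ regularity. I would handle this by choosing the smoothed domain $D$ and approximating $v$ by mollification in $t$ and tangential $x'$, which preserves the zero trace on $T_2$.
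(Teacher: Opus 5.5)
Your proposal is correct in outline, but it takes a different route from the paper. The paper never runs the duality argument at the boundary. After normalizing to $r=1$, $\E_\beta(2)=1$, it asserts that a spatial rotation makes $\bar\bA(t)$ diagonal, extends $v$ oddly in $x_n$ to $Q_2$, and applies the interior Lemma~\ref{Lipschitz-constant}. That gives the $\W^{1,2}_{\textup{loc}}$ upgrade and the Lipschitz bound in one stroke.

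That reduction is much shorter, but the odd extension solves an equation across $\{x_n=0\}$ only when $\bar a^{in}(t)=0$ for $i<n$. A rotation diagonalizing $\bar\bA(t)$ depends on $t$ and in general moves the hyperplane $\{x_n=0\}$. The shear $x'\mapsto x'-x_n\,\bar a^{in}/\bar a^{nn}$, which removes the mixed terms for a constant matrix, is time dependent here. It would create a drift involving $\frac{d}{dt}(\bar a^{in}/\bar a^{nn})$, which is meaningless for merely measurable $\bar\bA(t)$.

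Your duality step avoids this entirely. You solve the adjoint problem in a smoothed half domain and use the zero traces of $v$ and $\phi$ on $T_2$ to admit $\phi\eta$ and $v\eta$ as test functions; this works for every symmetric $\bar\bA(t)$. The zero boundary data also let you trade $\|v\|_{L^1}$ for $\|\nabla v\|_{L^{p_0}}$ via Poincar\'e.

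The price is that your two remaining boundary-regularity claims must hold for general $\bar\bA(t)$, and the mechanisms you name for them do not work.
\begin{itemize}
\item Odd reflection has exactly the obstruction described above.
\item In the parabolic setting, tangential regularity plus the equation controls only the combination $v_t-\bar a^{nn}(t)\partial_{nn}v$, not $\partial_{nn}v$ or $v_t$ separately.
\item The same objection applies to your claim that tangential difference quotients and the equation give $\phi_t\in L^{p_1'}$ near the flat part of $\partial D$.
\end{itemize}

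A valid argument runs as follows.
\begin{enumerate}
\item Every $D^\alpha_{x'}v$ solves the same problem with zero data on $T_2$, so boundary energy estimates give $D^\alpha_{x'}\nabla v\in L^2$ up to $T_2$ for all $\alpha$.
\item Make the bi-Lipschitz time change $s=\int \bar a^{nn}(\tau)\,d\tau$. Then for each fixed $x'$, $v$ solves $\partial_s v-\partial_{nn}v=f$ on the half line with $v=0$ at $x_n=0$. Here $f$ is a combination, with bounded $t$-dependent coefficients, of the derivatives $\partial_j\partial_i v$ with $i<n$.
\item Bootstrap from $p=2$ to $p=6$ using half-line $W^{2,1}_p$ estimates, the one-dimensional parabolic embedding $\frac1q=\frac1p-\frac13$ for the gradient, and vector-valued Sobolev embedding in $x'$.
\item At $p=6$, $\partial_n v$ is H\"older continuous in $(x_n,s)$, hence bounded, with the required estimate.
\end{enumerate}
The same reduction, or the $W^{2,1}_p$ theory for equations with coefficients measurable in $t$, gives $\phi\in W^{2,1}_{p_1'}$, which is what your test-function identity needs. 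With these replacements your proof goes through.
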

\begin{proof} By applying the dilation as in the proof of Lemma \ref{Lipschitz-constant}, we can assume that $r=1$ and $\E_\beta(2)=1$. Then, it follows that
\[
\nu |\xi|^2 \leq \wei{\bar{\bA}(t)\xi, \xi} \quad \text{and}  \quad |\bar{\bA}(t)| \leq \nu^{-1} M_0, \quad \forall \ t \in (-4, 0), \quad \forall \xi \in \mathbb{R}^n.
\]
Since $\bar{\bA}$ is symmetric, by applying a spatial rotation, we can assume that $\bar{\bA}(t)$ is diagonal. Then, by using the odd extension, and apply Lemma \ref{Lipschitz-constant}, we obtain the assertions of the lemma.  The proof is completed.
\end{proof}
\section{Local interior estimates and proof of Theorem \ref{inter-theorem}}\label{interior section}
This section provides key estimates, and then proves Theorem \ref{inter-theorem}. For readers convenience, let us recall that for $r>0$, we write
\begin{equation} \label{Theta-beta-def}
\Theta_{\bA, r} (x,t) = |\bA(x,t) - (\bA)_{B_{r}}(t)|\beta(x)^{-1} \quad \text{where} \quad (\bA)_{B_{r}} = \frac{1}{|B_r|} \int_{B_r} \bA(x,t) dx.
\end{equation}
\subsection{Interior Caccioppoli estimates and Poincar\'{e} estimates} \label{inter-energy} We study the problem
\begin{equation} \label{eqn-Q_R}
u_t - \textup{div}(\bA(x,t) \nabla u)  = \textup{div}(\beta(x) F(x,t))  \quad \text{in} \quad Q_{4,\beta}.
\end{equation}
Recall that for a given weak solution $u \in \W^{1,2}(Q_{4, \beta}, \beta)$, and for $\rho \in (0,4)$, the average of $u$ in the ball $B_\rho$ with respect to the weight $\eta_{0, \rho}^2$ defined in \eqref{average-eta-weight} as following
\[
[u]_{0, \rho}(t) =\fint_{B_\rho} u(x, t) \eta_{0, \rho}(x)^2 dx.
\]
We begin with the following lemma on Caccioppoli type estimates for \eqref{eqn-Q_R} in the spirit of \cite{GS}.
\begin{lemma} \label{Caccio-1} For every $\nu \in (0,1)$ and  $M_0 \geq 1$, there exists a constant $N = N(n, \nu)>0$ such that the following holds. Suppose that $u \in \W^{1,2}_{\textup{loc}}(Q_{4,\beta}, \beta)$ is a weak solution of \eqref{eqn-Q_R} in which $\bA$ satisfies \eqref{ellip-cond} in $Q_{4,\beta}$ and its corresponding $\beta$ satisfies \eqref{beta-cond}, and $F \in L^2_{\textup{loc}}(Q_{4,\beta}, \beta)^n$. Then for  $\hat{u}(x,t) = u(x,t) - [u]_{0, \rho}(t)$ with some $\rho \in (0,4)$, it holds that
\begin{align*}
& \int_{B_4} \hat{u}(x,t_1)^2 \eta_{0, \rho}(x)^2 dx + \nu \int_{Q_{4, \beta}} |\nabla \hat{u}(x,t)|^2 \eta_{0, \rho}(x)^2\chi(t)^2 \beta(x) dx dt\\
&\leq N \int_{Q_{4,\beta}} \hat{u}(x,t)^2\Big [|\nabla \eta_{0, \rho}(x)|^2 \beta(x) + |  \partial_t \chi(t)| \Big] \chi(t)dx dt \\
& \qquad +N  \int_{Q_{4,\beta}} |F(x,t)|^2 \eta_{0, \rho}(x)^2\chi(t)^2  \beta(x) dx dt,
\end{align*}
for a.e. $t_1 \in \Gamma_{\beta}(4) = (-16 \E_{\beta}(4), 0)$, and for every non-negative cut-off function $\chi \in C^\infty(\mathbb{R})$ satisfying $\chi =0$ near $-16 \E_{\beta}(4)$, $\chi(t_1) =1$, and $0 \leq \chi \leq 1$.
\end{lemma}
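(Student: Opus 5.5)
The plan is to run the standard energy argument with the test function $\varphi = \hat{u}\,\eta_{0,\rho}^2\chi^2$, cut off in time at $t_1$. The one new feature is the subtraction of the time-dependent weighted average $[u]_{0,\rho}(t)$. We rely on the observation from \cite{GS} that, with this particular average, the time-derivative term of the average drops out.

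First I would regularize. Since $u \in \W^{1,2}_{\textup{loc}}(Q_{4,\beta},\beta)$ only has $u_t$ in a dual space, I would work with Steklov averages $u_h$ in time, or mollify in $t$. The weak formulation then holds pointwise in $t$ with $\partial_t u_h$ integrable against $\varphi$. Note $\eta_{0,\rho}$ is supported in $B_\rho \subset B_4$, so $\varphi$ is compactly supported in space. Also $\chi$ vanishes near the bottom of $\Gamma_\beta(4)$, and I integrate only over $(-16\E_\beta(4), t_1)$.

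The time term is handled as follows. Because $[u]_{0,\rho}(t)$ does not depend on $x$, we have
\[
\int_{B_4} \partial_t u\, \hat{u}\,\eta_{0,\rho}^2\,dx = \frac12 \frac{d}{dt}\int_{B_4} \hat{u}^2\eta_{0,\rho}^2\,dx + \frac{d}{dt}[u]_{0,\rho}(t)\int_{B_4}\hat{u}\,\eta_{0,\rho}^2\,dx .
\]
By the definition \eqref{average-eta-weight}, $\int_{B_4} \hat{u}\,\eta_{0,\rho}^2\,dx = 0$, so the last term vanishes; this is why the $\eta^2$-weighted mean is used. After multiplying by $\chi^2$ and integrating up to $t_1$, the time contribution becomes $\frac12\int \hat u(x,t_1)^2\eta^2 dx$ minus $\int\hat u^2\eta^2\chi\partial_t\chi$. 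The second piece is bounded by the unweighted term $N\int \hat u^2 |\partial_t\chi|\chi$. It is unweighted because $u_t$ carries no weight in \eqref{main-eqn}.

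For the elliptic and forcing terms, note $\nabla \hat u = \nabla u$. Then \eqref{ellip-cond} gives
\[
\int \langle \bA\nabla u,\nabla u\rangle\eta^2\chi^2 \ge \nu\int|\nabla\hat u|^2\eta^2\chi^2\beta .
\]
The cross term $2\int\langle\bA\nabla u,\nabla\eta\rangle\hat u\,\eta\chi^2$ is at most $2\nu^{-1}\int\beta|\nabla u|\,|\nabla\eta|\,|\hat u|\,\eta\chi^2$. By Young's inequality this splits into $\frac{\nu}{4}\int|\nabla u|^2\eta^2\chi^2\beta$ plus $N\int\hat u^2|\nabla\eta|^2\beta\chi^2$. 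The forcing term $\int\beta\langle F,\nabla(\hat u\eta^2\chi^2)\rangle$ is treated the same way, giving $\frac{\nu}{4}\int|\nabla u|^2\eta^2\chi^2\beta$ plus $N\int |F|^2\eta^2\chi^2\beta$ plus $N\int \hat u^2|\nabla\eta|^2\beta\chi^2$. Absorbing the gradient terms and using $\chi^2\le\chi$ yields the stated inequality with $N=N(n,\nu)$.

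The main obstacle is justifying the computation at the level of regularity available. One must check that the Steklov-averaged identity passes to the limit $h\to0$ for a.e.\ $t_1$. This includes the identity $\frac{d}{dt}[u]_{0,\rho}$ paired with a zero-mean quantity, which holds for $u_h$ and then in the limit. The second place to check is that $\hat u\,\eta^2\chi^2$ is an admissible test function. This uses $u\in L^2$ unweighted locally, which follows from Lemma \ref{imbed-epsilon-0309} via $W^{1,2}(B,\beta)\subset L^2(B)$ and makes the time pairing with $W^{-1,2}(B,\beta)$ meaningful.
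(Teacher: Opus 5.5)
Your proposal is correct and is essentially the paper's proof. Both arguments are the Giaquinta--Struwe energy estimate: the $\eta_{0,\rho}^2$-weighted mean is chosen so that $\int_{B_4}\hat u\,\eta_{0,\rho}^2\,dx=0$ removes the contribution of $\partial_t[u]_{0,\rho}$, and then the same Young-inequality absorption follows, with Lemma \ref{imbed-epsilon-0309} guaranteeing that $\hat u$ lies in unweighted $L^2$. The only difference is bookkeeping: the paper tests with $u\,\eta_{0,\rho}^2\chi^2$ and removes the mean by combining the splitting \eqref{average-indentity-eta} with the ODE \eqref{eta-diff-0311} for $[u]_{0,\rho}(t)$, whereas you test with $\hat u\,\eta_{0,\rho}^2\chi^2$ directly, which reaches the same identity more quickly.
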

\begin{proof} From Lemma \ref{scaling-lemma}, we can assume without loss of generality that $\E_{\beta}(4) =1$.  Note that it follows from the definition of $\W^{1,2}(Q_{4}, \beta)$ and Lemma \ref{imbed-wei-0309} that $u(x,t) - [u]_{0, \rho}(t) \in L^2(Q_{4,\beta})$. Therefore, the right hand side in the assertion estimate of the lemma is well-defined.  Let us denote $\varphi(x,t) = \chi(t)^2 \eta_{0, \rho}(x)^2$. We note from definition of $[u]_{0, \rho}(t)$ in \eqref{average-eta-weight} that
\begin{equation} \label{cancel-eta-average}
\int_{B_4} \hat{u}(x,t)  \varphi(x,t)^2 dx = \chi(t)^2\int_{B_4} \hat{u}(x,t)   \eta_{0, \rho}(x)^2 dx =0.
\end{equation}
Then, it follows that
\begin{align} \notag
& \frac{1}{2}\int_{B_4} |u(x,t)|^2\eta_{0, \rho}(x)^2 dx =  \frac{1}{2} \int_{B_4} |\hat{u}(x,t) + [u]_{0, \rho}(t)|^2\eta_{0, \rho}(x)^2 dx\\ \label{average-indentity-eta}
&=  \frac{1}{2} \int_{B_4} |\hat{u}(x,t)|^2 \eta_{0, \rho}(x)^2 dx 
+  \frac{1}{2}[u]_{0, \rho}(t)^2\int_{B_4} \eta_{0, \rho}(x)^2 dx,
\end{align}
for all a.e. $t \in (-16, 0).$ On the other hand, as in \cite{GS}, it follows from the weak form of the PDE of $u$ and the definition of $[u]_{0, \rho}(t)$ that
\begin{align} \notag
 \partial_t [u]_{0, \rho}(t)   = -\frac{1}{|B_\rho|}\int_{B_\rho} &\Big[ \wei{\bA(x,t) \nabla u(x,t), \nabla [\eta_{0, \rho}(x)^2]}  \\ \label{eta-diff-0311}
& \qquad + \wei{F(x,t), \nabla [\eta_{0, \rho}(x)^2]} \beta(x) \Big] dx.
\end{align}
Then, by using \eqref{cancel-eta-average}, we infer that
\begin{align*}
& \int_{-16}^{t_1} \int_{B_4} u(x,t)^2 \eta_{0,\rho}(x)^2 \partial_t[\chi(t)^2] dxdt \\
& = \int_{-16}^{t_1} \int_{B_4} [\hat{u}(x,t)^2 + u_{0,\rho}(t)]^2 \eta_{0,\rho}(x)^2 \partial_t[\chi(t)^2] dxdt \\
& = \int_{-16}^{t_1} \int_{B_4} \hat{u}(x,t)^2 \eta_{0,\rho}(x)^2 \partial_t[\chi(t)^2] dxdt + \int_{-16}^{t_1} \int_{B_4} [u]_{0, \rho}(t)^2 \eta_{0,\rho}(x)^2 \partial_t[\chi(t)^2] dxdt. 
\end{align*}
From this, and by applying the integration by parts with respect to the time integration on the last term on the right hand side and the assumption that $\chi(-16) =0$, we obtain
\begin{align} \notag
&\frac{1}{2}\int_{-16}^{t_1} \int_{B_4} u(x,t)^2 \eta_{0,\rho}(x)^2 \partial_t[\chi(t)^2] dxdt = \frac{1}{2}\int_{-16}^{t_1} \int_{B_4} \hat{u}(x,t)^2 \eta_{0,\rho}(x)^2 \partial_t[\chi(t)^2] dxdt  \\ \notag
& \ + \frac{1}{2} [u]_{0, \rho}(t_1)^2 \chi(t_1)^2 \int_{B_4} \eta_{0, \rho}(x)^2 dx  -\int_{-16}^{t_1} \chi(t)^2 [u]_{0, \rho}(t) \partial_t [u]_{0, \rho}(t) \int_{B_4} \eta_{0, \rho}(x)^2 dx  \\  \notag 
& = \frac{1}{2}\int_{-16}^{t_1} \int_{B_4} \hat{u}(x,t)^2 \eta_{0,\rho}(x)^2 \partial_t[\chi(t)^2] dxdt  + \frac{1}{2} [u]_{0, \rho}(t_1)^2 \chi(t_1)^2 \int_{B_4} \eta_{0, \rho}(x)^2 dx \\  \notag 
& \qquad + \int_{-16}^{t_1}\int_{B_4} \Big[ \wei{\bA(x,t) \nabla u(x,t), \nabla [\eta_{0, \rho}(x)^2]}  +  \\ \label{int-eta-time-0311}
& \qquad  \qquad \qquad \qquad  + \wei{F(x,t), \nabla [\eta_{0, \rho}(x)^2]} \beta(x) \Big] \chi(t)^2 [u]_{0, \rho}(t)  dxdt,
\end{align}
where we used \eqref{eta-diff-0311} in our last step.

\smallskip
Now, with $\varphi(x,t) = \chi(t)^2 \eta_{0, \rho}(x)^2$, and by using Steklov's average if needed (see \cite[p. 18]{DiB} for example), we can formally test the equation \eqref{eqn-Q_R} with $u\varphi^2$ to obtain
\begin{align*}
& \frac{1}{2}\int_{B_4} u(x,t_1)^2 \varphi(x,t_1)^2 dx + \int_{-16}^{t_1}\int_{B_{4}} \wei{\bA(x,t) \nabla u(x,t), \nabla u(x,t)} \varphi(x,t)^2 dx dt\\
& = -\int_{-16}^{t_1}\chi(t)^2\int_{B_4}\Big[ \wei{\bA \nabla u, \nabla [\eta_{0, \rho}(x)^2]}  +  \wei{F, \nabla [\eta_{0, \rho}(x)^2]} \beta(x)  \Big] u(x,t)dx dt \\
& \qquad + \frac{1}{2}\int_{-16}^{t_1}\partial_t[ \chi(t)^2]\int_{B_4} u(x,t)^2 \eta_{0, \rho}(x)^2  dx dt + \int_{-16}^{t_1}\int_{B_4}  \varphi^2 \wei{F, \nabla u} \beta(x) dxdt.
\end{align*}
From this last estimate, we use \eqref{average-indentity-eta} and \eqref{int-eta-time-0311} to treat the first term on the left hand side and the third term on the right hand side of the last estimate. By doing this, and then rearranging terms, we get
\begin{align*}
& \frac{1}{2}\int_{B_4} \hat{u}(x,t_1)^2 \varphi(x,t_1)^2 dx + \int_{-16}^{t_1}\int_{B_4} \wei{\bA(x,t) \nabla \hat{u}(x,t), \nabla \hat{u}(x,t)} \varphi(x,t)^2 dx dt\\
& = -\int_{-16}^{t_1}\int_{B_4}\Big[ \wei{\bA \nabla u, \nabla [\eta_{0, \rho}(x)^2]}  +  \wei{F, \nabla [\eta_{0, \rho}(x)^2]} \beta(x)  \Big] \hat{u}(x,t)\chi(t)^2 dx dt \\
& \qquad + \frac{1}{2}\int_{-a}^{t_1} \int_{B_4} \hat{u}(x,t)^2 \eta_{0,\rho}(x)^2 \partial_t[\chi(t)^2] dxdt  +   \int_{-16}^{t_1}\int_{B_4}  \varphi^2 \wei{F, \nabla \hat{u}} \beta(x) dxdt.
\end{align*}
From this, the conditions on $\bA$ in \eqref{ellip-cond}, we can follow the standard energy estimates, using H\"{o}lder's inequality and Young's inequality to derive the estimate
\begin{align*}
& \int_{B_4} \hat{u}(x,t_1)^2 \eta_{0, \rho}(x)^2 dx + \nu \int_{-16}^{t_1}\int_{B_4} |\nabla \hat{u}(x,t)|^2 \eta_{0, \rho}(x)^2\chi(t)^2 \beta(x) dx dt\\
&\leq N \int_{Q_{4,\beta}} \hat{u}(x,t)^2\Big [|\nabla \eta_{0, \rho}(x)|^2 \beta(x) + |  \partial_t \chi(t)| \Big] \chi(t)dx dt \\
& \qquad +N  \int_{Q_{4,\beta}} |F(x,t)|^2 \eta_{0, \rho}(x)^2\chi(t)^2  \beta(x) dx dt.
\end{align*}
for $N = N(n, \nu)>0$. The lemma is then proved.
\end{proof}

\smallskip
From now on, for a given number $M_0 \geq 1$, let $\gamma = \gamma(n, M_0)$ be the sufficently small number defined in Lemma \ref{L-q-2-wei} in which $q = 1+\frac{1}{n_0}$ for $n_0 = \max\{n-1, 1\}$. Then, let us define
\begin{equation} \label{p-zero-def}
p_0 = \min \Big\{ \frac{2(1+\gamma)}{2+ \gamma}, \frac{2}{1+\frac{1}{n_0} -\gamma} \Big\} \in (1, 2).
\end{equation}
Our next lemma is on the Poincar\'{e} type inequality for the weak solution of the equation \eqref{eqn-Q_R}.
\begin{lemma} \label{L-2-inter-u-est} For every $\nu \in (0,1)$ and $M_0 \geq 1$, there is $N = N(n, \nu, M_0)>0$ such that the following assertion holds. Suppose $u \in \W^{1,2}(Q_{4, \beta}, \beta)$ is a weak solution of \eqref{eqn-Q_R} in which the matrix $\bA$ satisfies \eqref{ellip-cond} in $Q_{4,\beta}$ and its corresponding $\beta$ satisfies \eqref{beta-cond}, and $F \in L^2(Q_{4,\beta}, \beta)^n$. Then
\begin{equation} \label{L-2-inter-u-est-01-07}
\begin{split}
& \Big( \fint_{Q_{4, \beta}} | u (x,t) - (u)_{Q_{4,\beta}}|^{p_0} dx dt \Big)^{\frac{1}{p_0}}  \\
 & \leq N \Big(\fint_{Q_{4, \beta}}\Big[|\nabla u(x,t)|^2\beta(dx dt) \Big)^{\frac{1}{2}} + N  \Big(\fint_{Q_{4, \beta}} |F(x,t)|^2  \Big] \beta(dx dt) \Big)^{\frac{1}{2}}.
 \end{split}
\end{equation}
\end{lemma}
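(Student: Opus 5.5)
We normalize first. By the scaling in Lemma \ref{scaling-lemma} (the time dilation \eqref{scale-2} combined with the spatial one), we may assume $\E_{\beta}(4)=1$, so $Q_{4,\beta}=B_4\times(-16,0]$. The ratio $\beta(Q_{4,\beta})/|Q_{4,\beta}| = (\beta)_{B_4}$ then lies in $[1,M_0]$, because $1\le (\beta)_{B_4}\E_\beta(4)\le M_0$. Hence weighted and unweighted averages over $Q_{4,\beta}$ differ only by constants depending on $n,M_0$.

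We split $u-(u)_{Q_{4,\beta}}$ with the $\eta$-weighted spatial mean $[u]_{0,\rho}(t)$, taking $\rho=4$:
\[
u-(u)_{Q_{4,\beta}} = \big(u-[u]_{0,4}(t)\big) + \big([u]_{0,4}(t)-c\big) + \big(c-(u)_{Q_{4,\beta}}\big), \qquad c=\fint_{-16}^0 [u]_{0,4}(t)\,dt .
\]
Since subtracting $(u)_{Q}$ is nearly optimal in $L^{p_0}$, the last term is controlled by the first two. So it suffices to bound the first two terms in $L^{p_0}(Q_{4,\beta})$.

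\textbf{Spatial term.} For a.e. $t$, Lemma \ref{L-q-2-wei}(i) with $q=1+\frac1{n_0}$ and $p=2$ gives the exponent $\frac{2}{q-\gamma}\ge p_0$, by the choice \eqref{p-zero-def}. This bounds $\big(\fint_{B_4}|u-[u]_{0,4}|^{p_0}dx\big)^{1/p_0}$ by $N\big(\frac{1}{\beta(B_4)}\int_{B_4}|u-[u]_{0,4}|^2\beta\,dx\big)^{1/2}$. Lemma \ref{Sobolev-Poincare}(i) with $\rho=1$, $r=4$ then bounds this by $N\big(\fint_{B_4}|\nabla u|^2\beta(dx)\big)^{1/2}$. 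Alternatively one can use Lemma \ref{imbed-wei-0309}(i) directly. Integrating in $t$ and using Hölder ($p_0<2$) yields the bound by the weighted gradient average over $Q_{4,\beta}$.

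\textbf{Temporal term (main obstacle).} This needs the equation. By \eqref{eta-diff-0311}, $[u]_{0,4}$ is absolutely continuous in $t$ with
\[
|\partial_t[u]_{0,4}(t)|\le \frac{N}{|B_4|}\int_{B_4}\big(|\bA\nabla u|+|F|\beta\big)|\nabla\eta_{0,4}^2|\,dx \le N\fint_{B_4}\big(|\nabla u|+|F|\big)\beta\,dx,
\]
where we used \eqref{ellip-cond}. Therefore
\[
\sup_{t,s}\big|[u]_{0,4}(t)-[u]_{0,4}(s)\big|\le N\int_{-16}^0\fint_{B_4}\big(|\nabla u|+|F|\big)\beta\,dx\,dt .
\]
By Cauchy–Schwarz in $\beta\,dx\,dt$, the right side is at most $N(\beta)_{B_4}^{1/2}\big(\int\!\fint(|\nabla u|^2+|F|^2)\beta\big)^{1/2}$. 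Since $(\beta)_{B_4}\le M_0$ and $|\Gamma_\beta(4)|=16$, this is comparable to the weighted $L^2$ averages on the right of \eqref{L-2-inter-u-est-01-07}. The key point is that the normalization $\E_\beta(4)=1$ makes the time length and $(\beta)_{B_4}$ both bounded, so the time scale is matched. Without this normalization a factor $\E_\beta(4)(\beta)_{B_4}$ would appear, and it is bounded by $M_0$ precisely because $\beta\in A_{1+1/n_0}$. One could also use Lemma \ref{L-q-2-wei}(ii) to handle $|\nabla u|\beta$ in $L^{q_0}$; this is the reason for the first entry in \eqref{p-zero-def}. For rigor, the time derivative of $[u]_{0,4}$ is justified by testing with $\eta_{0,4}^2\psi(t)$ as in Lemma \ref{Caccio-1}.

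Summing the three terms gives \eqref{L-2-inter-u-est-01-07}, and undoing the scaling preserves its form because both sides are scale invariant.
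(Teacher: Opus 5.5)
Your argument is correct, and it follows a genuinely different route from the paper's.

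The paper proves the lemma by contradiction and compactness. After normalizing $\E_\beta(4)=1$, it takes sequences $\bA_k,\beta_k,F_k,u_k$ with $\|u_k-(u_k)_{Q_4}\|_{L^{p_0}(Q_4)}=1$ and weighted gradient and forcing norms at most $1/k$. It bounds $\nabla u_k$ in $L^2_tL^{\bar p_0}_x$ via Lemma \ref{L-q-2-wei}(i), and bounds $\beta_k\nabla u_k$ via Lemma \ref{L-q-2-wei}(ii), hence $\partial_t u_k$ in $L^2_tW^{-1,q_0}_x$. Aubin--Lions then gives a strongly convergent subsequence whose limit has $\nabla u=0$, $\partial_t u=0$ and zero mean, contradicting the unit norm.

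You instead give a direct quantitative proof with two ingredients. The first is a slice-wise weighted Poincar\'e inequality about the $\eta$-mean $[u]_{0,4}(t)$: Lemma \ref{Sobolev-Poincare}(i) followed by Lemma \ref{L-q-2-wei}(i), which is precisely where the second entry of \eqref{p-zero-def} is used. The second is control of the time oscillation of $[u]_{0,4}$ through the identity \eqref{eta-diff-0311}, obtained by testing with $\eta_{0,4}^2\psi(t)$. There Cauchy--Schwarz produces the factor $(\beta)_{B_4}\E_\beta(4)\le M_0$, which reconciles the unweighted time derivative with the weighted diffusion; as you note, this even makes the time-dilation normalization dispensable.

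Your route has several advantages:
\begin{itemize}
\item it gives an explicit constant;
\item it needs neither Aubin--Lions nor Lemma \ref{L-q-2-wei}(ii), and no passage to the limit along a sequence of varying weights $\beta_k$;
\item it shows transparently where the $A_{1+\frac{1}{n_0}}$ condition and the cylinder height $\E_\beta$ enter;
\item it yields more than the statement: the oscillation of $[u]_{0,4}$ is controlled in $L^\infty$ in time, and using Lemma \ref{imbed-wei-0309}(i) for the spatial part gives the estimate with $p_0$ replaced by $2$ on the left-hand side.
\end{itemize}
The paper's compactness proof gives only a non-explicit $N$. Its advantage is that it is the same template the paper reuses in Lemma \ref{L2-comparision}, so it fits the overall structure of the paper.
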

\begin{proof} Note that we only need to prove the assertion \eqref{L-2-inter-u-est-01-07} for the class of non-constant solutions. Also, by using Lemma \ref{scaling-lemma}, it is sufficient to consider the case that
\[
\fint_{B_4} \beta(x)^{-n_0} dx =1 \quad \text{where} \quad n_0 =\max\{1, n-1\}.
\]
In this case, we have $Q_{4, \beta} = Q_4$, and
\begin{equation*} 
1 \leq \Big(\fint_{B_4} \beta(x) dx \Big) \Big(\fint_{B_4} \beta(x)^{-n_0} \Big)^{\frac{1}{n_0}} =  \fint_{B_4} \beta (x) dx \leq M_0.
\end{equation*}
We now prove \eqref{L-2-inter-u-est-01-07} by a contradiction argument. Assume that the assertion \eqref{L-2-inter-u-est-01-07} is not true. Then, for every $k \in \mathbb{N}$, there are $\bA_k$ satisfying \eqref{ellip-cond} in $Q_{4}$ with its corresponding $\beta_k$ satisfying 
\begin{equation} \label{beta-k-all-contradiction-0702}
[\beta_k]_{A_{1+\frac{1}{n_0}}} \leq M_0, \quad \fint_{B_4} \beta_k(x)^{-n_0} dx =1, \quad \text{and} \quad  1 \leq \fint_{B_4} \beta_k (x) dx \leq M_0.
\end{equation}
In addition, there are a sequence of vector fields $\{F_k\} \subset  L^2(Q_4, \beta_k)^n$ and a sequence of weak non-constant solutions $\{u_k\} \subset \W^{1,2}(Q_4, \beta_k)$ such that for each $k \in \mathbb{N}$, $u_k$ weakly solves
\begin{equation} \label{u-k-weak-07-02}
\partial_t u_k - \text{div}[\bA_k(x,t) \nabla u_k] = \text{div}[\beta_k(x) F_k(x,t)] \quad \text{in} \quad Q_4
\end{equation}
and it satisfies
\[
 \|u_k - (u_k)_{Q_4}\|_{L^{p_0}(Q_4)} \geq k \Big[\|\nabla u_k\|_{L^2(Q_4, \beta_k)} + \|F_k\|_{L^2(Q_4, \beta_k)}  \Big].
\]
By dividing the PDE of $u_k$ by the non-zero number $\|u_k - (u_k)_{Q_4}\|_{L^{p_0}(Q_1)}$, we can assume without loss of generality that 
\begin{equation} \label{u-k-L2-est-0701}
\|u_k - (u_k)_{Q_4}\|_{L^{p_0}(Q_4)} =1, \quad \forall \ k \in \mathbb{N}.
\end{equation}
It then follows that
\begin{equation} \label{grad-u-k-F-k-est-0702}
\|\nabla u_k\|_{L^2(Q_4, \beta_k)} + \|F_k\|_{L^2(Q_4, \beta_k)} \leq \frac{1}{k}, \quad \forall \ k \ \in \mathbb{N}.
\end{equation}
Now, we apply Lemma \ref{L-q-2-wei}-(i) and  \eqref{beta-k-all-contradiction-0702} to find a sufficiently small constant $\gamma = \gamma(n, M_0) \in (0,1)$ such that
\begin{equation} \label{grad-u-k-F-k-est-07-01}
\|\nabla u_k\|_{L^2(-16, 0), L^{\bar{p}_0}(B_4)} + \|F_k\|_{L^2((-16, 0), L^{\bar{p}_0}(B_4)} \leq \frac{N}{k}, \quad \forall \ k \in \mathbb{N},
\end{equation}
where $\bar{p}_0 = \frac{2}{1+\frac{1}{n_0} -\gamma} \in (1,2)$. In addition, for $q_0 = \frac{2(1+\gamma)}{2+\gamma} \in (1,2)$, it follows from Lemma \ref{L-q-2-wei}-(ii) and \eqref{beta-k-all-contradiction-0702} that there is $N = N(n, M_0) >0$ so that
\[
\|\nabla u_k \beta_k\|_{L^2((-16, 0), L^{q_0}(B_4))} + \|F_k \beta_k\|_{L^2((-16, 0), L^{q_0}(B_4))}
\leq \frac{N}{\sqrt{k}}, \quad \forall \ k \in \mathbb{N}.
\]
As the result of of this estimate, the PDE of $u_k$, the estimates \eqref{u-k-L2-est-0701} and \eqref{grad-u-k-F-k-est-07-01}, there is $N = N(n, \nu, M_0)>0$ such that
\[
\|\bar{u}_k\|_{L^{2}((-16, 0), W^{1, \bar{p}_0}(B_4))}  \leq N, \quad \|\partial_t \bar{u}_k\|_{L^{2}((-16,0), W^{-1, q_0}(B_4)} \leq N, \quad \forall \ k \in \mathbb{N}
\]
for $\bar{u}_k = u_k - (u_k)_{Q_1}$. Then, by applying the Aubin-Lions theorem, see \cite{Simon}, and passing through a subsequence if needed, we can find $u \in \W^{1, p_0}(Q_1)$ so that
\begin{equation} \label{uk-convergence-0702}
\begin{array}{lll}
 \bar{u}_k \rightarrow u &\quad \text{strongly in} & \quad L^{p_0}((-16, 0), L^{p_0}(B_4)),\\
\nabla \bar{u}_k \rightharpoonup \nabla u  & \quad \text{weakly in} & \quad  L^{p_0}((-16, 0), W^{1,p_0}(B_4)),\\
\partial_t \bar{u}_k \rightharpoonup \partial_t u  & \quad \text{weakly in} & \quad  L^{p_0}((-16, 0), W^{-1,p_0}(B_4)),
\end{array}
\end{equation}
as $k \rightarrow \infty$, where $p_0 = \min\{\bar{p}_0, q_0\} \in (1, 2)$ as in \eqref{p-zero-def}. 

\smallskip
Next, for each $\varphi \in C^\infty(Q_4)$ vanishing near $\partial' Q_1$, as $\bar{u}_k$ is also a weak solution of the same equation \eqref{u-k-weak-07-02} of $u_k$, we have
\[
-\int_{Q_4} \bar{u}_k (x,t)\partial_t \varphi(x,t) dx dt + \int_{Q_4} \wei{\bA_k(x,t) \nabla u_k, \nabla \varphi} dx dt = - \int_{Q_4} \wei{F_k, \nabla \varphi} \beta_k(x) dx dt, 
\]
Then, by using \eqref{grad-u-k-F-k-est-0702}, the convergences in \eqref{uk-convergence-0702}, and by passing through the limit as $k \rightarrow \infty$ of the last identity, we see that $u$ is a weak solution of the equation
\[
\partial_t u =0 \quad \text{in} \quad Q_4.
\]
On the other hand, it follows from \eqref{grad-u-k-F-k-est-07-01}, the definition of $\bar{u}_k$, and the first convergence in \eqref{uk-convergence-0702} that
\[
\nabla u =0 \quad \text{in} \quad Q_4, \quad \text{and} \quad (u)_{Q_4} =0.
\]
Therefore, $u \equiv 0$ in $Q_4$. This contradicts to the fact that 
\[ 1 = \lim_{k \rightarrow \infty}\|\bar{u}_k\|_{L^{p_0}(Q_4)} = \|u\|_{L^{p_0}(Q_4)}.
\]
The proof of the lemma is then completed.
\end{proof}
\subsection{Interior approximation estimates and proof of Theorem \ref{inter-theorem}} \label{inter-appr-est}  We use the freezing-coefficients technique to locally approximate solutions of \eqref{eqn-Q_R} by solutions of the frozen coefficient equation:
\begin{equation} \label{v-Qr-sol}
v_t - \textup{div} ((\bA)_{B_{4}}(t) \nabla v)  = 0 \quad \text{in} \quad Q_{4, \beta}.
\end{equation}
Due to Lemma \ref{Lipschitz-constant}, we see that each weak solution $v \in \W^{1,p_0}(Q_{4,\beta})$ of \eqref{v-Qr-sol} is sufficiently smooth. Our goal is to show that $\nabla u$ is sufficiently close to $\nabla v$ in $L^2(Q_{2,\beta}, \beta)$ when $\bA(x,t)$ is sufficiently close to $(\bA)_{B_4}(t)$. 

\smallskip
We  begin with the following important lemma on the Caccioppoli type estimates for the difference of two solutions of \eqref{eqn-Q_R} and \eqref{v-Qr-sol}.
\begin{lemma} \label{compare-lemma-1} Let $\nu \in (0,1)$, $M_0 \geq 1$ and $F \in L^2_{\textup{loc}}(Q_{4,\beta})^n$. Suppose that the matrix $\bA$ satisfies \eqref{ellip-cond} in $Q_{4, \beta}$ and its corresponding $\beta$ satisfies \eqref{beta-cond}. Suppose also that $u \in \W^{1,2}(Q_{4,\beta}, \beta)$ is a weak solution of \eqref{eqn-Q_R},  and $v \in \W^{1,p_0}_{\textup{loc}}(Q_{4,\beta})$ is a weak solution of \eqref{v-Qr-sol} with some $p_0 \in (1, 2)$.  Then, for  $w = u -v$, and $\hat{w}(x,t) = w(x,t) - [w]_{0, \rho}(t)$ with some $\rho \in (0,4]$, it holds that
\begin{align*}
&\int_{Q_{4, \beta}} |\nabla w|^2 \varphi(x,t)^2 \beta(x)\, dx dt \leq   N\int_{Q_4}  \hat{w}^2 \big( |\varphi \varphi_t| +|\nabla \varphi|^2 \beta(x) \big) dxdt\\
& + N \int_{Q_{4, \beta}} |F(x,t)|^2 \beta(x)\varphi(x,t)^2 dx dt +   N \|\varphi \nabla v\|_{L^\infty(Q_{4, \beta})}^2 \int_{Q_{4, \beta}} \Theta_{\bA, 4}(x,t)^2  \beta(x)dx dt,
\end{align*}
for every $\varphi(x,t) = \eta_{0, \rho}(x) \chi(t)$ with $\chi \in C^\infty(\Gamma_{\beta}(4))$ is the standard non-negative cut-off function vanishing near $- 16 \E_{\beta}(4)$, where $N = N(n, \nu)>0$.
\end{lemma}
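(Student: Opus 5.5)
First I would derive the equation satisfied by $w = u - v$. Subtracting \eqref{v-Qr-sol} from \eqref{eqn-Q_R} gives
\[
w_t - \textup{div}(\bA(x,t)\nabla w) = \textup{div}\big(\beta(x) F + [\bA(x,t) - (\bA)_{B_4}(t)]\nabla v\big) \quad \text{in} \quad Q_{4,\beta}.
\]
Set $G := F + \beta^{-1}[\bA - (\bA)_{B_4}]\nabla v$. The equation then takes the form of \eqref{eqn-Q_R} with forcing $\beta G$. Pointwise, $|G| \le |F| + \Theta_{\bA,4}|\nabla v|$, and hence
\[
|G|^2\beta\varphi^2 \le 2|F|^2\beta\varphi^2 + 2\|\varphi\nabla v\|_{L^\infty}^2\Theta_{\bA,4}^2\beta .
\]
This produces exactly the last two terms on the right-hand side of the claim. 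The plan is therefore to rerun the proof of Lemma \ref{Caccio-1} for $w$, with $F$ replaced by $G$ and $\eta_{0,\rho}\chi$ written as $\varphi$.

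Before doing so I must justify that $w$ is a legitimate test object. By Lemma \ref{Lipschitz-constant}, applied after rescaling on subcylinders, $v \in \W^{1,2}_{\textup{loc}}$ and $\nabla v \in L^\infty_{\textup{loc}}$. Since $\varphi$ has compact support in space (through $\eta_{0,\rho}$), only interior regularity of $v$ is needed. In time, $\chi$ vanishes near the bottom, and any truncation at the top is handled by the time $t_1$ in the energy identity. Locally $\beta \in L^1$, and $\nabla v$ is bounded on $\mathrm{supp}\,\varphi$, so $\nabla v \in L^2(\mathrm{supp}\,\varphi, \beta)$. Combined with $u \in \W^{1,2}(Q_{4,\beta},\beta)$, this gives $\varphi\nabla w \in L^2(\beta)$. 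For the time derivative of $v$, I use that $v$ is smooth in $x$ and that $v_t = \textup{div}((\bA)_{B_4}\nabla v)$, which is locally in $L^2$. Steklov averaging, as in Lemma \ref{Caccio-1}, then makes the formal testing with $\hat w\varphi^2$ rigorous.

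With that in place, I would carry out the energy argument as in Lemma \ref{Caccio-1}.

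First, the identity \eqref{eta-diff-0311} for $\partial_t [w]_{0,\rho}(t)$ carries over with $\bA\nabla w$ and $\beta G$ in place of $\bA\nabla u$ and $\beta F$. The cancellation \eqref{cancel-eta-average} removes the mean, so the time-derivative term reduces to $\tfrac12\int\hat w^2\eta^2\partial_t\chi^2$, which is bounded by $N\int \hat w^2|\varphi\varphi_t|$. The supremum term at $t_1$ is nonnegative and I discard it.

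Second, ellipticity \eqref{ellip-cond} gives the coercive term $\nu\int|\nabla w|^2\varphi^2\beta$, noting $\nabla\hat w = \nabla w$.

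Third, the cross terms are absorbed. I write $\int \langle \bA\nabla w,\nabla\varphi^2\rangle \hat w$ and bound it using $|\bA| \le \nu^{-1}\beta$ and Young's inequality by
\[
\epsilon\int|\nabla w|^2\varphi^2\beta + N_\epsilon\int\hat w^2|\nabla\varphi|^2\beta .
\]
The forcing terms $\int \langle G,\nabla\varphi^2\rangle \hat w\beta$ and $\int\varphi^2\langle G,\nabla w\rangle\beta$ are bounded similarly, by $\epsilon\int|\nabla w|^2\varphi^2\beta + N\int\hat w^2|\nabla\varphi|^2\beta + N\int|G|^2\varphi^2\beta$.

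Taking $\epsilon$ small and integrating over $t_1$ (or letting $t_1 \to 0$) gives the estimate with $N = N(n,\nu)$. Inserting the pointwise bound on $|G|^2$ completes the proof.

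The main obstacle is the justification step, not the algebra. The coefficient $(\bA)_{B_4}(t)$ is comparable to $(\beta)_{B_4}$ rather than to $\beta(x)$, so $v$ lives in unweighted spaces and $w$ mixes the weighted and unweighted settings. I expect Lemma \ref{Lipschitz-constant}, together with the compact spatial support of $\varphi$, to suffice. A related point is that the term $\int\hat w^2|\varphi\varphi_t|$ is unweighted and must be finite. This follows from Lemma \ref{imbed-wei-0309} for the $u$ part and from the local boundedness of $v$ for the $v$ part.
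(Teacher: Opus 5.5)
Your proposal is correct and follows the paper's proof. You rewrite the equation for $w=u-v$ with forcing $\beta G$, where $G=F+\beta^{-1}(\bA-(\bA)_{B_4})\nabla v$; you bound $\int|G|^2\varphi^2\beta$ pointwise by $2|F|^2\beta\varphi^2+2\|\varphi\nabla v\|_{L^\infty}^2\Theta_{\bA,4}^2\beta$ (with the correct power of $\beta$); and you run the Caccioppoli argument of Lemma \ref{Caccio-1} for $w$, which with $\varphi=\eta_{0,\rho}\chi$ gives exactly the stated right-hand side.
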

\begin{proof} From Lemma \ref{Lipschitz-constant}, it follows that that $\nabla v \in L^\infty_{\text{loc}}(Q_{4,\beta})$ and therefore $\nabla v \in L^2_{\text{loc}}(Q_{4,\beta}, \beta)$. Consequently, if $u \in \W^{1,2}(Q_{4, \beta}, \beta)$ is a weak solution to \eqref{eqn-Q_R} in $Q_{4, \beta}$, it follows from Lemma \ref{L-q-2-wei}-(i) that $w = u - v \in \W^{1,2}_{\text{loc}}(Q_{4, \beta}, \beta)$ is a weak solution of
\begin{align*}
w_t  - \textup{div}( \bA \nabla w) & = \textup{div}(\beta(x) G(x,t)) \quad \text{in} \quad Q_{4,\beta}.
\end{align*}
where
\[
G(x,t) = F(x,t) + \beta(x)^{-1}(\bA(x,t) - (\bA)_{B_{4}}(t))\nabla v(x,t) \in L^2_{\text{loc}}(Q_{4,\beta}, \beta).
\]
Note that from the triangle inequality, we have
\begin{align*}
& \int_{Q_{4,\beta}} |G(x,t)|^2 \varphi(x,t)^2  \beta(x) dx dt  \leq \int_{Q_{4,\beta}} |F(x,t)|^2 \varphi(x,t)^2  \beta(x) dx dt \\
&\qquad \qquad + \|\varphi \nabla v \|_{L^\infty(Q_{4, \beta})}^2 \int_{Q_{4,\beta}} |\Theta_{\bA, 4}(x,t)|^2 \beta(x)^{-1} dx dt.
\end{align*}
Then, the assertion of the lemma follows  directly from Lemma \ref{Caccio-1}.
\end{proof}

\smallskip
The following lemma provides an important result on the closeness in $L^2(Q_{4,\beta}, \beta)$ of weak solutions of \eqref{eqn-Q_R} and \eqref{v-Qr-sol}. The proof of the lemma is the most technical one in the session.
\begin{lemma} \label{L2-comparision} For every $\nu \in (0,1)$, $M_0 \geq 1$, and $\bar{\epsilon} \in (0, 1)$, there exists a sufficiently small constant $\bar{\delta} =\bar{\delta}(n, \nu, M_0, \bar{\epsilon})>0$ such that the following assertions hold. Suppose that $\bA$ satisfies \eqref{ellip-cond} in $Q_{4, \beta}$ and its  corresponding $\beta$ satisfies \eqref{beta-cond}. Suppose also that
\begin{align*}
& \fint_{Q_{4,\beta}} \big[ \Theta_{\bA, 4}(x,t)^2  + |F(x,t)|^2\big]\beta(dxdt)  \leq \bar{\delta}^2.
\end{align*}
Then, for every weak solution $u \in \W^{1,2}(Q_{4,\beta},\beta)$ of \eqref{eqn-Q_R} satisfying
\[
 \fint_{Q_{4,\beta}} |\nabla u(x,t)|^2 \beta(dxdt) \leq 1,
\]
there exists a weak solution $v \in \W^{1,2}_{\textup{loc}}(Q_{4,\beta})$ of \eqref{v-Qr-sol} in $Q_{4,\beta}$ such that
\begin{equation} \label{L-2-w-small}
\left(\fint_{Q_{7/2, \beta}} |w(x,t) - [w]_{0,5/2}(t)|^2 \beta(dx dt)\right)^{1/2}  \leq \bar{\epsilon},
\end{equation}
for $w(x,t) = u(x,t) - (u)_{Q_4, \beta}- v(x,t)$. Moreover, 
\begin{equation} \label{L-2-mu}
\fint_{Q_{3,\beta}} |\nabla v(x,t)|^2 dx dt \leq N,
\end{equation}
with some constant $N = N(n, \nu, M_0)>0$.
\end{lemma}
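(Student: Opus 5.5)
The proof will go by contradiction and compactness, in the spirit of Lemma \ref{L-2-inter-u-est}. By Lemma \ref{scaling-lemma} we may normalize $\E_\beta(4)=1$. Then $Q_{4,\beta}=Q_4$ and $1\le (\beta)_{B_4}\le M_0$, and the same normalization holds for every $\beta_k$ in the sequence below.

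Suppose the lemma fails for some $\bar\epsilon$. Then for each $k$ there are $\bA_k$ and $\beta_k$ with $[\beta_k]_{A_{1+1/n_0}}\le M_0$, and $F_k$, $u_k$ with
\[
\fint_{Q_4}\big[\Theta_{\bA_k,4}^2+|F_k|^2\big]\beta_k(dxdt)\le k^{-2}, \qquad \fint_{Q_4}|\nabla u_k|^2\beta_k(dxdt)\le 1,
\]
and such that \eqref{L-2-w-small} fails for every weak solution $v$ of the frozen equation with coefficient $(\bA_k)_{B_4}(t)$. Set $\bar u_k=u_k-(u_k)_{Q_4,\beta_k}$. Lemma \ref{L-2-inter-u-est} gives a uniform $L^{p_0}$ bound for $u_k-(u_k)_{Q_4}$. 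The difference of the two averages is controlled because $\beta_k$ is doubling and satisfies the reverse H\"older inequality, together with Lemma \ref{L-q-2-wei}. As in Lemma \ref{L-2-inter-u-est}, Lemma \ref{L-q-2-wei} also yields uniform bounds for $\bar u_k$ in $L^2W^{1,\bar p_0}$ and for $\partial_t\bar u_k$ in $L^2W^{-1,q_0}$. The matrices $(\bA_k)_{B_4}(t)/(\beta_k)_{B_4}$ are uniformly elliptic and bounded, and $(\beta_k)_{B_4}\in[1,M_0]$. Passing to subsequences, we get $(\beta_k)_{B_4}\to b$ and $(\bA_k)_{B_4}\rightharpoonup \bar\bA$ weak-$*$ in $L^\infty$. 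By Aubin–Lions, $\bar u_k\to u$ strongly in $L^{p_0}$ and weakly in $\W^{1,p_0}$.

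Next I pass to the limit in the equation. Write $\bA_k\nabla u_k=(\bA_k)_{B_4}\nabla u_k+(\bA_k-(\bA_k)_{B_4})\nabla u_k$. The second term has $L^1$ norm at most $\|\Theta_{\bA_k,4}\|_{L^2(\beta_k)}\|\nabla u_k\|_{L^2(\beta_k)}\to0$. The forcing term goes to zero by Lemma \ref{L-q-2-wei}-(ii). The first term is a product of a weak-$*$ convergent factor depending only on $t$ and a weakly convergent factor, so it needs care. I would use the strong convergence of $\bar u_k$ and integrate by parts in $x$ against the test function: $\int (\bA_k)_{B_4}(t)\nabla \bar u_k\cdot\nabla\varphi=-\int \bar u_k\,(\bA_k)_{B_4}(t):D^2\varphi$, and the right side converges. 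Hence $u\in\W^{1,p_0}(Q_4)$ solves $u_t-\textup{div}(\bar\bA\nabla u)=0$. Let $v_k$ be the solution of the $k$-th frozen equation with the same initial/lateral data as $u$ on a slightly smaller cylinder (or simply $v_k=u$ if the limiting frozen matrix is used). Cleaner: solve the frozen equation with coefficient $(\bA_k)_{B_4}$ on $Q_{4}$ with data $u$, and show $v_k-u\to0$ in $L^{p_0}$, with Lipschitz bounds uniform in $k$ by Lemma \ref{Lipschitz-constant}. The bound \eqref{L-2-mu} follows from Lemma \ref{Lipschitz-constant} and the uniform $\W^{1,p_0}$ bound.

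The main obstacle is upgrading convergence to the weighted $L^2(Q_{7/2},\beta_k)$ norm of $w_k-[w_k]_{0,5/2}$, where $w_k=\bar u_k-v_k$. We only have unweighted $L^{p_0}$ convergence, and the weight changes with $k$. I would apply Lemma \ref{compare-lemma-1} with $\rho$ slightly larger than $7/2$ and a cutoff $\chi$. Its right side is $\int\hat w_k^2(|\varphi\varphi_t|+|\nabla\varphi|^2\beta_k)$ plus terms that are $o(1)$, since $\|\varphi\nabla v_k\|_\infty$ is uniformly bounded. This gives a uniform bound on $\nabla w_k$ in $L^2(\beta_k)$ on $Q_{7/2}$, using Lemma \ref{imbed-epsilon-0309} to control the unweighted $\hat w_k^2$ term. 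Then Lemma \ref{imbed-wei-0309}, Lemma \ref{Sobolev-Poincare} and Lemma \ref{L-q-2-wei}, which give $L^{2+\kappa}$-type higher integrability uniform in $k$, let me interpolate between the bounded higher norm and the vanishing $L^{p_0}$ norm. Combining the weighted Sobolev–Poincaré inequality in $x$ with the reverse H\"older inequality for $\beta_k$, we get $\|\hat w_k\|_{L^{2p}(\beta_k)}$ bounded with $p>1$. Since $\beta_k^{1+\gamma}$ is uniformly integrable, H\"older's inequality gives $\int\hat w_k^2\beta_k\le \|\hat w_k\|_{L^{s}}^{\theta}\cdot C$, and the $L^s$ factor vanishes. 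This contradicts the failure of \eqref{L-2-w-small} for large $k$.
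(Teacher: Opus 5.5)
Your proposal is essentially the paper's proof. It uses the same normalization $\E_\beta(4)=1$ and the same contradiction/Aubin--Lions scheme, and the same limit passage: weighted Cauchy--Schwarz for $(\bA_k-(\bA_k)_{B_4})\nabla u_k$, and integration by parts in $x$ for the $t$-only weak-$*$ part. Your ``data $u$'' option (not $v_k=u$, which solves the wrong equation) is exactly the paper's corrector: $u-v_k=g_k$ solves the $k$-th frozen problem with zero data and forcing $-\textup{div}\big(((\bA_k)_{B_4}-\bar\bA)\nabla u\big)$. It tends to $0$ in $L^{p_0}$ by compactness plus uniqueness for the limit problem; the $\W^{1,p_0}$ estimate alone does not give this, since the coefficients converge only weakly-$*$. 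The final step is the same interpolation between a uniform weighted higher-integrability bound for $\hat w_k$ and its vanishing unweighted $L^{p_0}$ norm.

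Minor points. First, the detour through Lemma \ref{compare-lemma-1} is unnecessary: $\nabla w_k$ is bounded in $L^2(Q_{7/2,\beta_k},\beta_k)$ directly by the triangle inequality and the uniform Lipschitz bound on $v_k$. Second, obtaining \eqref{L-2-mu} from those uniform bounds, instead of the paper's separate argument, is fine once that bound, with its a priori constant, is built into the negated statement. Third, in your sketch as in the paper, the higher-integrability bound must be space--time. Sobolev--Poincar\'e in $x$ does not give this by itself, since it only controls $\int_t\big(\int_x|\nabla w_k|^2\beta_k\,dx\big)^p\,dt$. So you should either add a $\sup_t$ energy bound, or interpolate slice by slice using the $L^2_tL^{s}_x$ convergence that Aubin--Lions also provides.
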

\begin{proof}  By applying the time dilation as in \eqref{scale-2}, and replacing $u$, $\beta$, $\bA$, and $F$ with their corresponding dilated ones, it follows from Lemma \ref{scaling-lemma} that we  can assume without loss of generality that
\begin{equation} \label{scaling-beta-k}
\E_{\beta}(4) =\left( \fint_{B_4} \beta(x)^{-n_0} dx\right)^{\frac{1}{n_0}} =1.
\end{equation}
In this case $Q_{4,\beta} =Q_4$. It is worth noting that this procedure is admissible as the conditions in the lemma are invariant under the time dilation \eqref{scale-2}. 

\smallskip
We prove the assertion \eqref{L-2-w-small} via a contradiction argument. Assume that the assertion is not true, then there exist $\epsilon_0>0$, a sequence of coefficient matrices $\{\bA_k\}_k$ satisfying \eqref{ellip-cond} in $Q_{4}$ with its corresponding  sequence of weights $\{\beta_k\}_k$ satisfying \eqref{beta-cond} and \eqref{scaling-beta-k} for each $k\in \N$, i.e., $\beta_k \in A_{1+\frac{1}{n_0}}$ and for $n_0 = \max\{n-1, 1\}$
\begin{equation} \label{nabla-u-k}
[\beta_k]_{A_{1+\frac{1}{n_0}}} \leq M_0, \quad \left(\fint_{B_4} \beta_k(x)^{-n_0} dx \right)^{\frac{1}{n_0}} =1.
\end{equation} 
Moreover, there is a sequence $\{F_k\}_k \subset L^2(Q_4, \beta_k)^n$ such that
\begin{align} \label{a-mu-k}
\fint_{Q_{4}}\Big[ \Theta_{\bA_k, 4}(x,t)^2 + |F_k(x,t)|^2 \Big]\beta_k(dx dt) \leq \frac{1}{k^2},
\end{align}
and there exists a sequence $\{u_k\}_k \subset \W^{1,2}(Q_4, \beta_k)$ satisfying
\begin{align}\label{gradient small}
\fint_{Q_{4}} |\nabla u_k(x,t)|^2\beta_k(dxdt) \leq 1, \quad \forall \ k \in \mathbb{N},
\end{align}
where each $u_k \in \W^{1,2}(Q_4, \beta_k)$ is a weak solution of 
\[
\partial_tu_k-\textup{div}(\bA_k(x,t)\nabla u_k)=\textup{div}(\beta_k(x) F_k(x,t))\quad \text{in}\quad Q_4.
\]
However, for every $k \in \mathbb{N}$, and for every $v \in \W^{1,2}_{\text{loc}}(Q_{4})$ is any weak solution of the equation
\[
v_t - \textup{div} [(\bA_k)_{B_{4}}(t) \nabla v] = 0 \quad \text{in} \quad Q_{4},
\]
it follows that
\begin{equation} \label{w-k-contra}
\fint_{Q_{7/2}, \beta_k} |u_k(x,t) - (u_k)_{Q_1}- v(x,t) - [u_k - (u_k)_{Q_1}- v]_{0, 5/2}(t)|^2\beta_k(dxdt) \geq \epsilon_0.
\end{equation}

\smallskip
Now, let us provide a few observation. Note that from the H\"{o}lder inequality and \eqref{nabla-u-k} that
\begin{equation} \label{beta_k-measure}
1 \leq \left(\fint_{B_4} \beta_k(x) dx \right) \left(\fint_{B_4} \beta_k(x)^{-n_0} \right)^{\frac{1}{n_0}} =  \fint_{B_4} \beta_k(x) dx \leq M_0, \quad \forall \ k \in \mathbb{N}.
\end{equation}
Also, as in Lemma \ref{property}-(ii), there is $\zeta = \zeta(n, M_0) \in (0,1)$ such that
\[
\bar{\beta}_k(B_{7/2}) \leq \eta \bar{\beta}_k(B_{15/4}),
\]
where $\bar{\beta}_k(x) = \beta(x)^{-n_0}$. Therefore, there is $\kappa_0 = \kappa_0(n, M_0) \in (0,1)$ such that
\begin{equation} \label{beta-k-cylinder-0314}
Q_{7/2, \beta_k} \subset  Q_{15\kappa_0/4} \subset Q_{4\kappa_0} \subset Q_{4}, \quad \forall \ k \in \mathbb{N}.
\end{equation}

\smallskip
Next, it follows from the assumption \eqref{ellip-cond} and \eqref{beta_k-measure} that for all $t \in (-4, 0)$ and for all $\xi \in \mathbb{R}^n$,
\begin{equation} \label{bar-A-scaled-ellip}
\nu |\xi|^2 \leq \wei{(\bA_k)_{B_4}(t) \xi, \xi} \quad \text{and} \quad |(\bA_k)_{B_4}(t)| \leq \nu^{-1} M_0, \quad \forall \ k \in \mathbb{N}.
\end{equation}
Hence, there exist a subsequence that we still denote by $\{(\bA_k)_{B_4}(t)\}_k$ and a matrix $\bar{\bA}(t)\in L^{\infty}((-16,0), \M^{n\times n})$  satisfying \eqref{bar-A-scaled-ellip} such that the sequence $\{(\bA_k)_{B_4}(t)\}_k$ converges weak-$^*$ in $L^{\infty}((-16,0), \M^{n\times n})$ to $\bar{\bA}(t)$ as $k \rightarrow \infty$. Equivalently, 
\begin{equation}\label{weak star convergence}
\lim_{k\rightarrow \infty}\int_{-16}^{0} [(\bA_k)_{B_4}(t)-\bar{\bA}(t)]\cdot \bB(t) dt=0,
\end{equation}
for all $\bB(t)\in L^{1}((-16,0), \M^{n\times n})$. 

\smallskip
In addition,  it follows from Lemma \ref{L-2-inter-u-est},  \eqref{a-mu-k}, and \eqref{gradient small} that
\[
\|u_k - (u_k)_{Q_1}\|_{L^{p_0}(Q_4)} \leq N,
\]
where $N = N(n, M_0)>0$, and $p_0 \in (1, 2)$ is defined in \eqref{p-zero-def}. From this and as in the proof of Lemma \ref{L-2-inter-u-est} that uses Lemma \ref{L-q-2-wei}, there is $N = N(n, \nu, M_0)>0$ such that
\begin{equation} \label{u-k-bounded-p-0}
 \|u_k - (u_k)_{Q_4}\|_{\W^{1,p_0}(Q_4))} \leq N, \quad \forall \ k \in \mathbb{N}.
\end{equation}
Then, from Aubin-Lions theorem (see \cite{Simon}) and by passing a subsequence, we can find $u \in \W^{1,p_0}(Q_4))$ such that
\begin{equation} \label{uk-convergence}
\begin{array}{cclll}
 u_k - (u_k)_{Q_4} & \rightarrow & u_0 &\quad \text{strongly in} & \quad  L^{p_0}(Q_4)),\\
\nabla u_k & \rightharpoonup & \nabla u_0  & \quad \text{weakly in} & \quad  L^{p_0}(Q_4)),\\
\partial_t u_k & \rightharpoonup & \partial_t u_0  & \quad \text{weakly in} & \quad  L^{p_0}((-16, 0), W^{-1,p_0}(B_4)),
\end{array}
\end{equation}
as $k \rightarrow \infty$.

\smallskip
Next, for a fixed $\varphi \in C^\infty(Q_4)$ that vanishes near $\partial' Q_4$, and for every $k \in \mathbb{N}$, due to the weak formulation for the equation of $u_k$, we have
\begin{equation} \label{weak-form-u-k}
\begin{split}
& \int_{Q_4} [u_k(x,t) - (u_k)_{Q_4}] \partial_t \varphi (x,t) dxdt \\
& = \int_{Q_4} \wei{\bA_k \nabla u_k(x,t), \nabla \varphi (x,t)} dxdt + \int_{Q_4} \wei{F_k(x,t), \nabla \varphi(x,t)}  \beta_k(x) dxdt.
\end{split}
\end{equation}
Our goal now is to pass the limit as $k \rightarrow \infty$ on each term in \eqref{weak-form-u-k} to derive the equation for $u_0$. Note that from the last assertion in \eqref{uk-convergence} and from \eqref{a-mu-k}, we get
\begin{equation} \label{two-main-07-02}
\begin{split}
& \lim_{k\rightarrow \infty}\int_{Q_4} [u_k(x,t) - (u_k)_{Q_4}] \partial_t \varphi(x,t)  dxdt  = \int_{Q_4} u_0(x,t) \partial_t \varphi (x,t) dxdt  \quad \text{and} \\
&\lim_{k\rightarrow \infty} \int_{Q_4} \wei{F_k(x,t), \nabla \varphi(x,t)}  \beta_k(x) dxdt =0.
\end{split}
\end{equation}
Also, we write the first term on the right hand side of \eqref{weak-form-u-k} as
\begin{align}  \notag
& \int_{Q_4} \wei{\bA_k(x,t) \nabla u_k(x,t), \nabla \varphi(x,t)}  dxdt\\ \label{main-k-term-07-02}
& = \int_{Q_4} \wei{[\bA_k(x,t)  - (\bA_k)_{B_4}]\nabla u_k(x,t), \nabla \varphi(x,t)}  dxdt \\ \notag
&\quad+ \int_{Q_4} \wei{[(\bA_k)_{B_4}(t) - \bar{\bA}(t)] \nabla u_k(x,t), \nabla \varphi(x,t)} dxdt\\ \notag
& \quad + \int_{Q_4} \wei{\bar{\bA}(t) \nabla u_k(x,t), \nabla \varphi(x,t)} dxdt.
\end{align}
We then consider terms by terms on the right hand side of \eqref{main-k-term-07-02}. First of all, we note that
\begin{align*}
& \left| \int_{Q_4} \wei{[\bA_k(x,t)  - (\bA_k)_{B_4}]\nabla u_k(x,t), \nabla \varphi(x,t)}  dxdt \right| \\
& \leq \|\nabla \varphi \|_{L^\infty}  \int_{Q_4} |\bA_k(x,t)  - (\bA_k)_{B_4}| | \nabla u_k(x,t)| dxdt\\
& \leq N \left(\int_{Q_4} |\bA_k(x,t)  - (\bA_k)_{B_4}|^2 \beta_k(x)^{-1} dxdt \right)^{1/2} \left(\int_{Q_4} |\nabla u_k(x,t)|^2 \beta_k(x)  dx dt \right)^{1/2}
\end{align*}
From this, \eqref{a-mu-k}, \eqref{gradient small}, and \eqref{beta_k-measure}, we deduce that
\[
\lim_{k\rightarrow \infty} \int_{Q_4} \wei{\big[\bA_k(x,t)  - (\bA_k)_{B_4}\big] \nabla u_k(x,t), \nabla \varphi(x,t)}dxdt =0.
\]
Next, it follows from the integrations by parts that
\begin{align*}
& \int_{Q_4} \wei{[(\bA_k)_{B_4}(t) - \bar{\bA}(t)] \nabla u_k(x,t), \nabla \varphi(x,t)}  dxdt\\
& =-\int_{Q_4} [(\bA_k)_{B_4}(t) - \bar{\bA}(t)] \cdot D^2 \varphi(x,t)] [u_k(x,t) - (u_k)_{Q_4}]dxdt\\
&  = - \int_{Q_4} [(\bA_k)_{B_4}(t) - \bar{\bA}(t)] \cdot D^2\varphi(x,t)]  u_0(x,t) dxdt \\
& \quad + \int_{Q_4} [(\bA_k)_{B_4}(t) - \bar{\bA}(t)] \cdot D^2 \varphi(x,t)] [u_0(x,t) - u_k(x,t) + (u_k)_{Q_4}] \xi(t) dxdt.
\end{align*}
From this,  and by using \eqref{weak star convergence}, and the first convergence in \eqref{uk-convergence} with \eqref{bar-A-scaled-ellip}, we infer that
\[
\lim_{k\rightarrow \infty}\int_{Q_4} \wei{[(\bA_k)_{B_4}(t) - \bar{\bA}(t)] \nabla u_k(x,t), \nabla \varphi(x,t)} dxdt =0.
\]
Lastly, it follows from the second convergence in \eqref{uk-convergence} and the boundedness of $\bar{\bA}(t)$ and $\nabla \varphi$ that
\[
\lim_{k \rightarrow \infty} \int_{Q_4} \wei{\bar{\bA}(t) \nabla u_k(x,t), \nabla \varphi(x,t)} dxdt = \int_{Q_4} \wei{\bar{\bA}(t) \nabla u_0(x,t), \nabla \varphi(x,t)} dxdt.
\]
Summing up, from the limits we just did for the terms in the right hand side of \eqref{main-k-term-07-02}, we see that
\begin{equation} \label{last-main-07-02}
\lim_{k \rightarrow \infty} \int_{Q_4} \wei{\bA_k(x,t) \nabla u_k(x,t), \nabla \varphi(x,t)}  dxdt = \int_{Q_4} \wei{\bar{\bA}(t) \nabla u_0(x,t), \nabla \varphi(x,t)} dxdt.
\end{equation}

\smallskip
Now, from \eqref{two-main-07-02} and \eqref{last-main-07-02}, we pass the limit as $k \rightarrow \infty$ both sides of \eqref{weak-form-u-k} to obtain
\[
\int_{Q_4} u_0 \partial_t \varphi(x,t)  dxdt = \int_{Q_4} \wei{\bar{\bA}(t) \nabla u_0(x,t), \nabla \varphi(x,t) } dxdt
\]
This implies that $u_0 \in \W^{1, p_0}(Q_4)$ is a weak solution to the equation
\[
\partial_t u_0 -\text{div}(\bar{\bA}(t) \nabla u_0(x,t))= 0 \quad \text{in} \quad Q_4.
\]
We note that as $u_0 \in \W^{1,p_0}(Q_4)$, it follows from Lemma \ref{Lipschitz-constant} that $u_0 \in \W^{1,2}_{\text{loc}}(Q_2)$. Moreover, it follows from \eqref{u-k-bounded-p-0}  and the first convergnce in \eqref{uk-convergence} that
\[
\|u_0\|_{L^{p_0}(Q_4)} + \|\nabla u_0\|_{L^{p_0}(Q_4)}\leq N
\]
for some $N = N(n, \nu, M_0)>0$.  Then, by applying Lemma \ref{Lipschitz-constant} and using \eqref{beta-k-cylinder-0314}, we infer that
\begin{equation} \label{u-zero-bound-0314}
\|\nabla u_0\|_{L^\infty(Q_{7/2, \beta_k})}  \leq \|\nabla u_0\|_{L^\infty(Q_{4 \kappa_0})}\leq N, \quad \forall \ k \in \mathbb{N},
\end{equation}
where $\kappa_0 \in (0,1)$ is defined in \eqref{beta-k-cylinder-0314}.

\smallskip
Next for each $k \in \mathbb{N}$, let $g_k \in \W^{1,p_0}(Q_4)$ be the weak solution of the equation
\[
\left\{
\begin{array}{cccl}
\partial_t g_k -\text{div}((\bA_k)_{B_4}(t) \nabla g_k) & = & -\text{div}[(\bA_k)_{B_4}(t) - \bar{\bA}(t)] \nabla u_0) & \quad \text{in} \quad Q_4, \\
g_k & = & 0 & \quad \text{on} \quad \partial' Q_4.
\end{array} \right.
\]
We note that as $ -[(\bA_k)_{B_4}(t) - \bar{\bA}(t)] \nabla u_0 \in L^{p_0}(Q_4)$ and the leading coefficient $(\bA_k)_{B_4}(t)$ is uniformly elliptic and bounded as in \eqref{bar-A-scaled-ellip}, and it only depends on the time variable, the existence and uniqueness of $g_k \in \W^{1,p_0}(Q_4)$ is well-known (see \cite{KK1, KK2, DP}). Note also that as $\nabla u_0 \in L^2_{\text{loc}}(Q_4)$, it follows that $g_k \in \W^{1,2}_{\text{loc}}(Q_4)$. In addition, by the well-known regularity theory (see \cite{DP}), for every $\bar{q} \in (1, \infty)$, there are $N_1 = N_1(n, \nu, M_0)>0$ and $N_2 = N_2(n, \nu, M_0, \bar{q})>0$
\begin{equation} \label{h-k-est-07-1}
\begin{split}
& \|g_k\|_{\W^{1,p_0}(Q_4)} \leq N \|(\bA_k)_{B_4}(t) - \bar{\bA}(t)] \nabla u_0\|_{L^{p_0}(Q_4)} \leq N_1, \\
& \|\nabla g_k \|_{L^{\bar{q}}(Q_{4\kappa_0})} \leq N\Big[\|g_k\|_{L^{p_0}(Q_4)} + \|\nabla u_0\|_{L^{\infty}(Q_{4\kappa_0})}\| \Big] \leq N_2,
\end{split}
\end{equation}
for all $k \in \mathbb{N}$, where $\kappa_0$ is defined in \eqref{beta-k-cylinder-0314} and we also used \eqref{u-zero-bound-0314} in the last estimate. Due to the first assertion in \eqref{h-k-est-07-1}, the Aubin-Lions theorem (see \cite{Simon}), and by passing through a subsequence, we can find $g_0 \in \W^{1,p_0}(Q_4)$ such that
\begin{equation} \label{gk-convergence}
\begin{array}{lll}
 g_k \rightarrow g_0 &\quad \text{strongly in} & \quad L^{p_0}(Q_4)),\\
\nabla g_k \rightharpoonup \nabla g_0  & \quad \text{weakly in} & \quad  L^{p_0}(Q_4)),\\
\partial_t g_k \rightharpoonup \partial_t g_0  & \quad \text{weakly in} & \quad  L^{p_0}((-16, 0), W^{-1,p_0}(B_4)),
\end{array}
\end{equation}
as $k \rightarrow \infty$. By using \eqref{weak star convergence}, the weak formulation of $g_k$, \eqref{gk-convergence}, and then passing through the limit as $k \rightarrow \infty$, we see that $g_0 \in \W^{1,p_0}(Q_4)$ is a the weak solution of the equation
\[
\left\{
\begin{array}{cccl}
\partial_t g_0 -\text{div}(\bar{\bA}(t) \nabla g_0) & = & 0 & \quad \text{in} \quad Q_4, \\
g_0 & = & 0 & \quad \text{on} \quad \partial' Q_4.
\end{array} \right.
\]
From this, and the uniqueness of the weak solution to this equation, we see that $g_0 =0$ in $Q_4$. Consequently, it follows from the first assertion in \eqref{gk-convergence} that
\begin{equation} \label{g-k-zero-con}
\lim_{k \rightarrow \infty}\|g_k\|_{L^{p_0}(Q_4)} =0.
\end{equation}
Let $v_k = g_k - u_0 \in \W^{1, p_0}(Q_4) \cap \W^{1,2}_{\text{loc}}(Q_4)$, we see that $v_k$ is a weak solution of the equation
\[
\partial_t v_k - \text{div}((\bA_k)_{B_4} \nabla v_k) = 0 \quad \text{in} \quad Q_4.
\]

\smallskip
Now, let $w_k =u_k(x,t) - (u_k)_{Q_1} - v_k(x,t) \in \W^{1,2}_{\text{loc}}(Q_2)$ and we claim that
\begin{equation} \label{inter-claim}
\lim_{k \rightarrow \infty} \fint_{Q_{7/2, \beta_k}} |w_k(x,t) -[w_k]_{0, 5/2}(t)|^2 \beta_k(dxdt) =0.
\end{equation}
Note that once this claim is proved, we obtain a contradiction to \eqref{w-k-contra} by taking $v=v_k$ in \eqref{w-k-contra} with sufficiently large $k$. From this, the assertion \eqref{L-2-w-small} is proved. 

\smallskip
We now focus on proving the assertion \eqref{inter-claim}. Let us denote $\hat{w}_k(x,t) = w_k(x,t) - [w_k]_{0,5/3}(t)$. By the Sobolev-Poincar\'{e} inequality (Lemma \ref{imbed-wei-0309}) and the reverse H\"{older} property of the Muckenhoupt weights (Lemma \ref{R-Holder}), there are $\kappa  \in (\frac{n}{n-1}, \infty)$ sufficiently closed to $\frac{n}{n-1}$, and $\gamma = \gamma(n, M_0) \in (0,1)$ such that
\begin{align}\notag
& \Big(\fint_{Q_{7/2, \beta_k}} |\hat{w}_k(x,t)|^{2\kappa}\beta_k(dxdt) \Big)^{\frac{1}{2\kappa}}  \\ \notag 
& \leq N \Big (\fint_{Q_{7/2, \beta_k}} |\nabla u_k(x,t)|^{2} \beta_k(dxdt) \Big)^{\frac{1}{2}} + N \Big(\fint_{Q_{7/2, \beta_k}} |\nabla u_0(x,t)|^{2}  \beta_k(dxdt) \Big)^{\frac{1}{2}} \\
& \qquad \quad + N \Big(\fint_{Q_{7/2, \beta_k}} |\nabla g_k(x,t)|^{2}  \beta_k(dxdt) \Big)^{\frac{1}{2}} \\  \notag
& \leq N \Big (\fint_{Q_{4, \beta_k}} |\nabla u_k(x,t)|^{2} \beta_k(dxdt) \Big)^{\frac{1}{2}} +  N  \|\nabla u_0\|_{L^\infty(Q_{7/2, \beta_k})}  \\ \notag
& \qquad + N \Big( \frac{1}{\beta_k(B_{7/2})} \Big)^{1/2} \Big( \fint_{B_4} \beta_k(x)^{1+\gamma} dx\Big)^{\frac{1}{1+\gamma}}\|\nabla g_k\|_{L^{\frac{2(\gamma+1)}{\gamma}}(Q_{15\kappa_0/4})}^2   \\ \label{w-k-bounded-0318}
& \leq N,
\end{align}
where we have used Lemma \ref{R-Holder}, \eqref{beta_k-measure}, the doubling property of $\beta_k$, \eqref{u-zero-bound-0314}, and the second estimate in \eqref{h-k-est-07-1} for $q =\frac{2(\gamma+1)}{\gamma}$. Now, let $\tau = \tau(n, M_0)>0$ be a sufficiently small number to be determined, and let
\[
\theta = \frac{1/2 - 1/(2\kappa)}{1/\tau - 1/(2\kappa)} \in (0,1).
\]
Applying the H\"{o}lder inequality and \eqref{w-k-bounded-0318}, we infer that
\begin{align*}
& \Big( \fint_{Q_{7/2, \beta_k}} |\hat{w}_k(x,t)|^{2}\beta_k(dxdt)  \Big)^{\frac{1}{2}}\\
&\leq \Big( \fint_{Q_{7/2, \beta_k}} | \hat{w}_k(x,t)|^{\tau}\beta_k(dxdt)  \Big)^{\frac{\theta}{\tau}} \Big( \fint_{Q_{7/2, \beta_k}} |\hat{w}_k(x,t)|^{2\kappa}\beta_k(dxdt)  \Big)^{\frac{1-\theta}{2\kappa}} \\
& \leq N^{1-\theta} \Big( \fint_{Q_{7/2, \beta_k}} |\hat{w}_k(x,t)|^{\tau}\beta_k(dxdt) \Big)^{\frac{\theta}{\tau}} .
\end{align*}
On the other hand, from Lemma \ref{L-q-2-wei}-(i) and \eqref{beta_k-measure}, we can find two positive numbers $N = N(n, M_0)$ and $\gamma_0 = \gamma_0(n, M_0)$ such that
\begin{align*}
& \Big( \fint_{Q_{7/2, \beta_k}} |\hat{w}_k(x,t)|^{\tau}\beta_k(dxdt) \Big)^{\frac{1}{\tau}}  \leq N \Big(\fint_{Q_{7/2, \beta_k}} |\hat{w}_k(x,t)|^{\tau(1+ \gamma_0)} dxdt  \Big)^{\frac{1}{\tau(1+\gamma_0)}}.
\end{align*}
Then, by choosing $\tau = \tau(n, M_0)>0$ so that $\tau(1+\gamma_0) = p_0$, we obtain
\begin{align*}
& \Big( \fint_{Q_{7/2, \beta_k}} |\hat{w}_k(x,t)|^{2}\beta_k(dxdt)  \Big)^{\frac{1}{2}} \leq  N \Big( \fint_{Q_{7/2, \beta_k}} |\hat{w}_k(x,t)|^{p_0} dxdt  \Big)^{\frac{\theta}{p_0}} \\
& \rightarrow 0 \quad \text{as} \quad k\rightarrow \infty,
\end{align*}
where the last assertion is due to the first convergences in \eqref{uk-convergence} and in \eqref{g-k-zero-con}. The claim \eqref{inter-claim} is then proved, which also completes the proof of \eqref{L-2-w-small}.

\smallskip
It now remains to prove \eqref{L-2-mu}. Let $v$ be as in the assertion of the lemma that satisfies \eqref{L-2-w-small}. Then, it follows from the triangle inequality, Lemma \ref{Sobolev-Poincare}, and the doubling property of $\beta$ that
\begin{align*}
\Big(\fint_{Q_{7/2, \beta}} |\hat{v}(x,t)|^2 \beta(dxdt)  \Big)^{\frac{1}{2}} & \leq \bar{\epsilon} + \Big(\fint_{Q_{7/2, \beta}} |\hat{u}(x,t)|^2 \beta(dxdt)  \Big)^{\frac{1}{2}} \\
& \leq 1+ N\Big(\fint_{Q_{4, \beta}} |\nabla u(x,t)|^2 \beta(dxdt)  \Big)^{\frac{1}{2}} \leq N,
\end{align*}
where, $\hat{v}(x,t) = v(x,t) -[v]_{0, 5/2}(t)$ and $\hat{u}(x,t) = u(x,t) - [u]_{0, 5/2}(t)$. From the last estimate and Lemma \ref{L-q-2-wei}-(i), we infer that
\[
\Big(\fint_{Q_{7/2, \beta}} |\hat{v}(x,t)|^{p_0} dxdt  \Big)^{\frac{1}{p_0}} \leq N \Big(\fint_{Q_{7/2, \beta}} |\hat{v}(x,t)|^2 \beta(dxdt)  \Big)^{\frac{1}{2}} \leq N.
\]
As $v \in \W^{1,2}_{\text{loc}}(Q_4)$, it follows from the classical De Giorgi -Nash - Moser estimates that
\[
\|\hat{v}\|_{L^\infty(Q_{16/5, \beta})} \leq N \Big(\fint_{Q_{7/2, \beta}} |\hat{v}(x,t)|^{p_0} dxdt  \Big)^{\frac{1}{p_0}} \leq N.
\]
Then, from the standard Caccioppoli estimates for parabolic equations, we infer that
\[
\Big(\fint_{Q_{3, \beta}} |\nabla v(x,t)|^{2} dxdt  \Big)^{\frac{1}{2}} \leq N \Big(\fint_{Q_{16/5, \beta}} |\hat{v}(x,t)|^{2} dxdt  \Big)^{\frac{1}{2}} \leq N.
\]
Hence,  \eqref{L-2-mu} is proved and the proof of the lemma is completed.
\end{proof}
We conclude the section with the following proposition, which is the main result of the section.
\begin{proposition} \label{L2-gradient-comparision} For every $\nu\in (0,1)$, $M_0 \geq 1$, and $\epsilon \in (0,1)$, there exist a sufficiently small constant $\delta = \delta(n, \nu, M_0, \epsilon) \in (0,1)$ and a constant $N = N(n, \nu, M_0)>0$ such that the following assertions hold. Suppose that $\bA$ satisfies \eqref{ellip-cond} in $Q_{4, \beta}$ and its corresponding $\beta$ satisfies \eqref{beta-cond}. Suppose also that
\begin{equation} \label{small-data-interior}
 \fint_{Q_{4,\beta}} \big(\Theta_{\bA, 4}(x,t)^2 + |F(x,t)|^2\big)\beta(dx dt) \leq \delta^2.
\end{equation}
Then, for every weak solution $u \in \W^{1,2}(Q_{4,\beta}, \beta)$ of \eqref{eqn-Q_R} satisfying
\[
\fint_{Q_{4,\beta}} |\nabla u(x,t)|^2\beta(dxdt) \leq 1,
\]
there exists a weak solution $v \in \W^{1,2}_{\textup{loc}}(Q_{4,\beta})$ of \eqref{v-Qr-sol} such that
\begin{equation} \label{L-2-gradient-w-small}
\fint_{Q_{2, \beta}} |\nabla u(x,t)-\nabla v(x,t)|^2\beta( dx dt)  \leq \epsilon^2 \ \text{and} \ \|\nabla v\|_{L^\infty(Q_{5/2, \beta})}  \leq N.
\end{equation}
\end{proposition}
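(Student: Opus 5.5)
The plan is to combine Lemma \ref{L2-comparision} with the Caccioppoli-type estimate for differences, Lemma \ref{compare-lemma-1}, and the Lipschitz bound, Lemma \ref{Lipschitz-constant}. As in the previous proofs, we normalize via the time dilation \eqref{scale-2} so that $\E_\beta(4)=1$, hence $Q_{4,\beta}=Q_4$ and $1\le (\beta)_{B_4}\le M_0$. Given $\epsilon$, we choose $\bar\epsilon=\bar\epsilon(\epsilon,n,\nu,M_0)$ and then let $\delta\le\bar\delta(n,\nu,M_0,\bar\epsilon)$ be the constant from Lemma \ref{L2-comparision}. That lemma yields a weak solution $v\in\W^{1,2}_{\textup{loc}}(Q_{4,\beta})$ of \eqref{v-Qr-sol} with the following properties: \eqref{L-2-w-small} holds for $w=u-(u)_{Q_4,\beta}-v$, and $\fint_{Q_{3,\beta}}|\nabla v|^2\,dxdt\le N$. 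Since subtracting a constant does not change gradients, $\nabla w=\nabla u-\nabla v$.

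\textbf{Step 1: $L^\infty$ bound on $\nabla v$.} The matrix $(\bA)_{B_4}(t)$ satisfies \eqref{ellip-constant}, because $(\bA)_{B_4}$ is an average of $\bA$ and $\beta$ is doubling, so the radii $4$ and $2r$ are comparable. We apply Lemma \ref{Lipschitz-constant} with $p_0$ replaced by $2$; its proof works for any exponent, and the $L^2$ version follows directly from the classical theory once $v\in\W^{1,2}_{\textup{loc}}$. Combined with a covering of $Q_{5/2,\beta}$ by small cylinders contained in $Q_{3,\beta}$, this gives $\|\nabla v\|_{L^\infty(Q_{5/2,\beta})}\le N(\fint_{Q_{3,\beta}}|\nabla v|^2)^{1/2}\le N$. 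The covering uses the quasi-metric of Remark \ref{quasi-metric lemma} and the comparability of $\E_\beta(x,r)$ at nearby centers, which follows from doubling of $\bar\beta$ (Lemma \ref{property}). This proves the second part of \eqref{L-2-gradient-w-small}.

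\textbf{Step 2: gradient closeness.} We apply Lemma \ref{compare-lemma-1} with $\rho=5/2$, a cut-off $\varphi=\eta_{0,5/2}(x)\chi(t)$, and $\chi=1$ on $\Gamma_\beta(2)$ with support in $\Gamma_\beta(5/2)$ away from its bottom. Then $\varphi=1$ on $Q_{2,\beta}$, since $\eta=1$ on $B_{4/5}$ and hence on $B_2$, and the support of $\varphi$ lies in $Q_{5/2,\beta}$. The resulting right-hand side has three terms:
\begin{itemize}
\item[(a)] $\int \hat w^2(|\varphi\varphi_t|+|\nabla\varphi|^2\beta)$. The weighted part is bounded by $N\beta(Q_{7/2,\beta})\bar\epsilon^2$ using \eqref{L-2-w-small}. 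The unweighted part $\int\hat w^2|\varphi_t|\,dxdt$ needs care: we apply Lemma \ref{imbed-epsilon-0309}(i) on each time slice, which gives an interpolation of the form $\|\hat w\|_{L^2(dx)}^2\lesssim \|\hat w\|_{L^2(\beta)}\|\nabla\hat w\|_{L^2(\beta)}$. After integrating in time and applying Cauchy--Schwarz, this term is bounded by $N\bar\epsilon\,(\int|\nabla w|^2\beta)^{1/2}$ over $Q_{7/2,\beta}$. The gradient factor is bounded by $\|\nabla u\|_{L^2(\beta)}$ plus $\|\nabla v\|_{L^\infty(Q_{7/2,\beta})}$. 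To keep the latter under control, we either enlarge the cylinder in Step 1 or shrink the cut-off so that the interpolation is only needed on $Q_{5/2,\beta}$, where $\nabla v$ is bounded. All time lengths are comparable to $1$ by the normalization, so $|\chi'|\le N$.
\item[(b)] The $F$-term is at most $N\delta^2$.
\item[(c)] The coefficient term is at most $N\|\nabla v\|^2_{L^\infty(Q_{5/2,\beta})}\delta^2\le N\delta^2$, by Step 1 and \eqref{small-data-interior}.
\end{itemize}
Dividing by $\beta(Q_{2,\beta})$, which is comparable to $\beta(Q_{4,\beta})$ by doubling and equal time lengths, we obtain $\fint_{Q_{2,\beta}}|\nabla u-\nabla v|^2\beta\le N(\bar\epsilon+\bar\epsilon^2+\delta^2)$. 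It then suffices to choose $\bar\epsilon$ and $\delta$ small.

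\textbf{Main obstacle.} The delicate point is the unweighted time-derivative term in (a). Lemma \ref{L2-comparision} controls $\hat w$ only in $L^2(\beta)$, so transferring to unweighted $L^2$ requires the interpolation in Lemma \ref{imbed-epsilon-0309}. That interpolation in turn needs a uniform bound on $\nabla w$ in $L^2(\beta)$ on a slightly larger cylinder, so the radii ($5/2$, $3$, $7/2$) must be nested consistently with where $\nabla v$ is known to be bounded. If this bookkeeping fails, the fallback is to interpolate between $L^{p_0}(dx)$ smallness and the uniform higher integrability \eqref{w-k-bounded-0318}, as in the proof of Lemma \ref{L2-comparision}.
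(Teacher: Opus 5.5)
Your proposal is correct and follows the paper's proof essentially step by step. You take $v$ from Lemma \ref{L2-comparision}, get the $L^\infty$ bound on $Q_{5/2,\beta}$ from Lemma \ref{Lipschitz-constant} and doubling, use the interpolation of Lemma \ref{imbed-epsilon-0309} to make the unweighted $\hat w^2|\varphi\varphi_t|$ term small, and close with Lemma \ref{compare-lemma-1} at $\rho=5/2$ and a choice of small $\bar\epsilon$ and $\delta$.

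Your second option, interpolating slice-wise only on $B_{5/2}\times\Gamma_\beta(5/2)$, is the right one: that is exactly where $\varphi$ is supported and where $\nabla v$ is already bounded. It is cleaner than the paper's estimate on $Q_{3,\beta}$, which mixes weighted and unweighted gradient norms. One correction: the interpolation actually gives a positive power of $\bar\epsilon$, from $\|\hat w\|_{L^2}\lesssim \|\hat w\|_{L^2(\beta)}^{1-\theta}\|\nabla w\|_{L^2(\beta)}^{\theta}$, rather than exactly $\bar\epsilon$. That power is still enough to conclude.
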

\begin{proof} For the given $\epsilon>0$, let $\bar{\epsilon} = \bar{\epsilon}(n, \nu, M_0, \epsilon) \in (0,1)$ be a sufficiently small so that
\begin{equation} \label{bar-epsilon-choice}
N_0 \bar{\epsilon} \leq \frac{\epsilon}{2}
\end{equation}
where $N_0 = N_0(n, \nu, M_0)>0$ is the number defined in \eqref{nabla-w-inter-est-0330} below.  Then, with the given number $\nu$ and $M_0$, and this $\bar{\epsilon}$, let $\bar{\delta} = \bar{\delta}(n, \nu, M_0, \bar{\epsilon}) \in (0,1)$ be the small number defined in Lemma \ref{L2-comparision}.  

\smallskip
Now, we define
\begin{equation} \label{delta-choice-inter}
\delta =\min\big\{\bar{\delta}, \frac{\epsilon}{2N_0}\big \}
\end{equation}
and we prove the proposition with this choice of $\delta$. Corresponding to $\nu, M_0$, and $\bar{\epsilon}$, let $v \in \W^{1,2}_{\text{loc}}(Q_{4, \beta})$ be the function defined in Lemma \ref{L2-comparision}, which is a weak solution of \eqref{v-Qr-sol} that satisfies
\begin{equation} \label{w-inter-L2-wei}
\Big (\fint_{Q_{7/2, \beta}} |\hat{w}(x,t)|^2 \beta(dxdt)\Big)^{1/2} \leq \bar{\epsilon} \quad \text{and} \quad \Big (\fint_{Q_{3, \beta}} |\nabla v(x,t)|^2 dx dt \Big)^{\frac{1}{2}} \leq N,
\end{equation}
where $\hat{w}(x,t) = w(x,t) -[w]_{0, 5/2}(t)$ with $w(x,t) = u(x,t) - (u)_{Q_4} - v(x,t)$, and $N = N(n, \nu, M_0)>0$. We then apply Lemma \ref{Lipschitz-constant} and the doubling property of $\beta$ to infer that
\[
\|\nabla v\|_{L^\infty(Q_{5/2, \beta})} \leq N\Big (\fint_{Q_{3, \beta}} |\nabla v(x,t)|^2 dx dt \Big)^{\frac{1}{2}} \leq N
\]
with some $N = N(n, \nu, M_0)>0$. The second assertion in \eqref{L-2-gradient-w-small} is proved. 

\smallskip
It remains to prove the first assertion in \eqref{L-2-gradient-w-small}. To this end, we apply Lemma \ref{imbed-epsilon-0309}-(i), the doubling property of $\beta$, and the triangle inequality to infer that
\begin{align} \notag
& \Big (\fint_{Q_{3, \beta}} |\hat{w}(x,t)|^2 dxdt \Big)^{\frac{1}{2}}  \\ \notag
& \leq N\Big (\fint_{Q_{3, \beta}} |\hat{w}(x,t)|^2 \beta(dxdt) \Big)^{\frac{1}{2}} \Big (\fint_{Q_{3, \beta}} |\nabla w(x,t)|^2\beta(dxdt) \Big)^{\frac{1}{2}} \\ \notag
& \leq \bar{\epsilon} N \left[ \Big (\fint_{Q_{3, \beta}} |\nabla u(x,t)|^2 dxdt \Big)^{\frac{1}{2}}
+ \Big (\fint_{Q_{3, \beta}} |\nabla v(x,t)|^2 dxdt \Big)^{\frac{1}{2}} \right] \\ \label{w-inter-L2}
& \leq N \bar{\epsilon},
\end{align}
where $N = N(n, \nu, M_0)>0$.

\smallskip
Now, by using Lemma \ref{compare-lemma-1} with $\rho = 5/2$ and a suitable cut-off function $\chi$,  and using the estimates \eqref{small-data-interior}, \eqref{w-inter-L2-wei}, \eqref{w-inter-L2}, and the doubling property of $\beta$, we can find for some $N_0 = N_0(n, \nu, M_0)>0$ so that
\begin{equation} \label{nabla-w-inter-est-0330}
\Big (\fint_{Q_{2, \beta}} |\nabla w(x,t)|^2 dxdt \Big)^{\frac{1}{2}}  \leq N_0[\bar{\epsilon} + \delta].
\end{equation}
From this, and the choice of $\bar{\epsilon}$ in \eqref{bar-epsilon-choice} and the choice of $\delta$ in \eqref{delta-choice-inter}, we conclude that
\[
\Big (\fint_{Q_{2, \beta}} |\nabla w(x,t)|^2 dxdt \Big)^{\frac{1}{2}} \leq \epsilon.
\]
The first assertion in \eqref{L-2-gradient-w-small} is proved, and the proof is completed.
\end{proof}
\begin{proof}[Proof of Theorem \ref{inter-theorem}] Theorem \ref{inter-theorem} follows from Proposition \ref{L2-gradient-comparision} and the level set argument introduced in \cite{CP}. We skip the details as the argument is standard now. Interested readers can find \cite{FP-2} for similar setting with completed proof, for examples.
\end{proof}
\section{Boundary estimates and proof of Theorem \ref{inter-theorem-bdry}} \label{bdr-section}
The main goal of this section is to prove Theorem \ref{inter-theorem-bdry}. To this end, we localize the equations \eqref{Q2-eqn} using the freezing coefficient technique and the level-set method. As in the interior case, it is sufficient to consider the following parabolic problem
\begin{equation} \label{eqn-bdry}
\left\{
\begin{aligned}
\beta(x) u_t - \textup{div}(\bA(x,t) \nabla u)  &= \textup{div}(F(x,t))\quad &&\text{in}\quad Q^+_{4,\beta},\\
u&=0 \quad &&\text{on} \quad T_{4} \times \Gamma_{\beta}(4).
\end{aligned}\right.
\end{equation}
We recall that the space $\hW^{1,p}(Q_{1,\beta}^+, \beta)$ is defined in the paragraph before Theorem \ref{inter-theorem-bdry}. Also, the definition of weak solutions $u \in \hat{\W}^{1,2}(Q^+_{4, \beta}, \beta)$ to \eqref{eqn-bdry} can be found in Definition \ref{boundary-weak-def}.  
\subsection{Boundary Poincar\'e estimates and Caccioppoli type estimates}  Let us recall that for a given number $M_0 \geq 1$, let $\gamma = \gamma(n, M_0)$ be the sufficently small number defined in Lemma \ref{L-q-2-wei} in which $q = 1+\frac{1}{n_0}$ for $n_0 = \max\{n-1, 1\}$. Then, the same as in \eqref{p-zero-def}, we define
\begin{equation} \label{p-zero-def-bdry}
p_0 = \min \Big\{ \frac{2(1+\gamma)}{2+ \gamma}, \frac{2}{1+\frac{1}{n_0} -\gamma} \Big\} \in (1, 2).
\end{equation}
We begin with the following lemma on Poincar\'e type inequality for weak solutions to \eqref{eqn-bdry}.
\begin{lemma} \label{L-2-bdry-u-est} For every $\nu \in (0,1)$ and $M_0 \geq 1$, there is $N = N(n, \nu, M_0)>0$ such that the following assertion holds. Suppose $u \in \hat{\W}^{1,2}(Q^+_{4, \beta}, \beta)$ is a weak solution of \eqref{eqn-bdry} with a matrix $\bA$ satisfying \eqref{ellip-cond} in $Q_{4,\beta}^+$ and its corresponding $\beta$ satisfying \eqref{beta-cond}, and with some $F \in L^2(Q_{4,\beta}^+, \beta)^n$. Then
\begin{equation} \label{L-2-bdry-u-est-01-07}
\begin{split}
& \Big( \fint_{Q_{4, \beta}^+} | u (x,t)|^{p_0} dx dt \Big)^{\frac{1}{p_0}}   \\
& \leq N \Big(\fint_{Q_{4, \beta}^+}|\nabla u(x,t)|^2 \beta(dx dt) \Big)^{\frac{1}{2}} + \Big(\fint_{Q_{4, \beta}^+}  |F(x,t)|^2  \beta(dx dt) \Big)^{\frac{1}{2}}.
\end{split}
\end{equation}
\end{lemma}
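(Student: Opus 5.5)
We will follow the compactness argument used for Lemma \ref{L-2-inter-u-est}, with the zero trace on the flat boundary replacing the role of the average $(u)_{Q_{4,\beta}}$. First, by Lemma \ref{scaling-lemma} and the time dilation \eqref{scale-2}, we reduce to the normalization $\E_\beta(4)=1$. Then $Q_{4,\beta}^+=Q_4^+$ and, as in \eqref{beta-k-all-contradiction-0702}, $1\le \fint_{B_4}\beta\,dx\le M_0$. We argue by contradiction. Otherwise there are $\bA_k,\beta_k$ satisfying \eqref{ellip-cond}, \eqref{beta-cond} and this normalization, data $F_k$, and nonzero weak solutions $u_k\in\hW^{1,2}(Q_4^+,\beta_k)$ of \eqref{eqn-bdry} with
\[
\|u_k\|_{L^{p_0}(Q_4^+)}=1,\qquad \|\nabla u_k\|_{L^2(Q_4^+,\beta_k)}+\|F_k\|_{L^2(Q_4^+,\beta_k)}\le \frac1k .
\]
The normalization is possible by linearity.

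Next we derive uniform unweighted bounds. By Lemma \ref{L-q-2-wei}-(i) with $q=1+\frac1{n_0}$, applied on upper-half balls and for a.e. $t$, and then integrating in $t$, we obtain
\[
\|\nabla u_k\|_{L^2((-16,0),L^{\bar p_0}(B_4^+))}+\|F_k\|_{L^2((-16,0),L^{\bar p_0}(B_4^+))}\le \frac{N}{k}.
\]
By Lemma \ref{L-q-2-wei}-(ii), the fluxes $\bA_k\nabla u_k$ and $\beta_kF_k$ are bounded by $N/k$ in $L^2((-16,0),L^{q_0}(B_4^+))$, using $|\bA_k|\le\nu^{-1}\beta_k$. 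The equation then bounds $\partial_tu_k$ in $L^2((-16,0),W^{-1,q_0}(B_4^+))$. Together with $\|u_k\|_{L^{p_0}}=1$, this gives a uniform bound for $u_k$ in $L^{p_0}((-16,0),W^{1,p_0}(B_4^+))$, with $p_0$ as in \eqref{p-zero-def-bdry}. The zero trace on $T_4$ is preserved: each $u_k(\cdot,t)$ lies in the unweighted $W^{1,p_0}$ with zero trace, since the weighted-to-unweighted embedding does not affect the trace.

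We then pass to the limit. By Aubin--Lions \cite{Simon}, along a subsequence $u_k\to u$ strongly in $L^{p_0}(Q_4^+)$, $\nabla u_k\rightharpoonup\nabla u$ weakly in $L^{p_0}$, and $\partial_tu_k\rightharpoonup\partial_tu$ weakly in $L^{p_0}((-16,0),W^{-1,p_0})$. The trace of $u$ vanishes on $T_4\times(-16,0)$, because the zero-trace subspace of $L^{p_0}((-16,0),W^{1,p_0}(B_4^+))$ is weakly closed.

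From the gradient bound we get $\nabla u=0$ in $Q_4^+$, so $u(\cdot,t)$ is constant in $x$ for a.e. $t$. Zero trace on the flat boundary forces this constant to be $0$. Hence $u\equiv0$, which contradicts $\|u\|_{L^{p_0}(Q_4^+)}=\lim_k\|u_k\|_{L^{p_0}}=1$.

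This contradiction argument does not even need the limit equation. The $\partial_t$ bound is only used to obtain strong compactness in $L^{p_0}$. Alternatively, one could attempt a direct proof via Lemma \ref{imbed-wei-0309}-(ii) slice by slice, combined with Lemma \ref{L-q-2-wei}-(i). Indeed, for a.e. $t$, $\|u(\cdot,t)\|_{L^{p_0}(B_4^+)}\le N\|u(\cdot,t)\|_{L^{2+\kappa}(B_4^+)}\le N\|\nabla u(\cdot,t)\|_{L^2(B_4^+,\beta)}$, and integrating in $t$ would give the estimate even without the $F$ term. This direct route is probably the cleaner proof, and I would try it first.

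The main obstacle in the compactness route is the uniform control of $\partial_tu_k$ needed for Aubin--Lions. The direct route only requires checking that Lemma \ref{imbed-wei-0309}-(ii) applies to $W^{1,2}(B_4^+,\beta)$ functions with zero trace, with constants uniform in $t$ after the normalization $\E_\beta(4)=1$. For that, the ratio $\beta(B_4^+)/|B_4^+|$ must be bounded above and below, which follows from $1\le\fint_{B_4}\beta\le M_0$ and the doubling property.
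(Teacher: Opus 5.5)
Your compactness argument is essentially the paper's proof, which just says to rerun the contradiction argument of Lemma \ref{L-2-inter-u-est} with the half-ball version of Lemma \ref{L-q-2-wei}. Your remark that the limit equation is not needed is correct. The zero trace on $T_4$ does the job that the mean-zero normalization and $\partial_t u=0$ do in the interior case, so the equation enters only through the bound on $\partial_t u_k$ that Aubin--Lions requires.

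Your alternative direct route is also correct, and it is genuinely different and simpler. For a.e. $t$, $u(\cdot,t)\in W^{1,2}(B_4^+,\beta)$ has zero trace on $T_4$. Since $p_0<2+\kappa$, H\"older and Lemma \ref{imbed-wei-0309}-(ii) give
\[
\Big(\fint_{B_4^+}|u(x,t)|^{p_0}\,dx\Big)^{\frac{1}{p_0}}\le 4N\,a(t)^{\frac12},\qquad a(t)=\frac{1}{\beta(B_4^+)}\int_{B_4^+}|\nabla u(x,t)|^2\beta(x)\,dx .
\]
Raise this to the power $p_0$ and average over $\Gamma_\beta(4)$. Jensen's inequality applies since $p_0/2<1$. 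Combined with $\beta(Q^+_{4,\beta})=\beta(B_4^+)\,|\Gamma_\beta(4)|$, this gives
\[
\Big(\fint_{Q^+_{4,\beta}}|u|^{p_0}\,dxdt\Big)^{\frac{1}{p_0}}\le N\Big(\fint_{Q^+_{4,\beta}}|\nabla u|^2\,\beta(dxdt)\Big)^{\frac12},\qquad N=N(n,M_0).
\]

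You do not need the normalization $\E_\beta(4)=1$, and your worry about bounding $\beta(B_4^+)/|B_4^+|$ disappears. Lemma \ref{imbed-wei-0309} is already stated with a $\beta$-average on the right-hand side, so the inequality is insensitive to multiplying $\beta$ by a constant and to the length of $\Gamma_\beta(4)$.

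This route gives a constant depending only on $n$ and $M_0$, not on $\nu$. It uses neither the equation nor $F$: the bound holds for every $u\in L^2(\Gamma_\beta(4),W^{1,2}(B_4^+,\beta))$ vanishing on $T_4$. In particular it proves the inequality exactly as stated, where the $F$-term carries no factor $N$.

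The compactness route is the natural one in the interior Lemma \ref{L-2-inter-u-est}, where it is really needed. There the subtracted constant is the space-time mean $(u)_{Q_{4,\beta}}$, and slice-wise Poincar\'e only controls $u-[u]_{0,\rho}(t)$. The time oscillation of $[u]_{0,\rho}(t)$ then has to come from the equation. At the flat boundary that difficulty is absent, so the contradiction/Aubin--Lions machinery the paper invokes is heavier than necessary.
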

\begin{proof} We use the contradiction argument as in the proof of Lemma \ref{L-2-inter-u-est}. Instead of using  Lemma \ref{L-q-2-wei} for the whole balls, we use its version with the upper-half balls. We skip the details as it is exactly the same as that of Lemma \ref{L-2-inter-u-est}.
\end{proof}

\smallskip
As we use a perturbation technique by freezing the coefficients prove Theorem \ref{inter-theorem-bdry}, we  study the following equation with frozen coefficients
\begin{equation} \label{bdry-approx}
\left\{
\begin{aligned}
v_t - \textup{div} ((\bA)_{B^+_{4}}(t) \nabla v)  &= 0 \quad &&\text{in}\quad Q^+_{4,\beta},\\[4pt]
v&=0 \quad &&\text{on} \quad T_{4} \times \Gamma_{\beta}(4).
\end{aligned}\right.
\end{equation}
Let us recall the following notation defined in \eqref{Theta-beta-def} in the boundary setting
\begin{equation} \label{Theta-A-def+}
 \Theta_{\bA, r} (x,t)= |\bA(x,t) - (\bA)_{B_{r}^+}(t)| \quad \text{where} \quad (\bA)_{B^+_{r}}(t)= \fint_{B^+_{r}} \bA(x, t) dx.
\end{equation}
Similar to Lemma \ref{compare-lemma-1}, we have the following important lemma on boundary Caccioppoli type estimates for the difference of two solutions of \eqref{eqn-bdry} and \eqref{bdry-approx}.
\begin{lemma} \label{bdry-Caccioppoli}  Suppose that $u \in \hat\W^{1,2}(Q^+_{4,\beta}, \beta)$ is a weak solution of \eqref{eqn-bdry} in which the matrix $\bA$ satisfies \eqref{ellip-cond} in  $Q^+_{4,\beta}$ and its corresponding $\beta$ satisfies \eqref{beta-cond}, and $F \in L^2(Q_{4,\beta}^+, \beta)^n$. Suppose also that  $v \in \hat\W^{1,p_0}_{\textup{loc}}(Q^+_{4,\beta})$ is a weak solution of \eqref{bdry-approx} with $p_0 \in (1, 2)$. Then, for  $w = u -v$,  it holds that
\begin{align*}
& \int_{Q^+_{4, \beta}} |\nabla w(x,t)|^2 \varphi(x,t)^2 \beta(x) dx dt \leq   N \int_{Q^+_{4, \beta}} |F(x,t)|^2 \varphi^2(x,t) \beta(x) dx dt \\
& \quad + N\int_{Q^+_{4, \beta}} w(x,t)^2  \big[|\varphi| |\partial_t \varphi| + |\nabla \varphi|^2 \beta(x) \big]  dxdt  \\
& \qquad +   N \|\varphi \nabla v\|_{L^\infty(Q^+_{4, \beta})}^2 \int_{Q^+_{4, \beta}} \Theta_{\bA, 4,}(x,t)^2 \beta(x) dx dt,
\end{align*}
for every $\varphi  \in C^\infty_0(Q_{4, \beta})$ with $0 \leq \varphi \leq 1$, where $N = N(n, \nu)>0$.
\end{lemma}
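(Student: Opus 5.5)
The plan mirrors the proof of Lemma \ref{compare-lemma-1}: reduce to a Caccioppoli estimate for an equation satisfied by $w$, then run a boundary energy estimate. Because $u$ and $v$ both vanish on the flat part $T_4\times\Gamma_\beta(4)$, no averages $[w]_{0,\rho}$ are needed. The Gianazza--Schwarzacher-type correction used in Lemma \ref{Caccio-1} is therefore unnecessary, and $w$ itself can be tested against $w\varphi^2$. The one genuinely new point is that $\varphi$ need not vanish on $T_4$, since $w$ already does there.

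First, I will show that $w$ lies in the right energy space. By Lemma \ref{Bdr-Lipschitz-constant}, $v\in \W^{1,2}_{\textup{loc}}$ up to the flat boundary and $\nabla v\in L^\infty_{\textup{loc}}(Q^+_{4,\beta})$. Hence $\nabla v\in L^2_{\textup{loc}}(Q^+_{4,\beta},\beta)$, because $\beta$ is locally integrable. Together with Lemma \ref{L-q-2-wei}-(i) (the upper-half-ball version), this gives $w=u-v\in \hat\W^{1,2}_{\textup{loc}}(Q^+_{4,\beta},\beta)$ with $w=0$ on $T_4\times\Gamma_\beta(4)$ in the trace sense. Subtracting the equations, $w$ weakly solves
\[
w_t-\textup{div}(\bA\nabla w)=\textup{div}(\beta G), \qquad G=F+\beta^{-1}\big(\bA-(\bA)_{B_4^+}(t)\big)\nabla v .
\]
Using $|\bA-(\bA)_{B^+_4}|\le \Theta_{\bA,4}\beta$ (reading \eqref{Theta-A-def+} with the factor $\beta^{-1}$, as in \eqref{Theta-beta-def}), I get
\[
\int |G|^2\varphi^2\beta \le 2\int|F|^2\varphi^2\beta+2\|\varphi\nabla v\|_{L^\infty}^2\int \Theta_{\bA,4}^2\beta .
\]

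Next, I will run the energy estimate. The test function is $w\varphi^2$, made admissible through Steklov averages in time as in \cite{DiB}. It is an admissible test function because $w$ vanishes on the flat boundary and $\varphi$ is compactly supported in $Q_{4,\beta}$, hence away from the curved boundary and the initial time. The time term produces $\frac12\int w^2\,\partial_t(\varphi^2)$, which is bounded by $\int w^2|\varphi||\partial_t\varphi|$. The ellipticity \eqref{ellip-cond} gives the lower bound $\nu\int|\nabla w|^2\varphi^2\beta$. The remaining terms are $2\int\langle\bA\nabla w,\nabla\varphi\rangle w\varphi$ and $\int\beta\langle G,\nabla(w\varphi^2)\rangle$. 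I will control both with $|\bA|\le\nu^{-1}\beta$ and Young's inequality, producing $\epsilon\int|\nabla w|^2\varphi^2\beta+N\int w^2|\nabla\varphi|^2\beta+N\int|G|^2\varphi^2\beta$. Absorbing the $\epsilon$-term gives the claim with $N=N(n,\nu)$.

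The main obstacle is justifying the test function, which requires two things. The first is that all integrals are finite: $w^2$ appears against $|\partial_t\varphi|$ without a weight, so I need $w\in L^2_{\textup{loc}}$ unweighted. This follows from Lemma \ref{imbed-epsilon-0309}-(ii) applied to $u$ and $v$, which both vanish on $T_4$. The second is the Steklov-average limit in the mixed setting where $w_t\in L^2(W^{-1,2}(B^+,\beta))$. I expect to handle it exactly as in Lemma \ref{Caccio-1}, approximating and passing to the limit using $\nabla w\in L^2(\beta)$ and $w\in L^2$.
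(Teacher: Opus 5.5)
Your proposal is correct and follows the paper's proof essentially step by step. You write the equation for $w=u-v$ as $w_t-\textup{div}(\bA\nabla w)=\textup{div}(\beta G)$ with $G=F+\beta^{-1}(\bA-(\bA)_{B_4^+}(t))\nabla v$, test it with $w\varphi^2$ via Steklov averages, and close with \eqref{ellip-cond} and Young's inequality, needing no averaged correction since $w$ vanishes on the flat boundary. Two of your additions only make explicit what the paper's argument uses implicitly: reading $\Theta_{\bA,4}$ with the factor $\beta^{-1}$, and checking via Lemma \ref{imbed-epsilon-0309}-(ii) that the unweighted term $\int w^2|\varphi||\partial_t\varphi|$ is finite.
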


\smallskip
\begin{proof} To begin with, it follows from Lemma \ref{Bdr-Lipschitz-constant} that $\nabla v  \in L^{\infty}_{\text{loc}}(Q^+_{4,\beta})$. Consequently,  $w = u - v \in  \hat\W^{1,2}_{\text{loc}}(Q^+_{4, \beta}, \beta)$ is a weak solution of
\begin{equation*}
w_t  - \textup{div}( \bA \nabla w)  = \textup{div} (\beta(x) G(x,t))  \quad \text{in} \quad Q^+_{4,\beta},
\end{equation*}
where
\[
G(x,t)= F(x,t) + (\bA (x,t)- (\bA)_{B^+_{4}}(t)) \nabla v(x,t)\beta(x)^{-1}.
\]
with the boundary condition $w=0$ on $T_4\times \Gamma_{\beta}(4)$. Then, by using Steklov's average if needed, we multiply equation of $w$ with $w\varphi^2$ and use the integration by parts to obtain
\begin{align*}
& \frac{1}{2} \frac{d}{dt} \int_{B_4} w(x,t)^2 \varphi(x,t)^2 dx + \int_{B_4} \wei{\bA(x,t) \nabla w(x,t), \nabla w(x,t)} \varphi(x,t)^2 dx \\
& = \int_{B_4} \big[w(x,t)^2 \varphi(x,t) \varphi_t(x,t) - 2\wei{\bA(x,t) \nabla w(x,t), \nabla \varphi(x,t)} \varphi(x,t) w(x,t) \big]dx \\
& \quad - \int_{B_4} \beta(x)\big[ \wei{G(x,t), \nabla w(x,t) } \varphi(x,t)^2 + 2 \wei{G(x,t), \nabla \varphi(x,t)} \varphi(x,t) w(x,t) \big] dx.
\end{align*}
Then, applying  \eqref{ellip-cond} and Young's inequality, we infer that
\begin{align*}
&  \frac{d}{dt} \int_{B_4} w(x,t)^2 \varphi(x,t)^2 dx + \nu \int_{B_4} |\nabla w(x,t)|^2 \varphi(x,t)^2 \beta(x) dx \\
& \leq N \int_{B_4} w(x,t)^2 \big[|\varphi| |\partial_t \varphi| + |\nabla \varphi|^2 \beta(x) \big] dx + N \int_{B_4} |G(x,t)|^2 |\varphi(x,t)|^2  \beta(x) dx.
\end{align*}
We note that
\begin{align*}
& \int_{B_4} |G(x,t)|^2 |\varphi(x,t)|^2  \beta(x) dx \leq  N\int_{B_4} |F(x,t)|^2 |\varphi(x,t)|^2  \beta(x) dx \\
& \quad + N \|\varphi \nabla v\|_{L^\infty(Q^+_{4, \beta})}^2  \int_{B_4} |\Theta_{\bA, 4}(x,t)|^2 \beta(x) dx.
\end{align*}
From this and the standard method with the time integration, assertion of the lemma follows.
\end{proof}
\subsection{Boundary approximation estimates and proof of Theorem \ref{inter-theorem-bdry}} We begin with the following lemma which is the boundary version of  Lemma \ref{L2-comparision}  that prove the $L^2(Q_{7/2, \beta}, \beta)$ closeness of solutions of the two PDEs \eqref{eqn-bdry} and \eqref{bdry-approx}. 
\begin{lemma} \label{L2-comparision-bdry} For every $\nu \in (0,1)$ , $M_0 \geq 1$, and a small number $\epsilon \in (0, 1)$, there exist a sufficiently small constant $\bar\kappa = \bar \kappa(n, \nu, M_0, \epsilon)$  and a number $N = N(n, \nu, M_0)>0$ such that the following assertions hold. Suppose $\bA$ satisfies \eqref{ellip-cond} in $Q^+_{4, \beta}$ and its corresponding $\beta$ satisfies \eqref{beta-cond}. Suppose also that
\begin{align*}
& \fint_{Q^+_{4,\beta}}\Theta_{\bA, 4}(x,t)^2 \beta(dxdt)  + \fint_{Q^+_{4,\beta}}  |F(x,t)|^2 \beta(dx dt) \leq \bar \kappa^2.
\end{align*}
Then, for every weak solution $u \in \hat\W^{1,2}(Q^+_{4,\beta},\beta)$ of \eqref{eqn-bdry} satisfying
\[
\fint_{Q^+_{4,\beta}} |\nabla u(x,t)|^2dxdt \leq 1,
\]
there exists a weak solution $v \in \hat\W^{1,2}_{\textup{loc}}(Q^+_{4,\beta})$ of \eqref{bdry-approx} such that
\begin{equation} \label{L-2-w-small-bdry}
\begin{split}
&\Big(\fint_{Q^+_{7/2, \beta}} |u(x,t)-v(x,t)|^2 \beta(dx dt)\Big)^{1/2}  \leq \epsilon.
\end{split}
\end{equation}
Moreover,
\[
\Big(\fint_{Q^+_{3,\beta}} |\nabla v(x,t)|^2 dx dt\Big)^{\frac{1}{2}} \leq N.
\]
\end{lemma}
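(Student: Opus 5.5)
We argue by contradiction and compactness, exactly as in the proof of Lemma \ref{L2-comparision}, with the full-ball tools replaced by their upper-half-ball versions. In the boundary setting there is no need to subtract averages: the Dirichlet condition on $T_4$ lets us work with $w=u-v$ directly. First we apply the time dilation \eqref{scale-2} and Lemma \ref{scaling-lemma} to normalize $\E_\beta(4)=1$, so that $Q^+_{4,\beta}=Q^+_4$ and, as in \eqref{beta_k-measure}, $1\le \fint_{B_4}\beta\le M_0$. If the assertion fails, we obtain $\epsilon_0>0$, matrices $\bA_k$, weights $\beta_k$ with $[\beta_k]_{A_{1+1/n_0}}\le M_0$ and $\E_{\beta_k}(4)=1$, data $F_k$ with $\fint(\Theta_{\bA_k,4}^2+|F_k|^2)\beta_k\le k^{-2}$, and solutions $u_k$ with normalized gradients. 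These satisfy
\[
\fint_{Q^+_{7/2,\beta_k}}|u_k-v|^2\beta_k(dxdt)\ge\epsilon_0^2
\]
for every solution $v$ of \eqref{bdry-approx} with coefficient $(\bA_k)_{B_4^+}$. (We interpret the gradient normalization as the weighted one, via Lemma \ref{L-q-2-wei}.)

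For the uniform bounds, Lemma \ref{L-2-bdry-u-est} gives $\|u_k\|_{L^{p_0}(Q_4^+)}\le N$. The half-ball version of Lemma \ref{L-q-2-wei}, applied to $\nabla u_k$ and to $\beta_k F_k$, $\bA_k\nabla u_k$ through the PDE, then yields
\[
\|u_k\|_{\hW^{1,p_0}(Q_4^+)}\le N.
\]
Aubin--Lions then gives $u_k\to u_0$ strongly in $L^{p_0}$, with weak convergence of the gradients and time derivatives, and the trace condition on $T_4$ is preserved. We also have $(\bA_k)_{B_4^+}\rightharpoonup^*\bar\bA(t)$. Passing to the limit in the weak formulation, we split $\bA_k=(\bA_k-(\bA_k)_{B_4^+})+((\bA_k)_{B_4^+}-\bar\bA)+\bar\bA$. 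The first piece vanishes by Cauchy--Schwarz against $\beta_k^{-1}$. The second vanishes after integrating by parts in $x$ and using the strong convergence of $u_k$. It follows that $u_0$ solves $\partial_tu_0-\mathrm{div}(\bar\bA\nabla u_0)=0$ in $Q_4^+$ with $u_0=0$ on $T_4$. Lemma \ref{Bdr-Lipschitz-constant} gives $\|\nabla u_0\|_{L^\infty(Q^+_{4\kappa_0})}\le N$, where $\kappa_0$ is chosen as in \eqref{beta-k-cylinder-0314} so that $Q^+_{7/2,\beta_k}\subset Q^+_{15\kappa_0/4}$ for all $k$. Next we correct the coefficient: let $g_k$ solve
\[
\partial_tg_k-\mathrm{div}((\bA_k)_{B_4^+}\nabla g_k)=-\mathrm{div}\big(((\bA_k)_{B_4^+}-\bar\bA)\nabla u_0\big)
\]
with zero data on $\partial'Q_4^+$, and set $v_k=u_0-g_k$, which is an admissible $v$. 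By uniqueness of the limit problem, $g_k\to0$ in $L^{p_0}$. Boundary $L^{\bar q}$ estimates for time-only coefficients, obtained after rotation and odd extension as in Lemma \ref{Bdr-Lipschitz-constant}, give $\|\nabla g_k\|_{L^{\bar q}(Q^+_{15\kappa_0/4})}\le N$.

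The main obstacle is upgrading the unweighted convergence $w_k=u_k-v_k\to0$ in $L^{p_0}$ to the weighted $L^2(\beta_k)$ smallness, uniformly in the varying $\beta_k$. We plan to interpolate. For the upper end, Lemma \ref{Sobolev-Poincare}-(ii) (no mean subtraction, thanks to the trace) together with the reverse H\"older inequality Lemma \ref{R-Holder} gives a uniform bound on $\|w_k\|_{L^{2\kappa}(Q^+_{7/2,\beta_k},\beta_k)}$. That bound uses the gradient bounds on $u_k$, $u_0$ and $g_k$, with $\bar q=2(1+\gamma)/\gamma$ for $g_k$. For the lower end, H\"older's inequality with Lemma \ref{R-Holder}-(i) controls $L^\tau(\beta_k)$ by the unweighted $L^{\tau(1+\gamma_0)}=L^{p_0}$ norm. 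Interpolating the two bounds yields
\[
\Big(\fint_{Q^+_{7/2,\beta_k}}|w_k|^2\beta_k\Big)^{1/2}\le N\|w_k\|_{L^{p_0}}^{\theta}\to0,
\]
which contradicts the choice of $\epsilon_0$. Finally, for the gradient bound on $v$ we argue as at the end of Lemma \ref{L2-comparision}. The triangle inequality and Lemma \ref{Sobolev-Poincare}-(ii) give $\|v\|_{L^2(Q^+_{7/2,\beta},\beta)}\le N$. Lemma \ref{L-q-2-wei} converts this into an unweighted $L^{p_0}$ bound. Boundary De Giorgi--Nash--Moser estimates (via odd extension) and the Caccioppoli inequality then give $\fint_{Q^+_{3,\beta}}|\nabla v|^2\le N$.
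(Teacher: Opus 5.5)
Your proposal is correct and follows the paper's proof essentially step for step: the time-dilation normalization, the contradiction sequence $(\bA_k,\beta_k,F_k,u_k)$, the uniform $\hW^{1,p_0}$ bound from Lemma \ref{L-2-bdry-u-est} with Aubin--Lions, the identification of the limit equation for $u_0$, the corrector $g_k$, and the interpolation that turns unweighted $L^{p_0}$ convergence into weighted $L^2$ smallness all match the paper (which defers to the proof of Lemma \ref{L2-comparision}). The one local difference is how you build $g_k$: you solve directly in $Q_4^+$ with zero data on $\partial' Q_4^+$, whereas the paper extends $u_0$ oddly, solves in the full cylinder $Q_4$, and asserts that $g_k$ is odd by uniqueness. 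Your version gives $v_k=u_0-g_k=0$ on $T_4$ automatically and avoids that oddness claim, which requires the coefficients to commute with $x_n\mapsto -x_n$. The cost is Dirichlet solvability on the non-smooth half-cylinder, which is harmless if you solve in the energy class on a smaller half-cylinder where $\nabla u_0$ is bounded.
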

\begin{proof}  By applying the dilation in the time variable as in \eqref{scale-2}, we may assume without loss of generality that
\begin{equation} \label{scaling-beta-k-bdry}
\E_{\beta}(4) =\Big( \fint_{B_4^+} \beta(x)^{-n_0} dx\Big)^{\frac{1}{n_0}} =1 \quad \text{then} \quad Q^+_{4,\beta}=Q^+_4.
\end{equation}
To prove the assertion \eqref{L-2-w-small-bdry}, we use the same contradiction argument as in the proof of Lemma \ref{L2-comparision}.  The argument is similar but there are some few different details which we point out below.

\smallskip
Following the contradiction argument as in the proof of Lemma \ref{L2-comparision}, there exist $\epsilon_0>0$, a sequence of coefficient matrices $\{\bA_k\}_k$ satisfying \eqref{ellip-cond} in $Q_{4}^+$ with its corresponding  sequence of weights $\{\beta_k\}_k$ satisfying \eqref{beta-cond} and \eqref{scaling-beta-k} for each $k\in \N$, i.e., $\beta_k \in A_{1+\frac{1}{n_0}}$ and for $n_0 = \max\{n-1, 1\}$, $[\beta_k]_{A_{1+\frac{1}{n_0}}} \leq M_0$,
\begin{equation} \label{nabla-u-k-bdry}
[\beta_k]_{A_{1+\frac{1}{n_0}}} \leq M_0, \ \Big(\fint_{B_4} \beta_k(x)^{-n_0} dx \Big)^{\frac{1}{n_0}} =1, \ \text{and} \ 1 \leq \fint_{B_4} \beta_k(x) dx \leq M_0,
\end{equation} 
for all $k \in \mathbb{N}$. Moreover, there is a sequence $\{F_k\}_k \subset L^2(Q_4, \beta_k)^n$ such that
\begin{align} \label{a-mu-k-bdry}
\fint_{Q_{4}^+}\Big[ \Theta_{\bA_k, 4}(x,t)^2 + |F_k(x,t)|^2 \Big]\beta_k(dx dt) \leq \frac{1}{k^2},
\end{align}
and there exists a sequence $\{u_k\}_k \subset \hat{\W}^{1,2}(Q_4^+, \beta_k)$ satisfying
\begin{align}\label{gradient small-bdry}
\fint_{Q_{4}^+} |\nabla u_k(x,t)|^2\beta_k(dxdt) \leq 1, \quad \forall \ k \in \mathbb{N},
\end{align}
where each $u_k \in \hat{\W}^{1,2}(Q_4^+, \beta_k)$ is a weak solution of 
\[
\left\{
\begin{array}{cccl}
\partial_tu_k-\textup{div}(\bA_k(x,t)\nabla u_k) & = &\textup{div}(\beta_k(x) F_k(x,t)) & \quad \text{in}\quad Q_4^+ \\
u_k & = & 0 & \quad \text{on} \quad T_4 \times (-16, 0).
\end{array} \right.
\]
However, for every $k \in \mathbb{N}$, and for weak solution every $v \in \hat{\W}^{1,2}_{\text{loc}}(Q_{4}^+)$ of the equation
\[
\left\{
\begin{array}{cccl}
v_t - \textup{div} [(\bA_k)_{B_{4}^+}(t) \nabla v]  & = &  0 & \quad \text{in} \quad Q_{4}^+ \\
v & = & 0 & \quad \text{on} \quad T_4 \times (-16, 0).
\end{array} \right.
\]
we have
\begin{equation} \label{w-k-contra}
\fint_{Q_{7/2}, \beta_k} |u_k(x,t) - v(x,t)|^2\beta_k(dxdt) \geq \epsilon_0.
\end{equation}

\smallskip
As in \eqref{bar-A-scaled-ellip}, we have
\begin{equation} \label{bar-A-scaled-ellip-bdry}
\nu |\xi|^2 \leq \wei{(\bA_k)_{B_4^+}(t) \xi, \xi} \quad \text{and} \quad |(\bA_k)_{B_4^+}(t)| \leq \nu^{-1} M_0, \quad \forall \ k \in \mathbb{N}.
\end{equation}
Then, let $\bar{\bA}(t)\in L^{\infty}((-16,0), \M^{n\times n})$ be a matrix satisfying \eqref{bar-A-scaled-ellip-bdry} such that \begin{equation}\label{weak star convergence--bdry}
\lim_{k\rightarrow \infty}\int_{-16}^{0} [(\bA_k)_{B_4^+}(t)-\bar{\bA}(t)]\cdot \bB(t) dt=0,
\end{equation}
for all $\bB(t)\in L^{1}((-16,0), \M^{n\times n})$. 

\smallskip
Then as in \eqref{u-k-bounded-p-0}, by applying Lemma \ref{L-2-bdry-u-est},  \eqref{a-mu-k-bdry}, and \eqref{gradient small-bdry} we find $N = N(n, \nu, M_0)>0$ such that
\begin{equation} \label{u-k-bounded-p-0-bdry}
 \|u_k\|_{\hat{\W}^{1,p_0}(Q_4^+))} \leq N, \quad \forall \ k \in \mathbb{N}.
\end{equation}
From this and the Aubin-Lions theorem (see \cite{Simon}) and by passing a subsequence, we can find $u \in \W^{1,p_0}(Q_4^+))$ such that
\begin{equation} \label{uk-convergence-bdry}
\begin{array}{lll}
 u_k \rightarrow u_0 &\quad \text{strongly in} & \quad  L^{p_0}(Q_4^+)),\\
\nabla u_k \rightharpoonup \nabla u_0  & \quad \text{weakly in} & \quad  L^{p_0}(Q_4^+)),\\
\partial_t u_k \rightharpoonup \partial_t u_0  & \quad \text{weakly in} & \quad  L^{p_0}((-16, 0), W^{-1,p_0}(B_4^+)),
\end{array}
\end{equation}
as $k \rightarrow \infty$. From this, and by passing through a limit as $k \rightarrow \infty$ as in the proof of Lemma \ref{L2-comparision}, we infer that $u_0 \in \hat{\W}^{1, p_0}(Q_4^+)$ is a weak solution to the equation
\[
\left\{
\begin{array}{cccl}
\partial_t u_0 -\text{div}(\bar{\bA}(t) \nabla u_0(x,t)) & = & 0 & \quad \text{in} \quad Q_4^+,\\
u_0 & = & 0 & \quad \text{on} \quad T_4 \times (-16, 0).
\end{array} \right.
\]
Also,  by applying Lemma \ref{Bdr-Lipschitz-constant} and using \eqref{beta-k-cylinder-0314}, we infer that $u_0 \in \W^{1,2}_{\text{loc}}(Q_4^+)$, and 
\begin{equation} \label{u-zero-bound-0314-bdry}
\|\nabla u_0\|_{L^\infty(Q_{7/2^+, \beta_k})}  \leq \|\nabla u_0\|_{L^\infty(Q^+_{4 \kappa_0})} \leq \|\nabla u_0\|_{L^{p_0}(Q_4^+)}\leq N, \quad \forall \ k \in \mathbb{N}.
\end{equation}

\smallskip
Next let $\hat{u}_0$ be the odd extension of $u_0$ with respect of $x_n$-variable.  From this, we can find $g_k \in \W^{1,p_0}(Q_4)$ that is the weak solution of the equation
\begin{equation} \label{g-k-bdry-eqn}
\left\{
\begin{array}{cccl}
\partial_t g_k -\text{div}((\bA_k)_{B_4}^+(t) \nabla g_k) & = & -\text{div}[(\bA_k)_{B_4}^+(t) - \bar{\bA}(t)] \nabla \hat{u}_0) & \quad \text{in} \quad Q_4, \\
g_k & = & 0 & \quad \text{on} \quad \partial' Q_4.
\end{array} \right.
\end{equation}
We note that the existence and uniqueness of $g_k \in \W^{1,p_0}(Q_4)$ is well-known (see \cite{KK1, KK2, DP}, for example). In addition, by the well-known regularity theory (see \cite{DP} for example), for every $\bar{q} \in (1, \infty)$, there are $N_1 = N_1(n, \nu, M_0)>0$ and $N_2 = N_2(n, \nu, M_0, \bar{q})>0$
\begin{equation*} 
\begin{split}
& \|g_k\|_{\W^{1,p_0}(Q_4)} \leq N \|(\bA_k)_{B_4}(t) - \bar{\bA}(t)] \nabla \hat{u}_0\|_{L^{p_0}(Q_4)} \leq N_1, \\
& \|\nabla g_k \|_{L^{\bar{q}}(Q_{4\kappa_0})} \leq N\Big[\|g_k\|_{L^{p_0}(Q_4)} + \|\nabla \hat{u}_0\|_{L^{\infty}(Q_{4\kappa_0})}\| \Big] \leq N_2,
\end{split}
\end{equation*}
Due to the first assertion in \eqref{h-k-est-07-1}, the Aubin-Lions theorem (see \cite{Simon}), and by passing through a subsequence, we can find $g_0 \in \W^{1,p_0}(Q_4)$ such that
\begin{equation*} 
\begin{array}{lll}
 g_k \rightarrow g_0 &\quad \text{strongly in} & \quad L^{p_0}(Q_4)),\\
\nabla g_k \rightharpoonup \nabla g_0  & \quad \text{weakly in} & \quad  L^{p_0}(Q_4)),\\
\partial_t g_k \rightharpoonup \partial_t g_0  & \quad \text{weakly in} & \quad  L^{p_0}((-16, 0), W^{-1,p_0}(B_4)),
\end{array}
\end{equation*}
as $k \rightarrow \infty$.  Moreover, by passing through the limmit as $k \rightarrow \infty$, we see that $g_0$ is a weak solution of the equation 
\[
\left\{
\begin{array}{cccl}
\partial_t g_0 -\text{div}(\bar{\bA}(t) \nabla g_0) & = & 0 & \quad \text{in} \quad Q_4, \\
g_0 & = & 0 & \quad \text{on} \quad \partial' Q_4^.
\end{array} \right.
\]
Therefore, $g_0 =0$ in $Q_4$. Note also as $\hat{u}_0$ is odd in $x_n$, it follows from the uniqueness of weak solution to \eqref{g-k-bdry-eqn} that $g_k$ is odd in $x_n$. As such,  $g_k \in \hat{\W}^{1,p_0}(Q_4^+)$ is a weak solution of the equation
\begin{equation*} \label{h-k-est-07-1-bar}
\left\{
\begin{array}{cccl}
\partial_t g_k -\text{div}((\bA_k)_{B_4}^+(t) \nabla g_k) & = & -\text{div}[(\bA_k)_{B_4}^+(t) - \bar{\bA}(t)] \nabla u_0) & \quad \text{in} \quad Q_4^+, \\
g_k & = & 0 & \quad \text{on} \quad \partial' Q_4^+.
\end{array} \right.
\end{equation*}
From now on, the rest of the proof follows exactly as that of Lemma \ref{L2-comparision}. We skip the details.
\end{proof}

From Lemma \ref{L2-comparision-bdry}, we obtain the following proposition which is the main result of the subsection.
\begin{proposition} \label{L2-gradient-comparision-bdry} For every $\nu\in (0,1)$, $M_0 \geq 1$, and for a sufficiently small number $\epsilon \in (0,1)$, there exist a sufficiently small constant $\kappa = \kappa(n, \nu, M_0, \epsilon) \in (0,1)$ and a constant $N = N(n, \nu, M_0)>0$ such that the following assertions hold. Suppose that $\bA$  satisfies \eqref{ellip-cond} in $Q^+_{4, \beta}$ and its corresponding  $\beta$ satisfies \eqref{beta-cond}. Suppose also that
\begin{align} \label{small-data-interior-bdry}
 \fint_{Q^+_{4,\beta}}\Theta_{\bA, 4}^2 dxdt  + \fint_{Q^+_{4,\beta}}|F|^2 dx dt \leq \kappa^2.
\end{align}
Then, for every weak solution $u \in \hat\W^{1,2}(Q^+_{4,\beta}, \beta)$ of \eqref{eqn-bdry} satisfying
\[
\fint_{Q^+_{4,\beta}} |\nabla u(x,t)|^2dxdt \leq 1,
\]
there exists a weak solution $v \in \hat\W^{1,2}_{\textup{loc}}(Q^+_{4,\beta})$ of \eqref{bdry-approx} such that
\begin{equation*} 
\Big(\fint_{Q^+_{2, \beta}} |\nabla u-\nabla v|^2 dx dt\Big)^{\frac{1}{2}}  \leq \epsilon \quad \text{and} \quad \|\nabla v\|_{L^\infty(Q^+_{3, \beta})}  \leq N.
\end{equation*}
\end{proposition}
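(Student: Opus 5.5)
The plan is to copy the proof of Proposition \ref{L2-gradient-comparision}, with Lemma \ref{L2-comparision-bdry} in place of Lemma \ref{L2-comparision} and Lemma \ref{bdry-Caccioppoli} in place of Lemma \ref{compare-lemma-1}. Given $\epsilon$, first fix $\bar\epsilon = \bar\epsilon(n,\nu,M_0,\epsilon)$ with $N_0\bar\epsilon \le \epsilon/2$. Here $N_0$ is the constant produced in the final Caccioppoli step below. Let $\bar\kappa$ be the constant of Lemma \ref{L2-comparision-bdry} corresponding to $\bar\epsilon$, and set $\kappa = \min\{\bar\kappa, \epsilon/(2N_0)\}$. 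By the time dilation \eqref{scale-2} and Lemma \ref{scaling-lemma}, we may assume $\E_\beta(4)=1$, so $Q^+_{4,\beta}=Q_4^+$ and $1\le \fint_{B_4}\beta\,dx \le M_0$.

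Lemma \ref{L2-comparision-bdry} gives a solution $v$ of \eqref{bdry-approx} with two properties:
\[
\Big(\fint_{Q^+_{7/2,\beta}}|u-v|^2\beta(dxdt)\Big)^{1/2}\le\bar\epsilon
\quad\text{and}\quad
\Big(\fint_{Q^+_{3,\beta}}|\nabla v|^2dxdt\Big)^{1/2}\le N .
\]
The gradient bound $\|\nabla v\|_{L^\infty(Q^+_{3,\beta})}\le N$ should follow from Lemma \ref{Bdr-Lipschitz-constant} with $p_0=2$, applied on slightly smaller half-cylinders and combined with the doubling property of $\beta$ (Lemma \ref{property}). Since the statement asks for the bound on $Q^+_{3,\beta}$ rather than $Q^+_{5/2,\beta}$, one needs either the $L^2$ bound of Lemma \ref{L2-comparision-bdry} on a slightly larger cylinder or a covering by small boundary and interior cylinders. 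I would take the $L^2$ bound on $Q^+_{7/2,\beta}$, which the proof of Lemma \ref{L2-comparision-bdry} yields via De Giorgi--Nash--Moser exactly as at the end of Lemma \ref{L2-comparision}.

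Next, set $w=u-v$ and convert the weighted $L^2$ smallness into unweighted $L^2$ smallness with Lemma \ref{imbed-epsilon-0309}-(ii). This works because $w$ vanishes on the flat boundary, so no averages $[w]_{0,\rho}$ are needed. Applying it slicewise in $t$ and then Hölder in $t$ gives
\[
\Big(\fint_{Q^+_{3,\beta}} w^2\,dxdt\Big)^{1/2} \le N\Big(\fint_{Q^+_{3,\beta}} w^2\beta\Big)^{1/2}\Big(\fint_{Q^+_{3,\beta}}|\nabla w|^2\beta\Big)^{1/2}\le N\bar\epsilon .
\]
Here $\nabla w$ is bounded in weighted $L^2$ by the hypothesis on $\nabla u$ (interpreted with the weight, as in Lemma \ref{L2-comparision-bdry}) together with $\|\nabla v\|_{L^\infty}\le N$ and $\fint_{B_4}\beta\le M_0$.

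Finally, apply Lemma \ref{bdry-Caccioppoli} with $\varphi=\eta(x)\chi(t)$, where $\varphi=1$ on $Q^+_{2,\beta}$ and $\varphi$ is supported in $Q_{3,\beta}$ away from the bottom. The term $\int w^2|\varphi\varphi_t|$ is bounded by the unweighted estimate just obtained. The term $\int w^2|\nabla\varphi|^2\beta$ is bounded by the weighted $\bar\epsilon$ estimate. The term $\int |F|^2\varphi^2\beta$ is bounded by $\kappa^2$. The last term is at most $\|\varphi\nabla v\|_\infty^2\int\Theta^2\beta\le N\kappa^2$. Normalizing by $\beta(Q^+_{2,\beta})$ using doubling, this gives $(\fint_{Q^+_{2,\beta}}|\nabla w|^2\beta)^{1/2}\le N_0(\bar\epsilon+\kappa)\le\epsilon$.

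I expect the main obstacle to be bookkeeping rather than a new idea. The hypotheses in the statement are written with $dxdt$ while the Caccioppoli lemma uses $\beta\,dxdt$. These must be reconciled via $\fint_{B_4}\beta\sim 1$ after normalization, and if a weighted version is genuinely needed, via Lemma \ref{L-q-2-wei}. One must also check that Lemma \ref{imbed-epsilon-0309}-(ii) applies on the cylinder despite only local regularity of $v$. This is fine because $\nabla v\in L^\infty$ on $Q^+_{3,\beta}$.
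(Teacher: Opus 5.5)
Your proposal is correct and is exactly the paper's route: the paper's proof only says to rerun the proof of Proposition~\ref{L2-gradient-comparision} with Lemmas~\ref{Bdr-Lipschitz-constant}, \ref{bdry-Caccioppoli} and \ref{L2-comparision-bdry} in place of their interior counterparts, which is what you do (rightly dropping the averages $[w]_{0,\rho}$ since $w$ vanishes on $T_4$). There are three minor corrections. First, the statement must be read with $\beta(dxdt)$ in the hypotheses and the conclusion (and with $\beta^{-1}$ inside $\Theta_{\bA,4}$), as Lemma~\ref{L2-comparision-bdry} and your Caccioppoli step in fact use, since neither $\fint_{B_4}\beta\,dx\sim 1$ nor Lemma~\ref{L-q-2-wei} converts unweighted smallness into weighted smallness. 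Second, the De Giorgi--Nash--Moser argument at the end of the comparison lemma gives the $L^2$ bound on $\nabla v$ only on $Q^+_{r,\beta}$ with $r<7/2$, which still suffices for the Lipschitz bound on $Q^+_{3,\beta}$. Third, Lemma~\ref{imbed-epsilon-0309}-(ii) should be used in the interpolation form $(\fint_{B_3^+} w^2\,dx)^{1/2}\le N(\fint_{B_3^+} w^2\beta(dx))^{(1-\theta)/2}(\fint_{B_3^+}|\nabla w|^2\beta(dx))^{\theta/2}$ that its proof actually gives, because the printed product form is not homogeneous in $w$; after H\"older in $t$ this yields $N\bar\epsilon^{1-\theta}$ instead of $N\bar\epsilon$, so you should choose $\bar\epsilon$ with $N_0\bar\epsilon^{1-\theta}\le\epsilon/2$.
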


\smallskip
\begin{proof}  With Lemma \ref{Bdr-Lipschitz-constant}, Lemma \ref{bdry-Caccioppoli}, and Lemma \ref{L2-comparision-bdry}, we can prove the proposition exactly the as that of Proposition \ref{L2-gradient-comparision}. We skip the details.
\end{proof}
\begin{proof}[Proof of Theorem \ref{inter-theorem-bdry}] From Proposition \ref{L2-gradient-comparision} and Proposition \ref{L2-gradient-comparision-bdry}, we can apply to level-set estimate to prove that
\begin{equation} \label{grad-bdry-est-0707}
\|\nabla u\|_{L^p(Q_{1,\beta}, \beta)} \leq  N\Big(\|\nabla u\|_{L^2(Q_{2\Lambda,\beta}^+, \beta)}+\|F\|_{L^p(Q_{2\Lambda,\beta}^+)}\Big).
\end{equation}
As this is standard, we skip the details. Nevertheless, similar proof can be found as in \cite[Section 6]{FP-2}, for example.

\smallskip
Now, from \eqref{grad-bdry-est-0707}, the PDE of $u$, the weighted Poincar\'e inequality \cite[Theorem 1.6]{Fabes}, we infer that
\begin{align*}
&\|u\|_{L^p(Q_{1,\beta}^+, \beta)} + \| \nabla u\|_{L^p(Q_{1,\beta}^+, \beta)}+\|u_t\|_{L^p(\Gamma_{\beta}(1),W^{-1,p}(B_1^+, \beta))}\\
&\leq N\Big(\|\nabla u\|_{L^2(Q_{2\Lambda,\beta}^+, \beta)}+\|F\|_{L^p(Q_{2\Lambda,\beta}^+)}\Big).
\end{align*}
The proof of the theorem is completed.
\end{proof}
\section{Proof of Theorem \ref{main-theorem}} \label{global-section}
To begin, let us introduce some notation and definitions. For $p \in (1, \infty)$, $\W^{1,p}_0(\Omega_T, \beta)$ is the weighted Sobolev space consisting of functions $u \in L^p((0, T), W^{1,p}_0(\Omega, \beta))$ such that
\[
u_t \in L^p((0, T), W^{-1, p}(\Omega, \beta)).
\]
The space $\W^{1,p}_{0} (\Omega_T, \beta)$ is endowed with the norm
\begin{equation}\label{norm-W1p}
\|u\|_{\W^{1,p}_{0} (\Omega_T, \beta)} = \|u\|_{L^p((0, T), W^{1,p}_0(\Omega, \beta))} + \|u_t\|_{L^p((0, T), W^{-1, p}(\Omega, \beta))}, 
\end{equation}
for $u \in \W^{1,p}_{0} (\Omega_T, \beta)$. Let $\bA$ satisfy \eqref{ellip-cond} in which its corresponding $\beta$ satisfies \eqref{beta-cond}, we consider the boundary value problem
\begin{equation} \label{main-eqn-2}
\left\{
\begin{aligned}
u_t - \textup{div}(\bA(x,t) \nabla u) &= \textup{div}(\beta(x) F(x,t)) \quad  &&\text{in} \quad \Omega_T,\\
u &= 0  \quad &&\text{on} \quad \partial\Omega\times (0, T].
\end{aligned} \right.
\end{equation}
To prove Theorem \ref{main-theorem}, we need a result on regularity estimates for weak solutions of \eqref{main-eqn-2} in which there is no required initial condition at $t=0$ in \eqref{main-eqn-2}.  For a given function $F \in L^1_{\text{loc}}(\Omega_T, \beta)^n$, a function $u \in \W^{1,p}_0(\Omega_T, \beta)$ with $p \in (1, \infty)$ is said to be a weak solution to \eqref{main-eqn-2} if
\begin{align*}
& -\int_{\Omega_T} u(x,t) \partial_t \varphi(x,t) dx dt + \int_{\Omega_T} \wei{\bA(x,t) \nabla u(x,t), \nabla \varphi (x,t)} dx dt \\
& = -\int_{\Omega_T} \wei{F(x,t), \nabla \varphi(x,t)}\beta(x) dx dt, \quad \forall \varphi \in C^\infty_0(\Omega_T).
\end{align*}

\smallskip
From Theorem \ref{inter-theorem} and Theorem \ref{inter-theorem-bdry}, we apply the partition of unity and the flattening boundary technique to derive the following result that gives the regularity estimates for weak solutions of \eqref{main-eqn-2}. As the proof of this result is standard, we skip the details. One can find the details in \cite[Section 7]{FP-2}, for example.
\begin{proposition} \label{main-thm-2} For every $\nu \in (0,1)$, $M_0 \geq 1$, and $p \in [2, \infty)$, there exists a sufficiently small constant $\delta = \delta (n, \nu, p, M_0) \in (0,1)$ such that the following assertions hold. Suppose that  \eqref{ellip-cond}, \eqref{beta-cond} hold, $\partial \Omega \in C^1$, and $[[\bA]]_{\textup{BMO}(\Omega_T, R_0)}< \delta$ with some $R_0 \in (0,1)$. Suppose also that $u \in \W^{1,2}_{0} (\Omega_T, \beta)$ is a weak solution to \eqref{main-eqn-2} with $F \in L^p(\Omega_T, \beta)^n$. Then, for any $\tau_0 \in (0, T)$, it holds that $u \in \W^{1,p}_0(\Omega\times (\tau_0, T),\beta)$. Moreover, there is a constant $N = N(n, \nu, p, M_0, R_0, \Omega, \tau_0, T) >0$ such that
\begin{equation} \label{global-time-cut-est}
\|u\|_{\W^{1,p}_0(\Omega\times (\tau_0, T),\beta)}\leq N\Big[ \|\nabla u\|_{L^2(\Omega_T, \beta)} + \|F\|_{L^p(\Omega_T, \beta)} \Big].
\end{equation}
\end{proposition}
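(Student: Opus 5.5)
The plan is to localize with a partition of unity subordinate to a covering of $\overline{\Omega}\times[\tau_0/2,T]$ by small cylinders $Q_{r,\beta}(z_k)$ with $r\le r_0\ll R_0$. On each piece we apply either Theorem \ref{inter-theorem} (interior pieces) or Theorem \ref{inter-theorem-bdry} (pieces meeting $\partial\Omega$, after flattening). The local $L^p$ bounds are then summed using the quasi-metric structure of Remark \ref{quasi-metric lemma}, and the leftover lower-order terms are absorbed or bounded by $\|\nabla u\|_{L^2(\Omega_T,\beta)}$.

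\emph{Interior pieces.} Take $z_0=(x_0,t_0)$ with $B_{2\Lambda r}(x_0)\subset\Omega$ and $t_0-(2\Lambda r)^2\E_\beta(x_0,2\Lambda r)\ge\tau_0/2$. We rescale by \eqref{scale-1} with this $r$. Lemma \ref{scaling-lemma} keeps the equation form. The rescaled weight $\tilde\beta$ has the same $A_{1+1/n_0}$ constant, and the partial weighted BMO of $\tilde{\bA}$ on $Q_{1}$ equals that of $\bA$ at scales below $r\le R_0$, so it is $\le\delta$. Choosing $\delta\le\delta_0(n,\nu,p,M_0)$, Theorem \ref{inter-theorem} gives, after scaling back, $\|\nabla u\|_{L^p(Q_{r,\beta}(z_0),\beta)}\lesssim_r \|\nabla u\|_{L^2(Q_{2\Lambda r,\beta}(z_0),\beta)}+\|F\|_{L^p}$. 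The constant depends on $r$ and on $\beta(B_r)$; both are controlled by $n,M_0,\Omega$ through doubling (Lemma \ref{property}).

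\emph{Boundary pieces.} Since $\partial\Omega\in C^1$, for $x_0\in\partial\Omega$ there is a $C^1$ map $\Phi$ with $D\Phi(x_0)=I$ that flattens $\partial\Omega\cap B_{r_1}(x_0)$, where $\|D\Phi-I\|_{L^\infty}$ can be made as small as we like by shrinking $r_1$. The transformed coefficient $D\Phi\,\bA\,D\Phi^T\circ\Phi^{-1}$ still satisfies \eqref{ellip-cond} with weight $\beta\circ\Phi^{-1}$. This weight is in $A_{1+1/n_0}$ with constant $N(n,M_0)$, because bi-Lipschitz maps close to the identity preserve Muckenhoupt classes, balls up to comparable dilations, and hence the cylinders $Q_{\rho,\beta}$ up to comparable radii. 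The new partial oscillation is bounded by $N\delta+N\|D\Phi-I\|_\infty$, where the second term comes from the $x$-dependence of $D\Phi$ multiplied by $|\bA|\le\nu^{-1}\beta$. Taking $r_1$ and $\delta$ small makes this $\le\bar\delta_0$. We then rescale and apply Theorem \ref{inter-theorem-bdry}. The forcing $\beta F$ transforms to $\tilde\beta\tilde F$ with $|\tilde F|\lesssim|F\circ\Phi^{-1}|$.

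\emph{Summation and time derivative.} The covering of $\overline\Omega\times[\tau_0,T]$ has bounded overlap, with the number of cylinders depending on $\Omega,T,\tau_0,r_1$ and the doubling constants. Each enlarged cylinder lies in $\Omega_T$, or in the flattened half-cylinder, within $t>0$; this is where $\tau_0>0$ is used to stay away from $t=0$. Summing gives $\|\nabla u\|_{L^p(\Omega\times(\tau_0,T),\beta)}\le N[\|\nabla u\|_{L^2(\Omega_T,\beta)}+\|F\|_{L^p(\Omega_T,\beta)}]$. The zero-order term $\|u\|_{L^p(\beta)}$ follows from the weighted Poincaré inequality for $W^{1,p}_0(\Omega,\beta)$; this comes from Lemma \ref{Sobolev-Poincare}(ii) and its $L^p$ analogue from \cite{Fabes}, applied slicewise in $t$. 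The bound on $u_t$ in $L^p(W^{-1,p}(\Omega,\beta))$ is read off the equation: the map $\varphi\mapsto-\int\langle\bA\nabla u+\beta F,\nabla\varphi\rangle$ is controlled by Hölder with $|\bA|\le\nu^{-1}\beta$, giving $\|u_t\|\le N(\|\nabla u\|_{L^p(\beta)}+\|F\|_{L^p(\beta)})$.

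The \textbf{main obstacle} is the boundary flattening. We must verify that the weighted partial-BMO smallness and the geometry of the nonhomogeneous cylinders $Q_{\rho,\beta}$ survive the change of variables with uniform constants; in particular, $\E_{\beta\circ\Phi^{-1}}(\rho)\approx\E_\beta(\rho)$ must hold so that time intervals remain comparable. I would first try to handle this via the doubling and comparability from Lemma \ref{property}, together with an $A_q$-invariance lemma for bi-Lipschitz maps, shrinking $r_1$ as needed.
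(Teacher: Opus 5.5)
Your route is the one the paper intends. The paper gives no details and refers to \cite[Section 7]{FP-2} for exactly this argument: interior pieces via Theorem \ref{inter-theorem}, boundary pieces via flattening and Theorem \ref{inter-theorem-bdry}, summation with bounded overlap, and $u$, $u_t$ recovered from Poincar\'e and the equation. One step of your proposal is false, however, and it exposes a defect in the statement itself.

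You claim the local constants depend on $r$ and $\beta(B_r)$, and that both are ``controlled by $n,M_0,\Omega$ through doubling''. Doubling controls only ratios. Undoing \eqref{scale-1} and using $\beta(Q_{r,\beta}(z_0))\in[\,|B_1|r^{n+2},\,M_0|B_1|r^{n+2}]$, Theorem \ref{inter-theorem} gives
\[
\|\nabla u\|_{L^p(Q_{r,\beta}(z_0),\beta)}\le N\Big(r^{-(n+2)(\frac12-\frac1p)}\|\nabla u\|_{L^2(Q_{2\Lambda r,\beta}(z_0),\beta)}+\|F\|_{L^p(Q_{2\Lambda r,\beta}(z_0),\beta)}\Big).
\]
So $\beta(B_r)$ plays no role, but $r$ does. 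Your constraint $(2\Lambda r)^2\E_\beta(x_0,2\Lambda r)\le\tau_0/2$ ties $r$ to the absolute size of $\E_\beta$. Under $(\beta,\bA)\mapsto(\lambda\beta,\lambda\bA)$ this size is multiplied by $\lambda^{-1}$, while $[\beta]_{A_{1+1/n_0}}$ and $[[\bA]]_{\textup{BMO}}$ are unchanged. Hence your argument gives $N$ depending also on, e.g., $\sup_{x_0\in\overline\Omega}\E_\beta(x_0,r_1)$.

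This dependence cannot be removed. Take $\beta\equiv\lambda$, $\bA=\lambda I$, $F=0$, and $u=e^{-\lambda\mu_1 t}\phi_1(x)$, where $(\mu_1,\phi_1)$ is the first Dirichlet eigenpair of $-\Delta$ on $\Omega$.
\begin{itemize}
\item The left side of \eqref{global-time-cut-est} is at least $\lambda^{1/p}e^{-\lambda\mu_1T}(T-\tau_0)^{1/p}\|\nabla\phi_1\|_{L^p(\Omega)}$.
\item The right side is at most $N\lambda^{1/2}T^{1/2}\|\nabla\phi_1\|_{L^2(\Omega)}$.
\end{itemize}
For $p>2$ this is impossible with fixed $N$ as $\lambda\to0$. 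So the proposition holds only with this extra dependence on the size of $\beta$. That corrected version is what your argument proves, and you should not claim the dependence list as stated.

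A smaller point concerns the flattening. Use the volume-preserving map $\Phi(x)=(x',x_n-\gamma(x'))$, after a rotation and with $\gamma$ extended to $\R^{n-1}$ with small Lipschitz constant. A general $C^1$ map with $D\Phi(x_0)=I$ produces the factor $|\det D\Phi^{-1}|$ in front of $u_t$, which Theorem \ref{inter-theorem-bdry} does not allow. The global extension is needed so that $\beta\circ\Phi^{-1}$ is an $A_{1+1/n_0}$ weight on all of $\R^n$, as \eqref{beta-cond} requires.
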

\smallskip
Now, we have all ingredients ready for the proof of Theorem \ref{main-theorem}.
\begin{proof}[Proof of Theorem \ref{main-theorem}]  Consider the case that $p =2$. Let $u \in \W^{1,2}_*(\Omega_T, \beta)$  be a weak solution to \eqref{main-eqn}, by using Steklov's average, we can formally take $u$ as the test function to equation \eqref{main-eqn} to obtain
\[
\frac{1}{2}\frac{d}{dt}\int_{\Omega} u(x,t)^2 dx + \int_{\Omega}\wei{\bA(x,t) \nabla u(x,t), \nabla u(x,t)} dx = -\int_{\Omega} \wei{F(x,t), \nabla u(x,t)} \beta(x) dx.
\]
From this, \eqref{ellip-cond}, and the standard energy estimate using Young's inequality, we obtain
\[
\sup_{t\in (0,T)}\int_{\Omega} u(x,t)^2 dx + \int_{\Omega_T} |\nabla u(x,t)|^2 \beta(x) dxdt \leq N \int_{\Omega_T} |F(x,t)|^2\beta(x) dxdt
\]
for $N = N(n, \nu)>0$. From this, and due to the homogeneous boundary condition, we can apply the Poincar\'e inequality (\cite[Theorem 1.6]{Fabes}) to get
\[
\|u\|_{L^2((0, T), W^{1,2}(\Omega, \beta))} \leq N \|F\|_{L^2(\Omega_T, \beta)}.
\]
Then, using the PDE in \eqref{main-eqn}, we infer that
\begin{equation} \label{main-p=2}
\|u\|_{\W^{1,2}_{*}(\Omega_T, \beta)} \leq N \|F\|_{L^2(\Omega_T, \beta)}.
\end{equation}
Hence, \eqref{main-thm-est} is proved. From the estimate, we also obtain  the existence of a solution $u \in \W^{1,2}_{*}(\Omega_T, \beta)$ via the Galerkin method.  The uniqueness of the solution also follows directly from \eqref{main-thm-est}.  Theorem \ref{main-theorem} is proved when $p=2$.

\smallskip
We consider the case $p >2$. As $F \in L^p(\Omega_T, \beta)^n$, we see that $F \in L^2(\Omega_T, \beta)^n$. Then, using the case $p=2$ that we just proved, we find a unique weak solution $u \in \W^{1,2}_{*}(\Omega_T, \beta)$ to \eqref{main-eqn}, and \eqref{main-p=2} holds. Then,  we trivially extend $u$, $F$ on $\Omega\times (-1,0)$, and extend $\bA(x,t)$ to be an identity matrix function on $\Omega\times (-1,0)$. It is not hard to check that $u \in \W^{1,2}_{*}(\Omega \times (-1, T), \beta)$ is a weak solution to the equation
\begin{equation} \label{main-eqn-extend}
\left\{
\begin{array}{cccl}
u_t - \textup{div}(\bA(x,t) \nabla u) & =& \textup{div}(\beta(x) F(x,t)) & \quad  \text{in} \quad \Omega \times (-1, T), \\[4pt]
u & = & 0 & \quad \text{on} \quad \partial' ( \Omega \times (-1, T)).
\end{array} \right.
\end{equation}
Now, let $\delta = \delta(n, \nu, p, M_0) \in (0,1)$ be the number defined in Theorem \ref{main-thm-2}. By Theorem \ref{main-thm-2}, it follows that $u \in \W^{1,p}_{*}(\Omega_T, \beta)$  and
\[
\|u\|_{\W^{1,p}_{*}(\Omega_T, \beta)} \leq N \Big[ \|\nabla u\|_{L^2(\Omega_T, \beta)} + \|F\|_{L^p(\Omega_T, \beta)} \Big].
\]
Then, \eqref{main-thm-est} follows from this last estimate,  \eqref{main-p=2} and the H\"{o}lder inequality. Theorem \ref{main-theorem} is proved when $p > 2$.

\smallskip
It remains to consider the case $p \in (1, 2)$. In this case, the existence, uniqueness, and the estimate \eqref{main-thm-est} can be obtained by the duality argument using the case $p \in (2, \infty)$. As the argument is standard and it can be found in \cite{DK, DP}, we skip the details. The proof of the theorem is completed.
\end{proof}
\section*{Acknowledgement}
The research of T. Phan was partially supported by Simons Foundation, grant \# 769369. 


\begin{thebibliography}{9}
\bibitem{AS} V. Alexiades and A. D. Solomon, {\it Mathematical Modeling of Melting and Freezing Processes}, CRC Press, Boca Raton, Routledge; 2018.


\bibitem{AFV2} A. Audrito, G. Fioravanti, S. Vita, {\it Higher order Schauder estimates for degenerate or singular parabolic equations},  Rev. Mat. Iberoam.  41 (2025), no. 4, 1513-1554.

\bibitem{AFV} A. Audrito, G. Fioravanti, S. Vita, {\it Schauder estimates for parabolic equations with degenerate or singular weights},  Calc. Var. 63, 204 (2024). https://doi.org/10.1007/s00526-024-02809-2.


\bibitem{BDGP} A. Kh. Balci,  L. Diening, R. Giova,  and A. Passarelli di Napoli, {\it Elliptic equations with degenerate weights}. SIAM J. Math. Anal. 54 (2022), no. 2, 2373-2412

\bibitem{BS} P. Bella and M. Sch\"{a}ffner, {\it Local boundedness and Harnack inequality for solutions of linear nonuniformly elliptic equations},  Comm. Pure Appl. Math. 74 (2021), no. 3, 453-477. 

\bibitem{Brezis} H. Brezis, {\it On a conjecture of J. Serrin}, Atti Accad. Naz. Lincei Rend. Lincei Mat. Appl. 19 (2008), no. 4, 335-338. http://dx.doi.org/10.4171/RLM/529. 


\bibitem{BW1} S.-S. Byun and L. Wang,  {\it Parabolic equations in Reifenberg domains}. Arch. Ration. Mech. Anal. 176 (2005), no. 2, 271-301.


\bibitem{CMP} D. Cao, T. Mengesha, and T. Phan,  {\it Weighted $W^{1,p}$-estimates for weak solutions of degenerate and singular elliptic equations}, Indiana University Mathematics Journal, Vol. 67, No. 6 (2018), 2225-2277.

\bibitem{CMP-1} D. Cao, T. Mengesha, and T. Phan,  {\it Gradient estimates for weak solutions of linear elliptic systems with singular-degenerate coefficients}, AMS Contemporary Mathematics, Nonlinear Dispersive Waves and Fluids, Volume 725, 2019, 13-33,


\bibitem{CP} L.~A.~Caffarelli and I.~Peral. {\it On $W^{1,p}$ estimates for elliptic equations in divergence form}, Comm. Pure Appl. Math. 51 (1998), no. 1, 1-21.

\bibitem{Chia-1} F. Chiarenza and R. Serapioni, {\it Degenerate parabolic equations and Harnack inequality}, Ann. Mat. Pura Appl. 4 (1984), 137, 139-162.

\bibitem{Chia-3} F.  Chiarenza and R. Serapioni, {\it 
A Harnack inequality for degenerate parabolic equations}, Comm. Partial Differential Equations 9 (1984), no. 8, 719-749.

\bibitem{Chia-2} F. Chiarenza and R. Serapioni, {\it A remark on a Harnack inequality for degenerate parabolic equations}, Ren. Sem. Mat. Uni. Padova 73 (1985), 179-190.

\bibitem{CJP} S. Cho, J. Fang, and T. Phan, {\it Harnack inequality for singular or degenerate parabolic equations in non-divergence form},  Journal of Differential Equations, Volume 457, 15 March 2026, 113997, \url{https://doi.org/10.1016/j.jde.2025.113997}.

\bibitem{DIT} M. M. Disconzi, M. Ifrim, and D. Tataru, {\it The relativistic Euler equations with a physical vacuum boundary: Hadamard local well-posedness, rough solutions and continuation criterion}, Arch. Rational Mech. Anal. 245, 127-182 (2022).

\bibitem{DPS} H. Dong, T. Phan, and Y. Sire, {\it Sobolev estimates for singular-degenerate quasilinear equations beyond the $A_2$ class},  J. Geom. Anal. 34 (2024), 286. https://doi.org/10.1007/s12220-024-01729-z.

\bibitem{DP} H. Dong and T. Phan, {\it On parabolic and elliptic equations with singular or degenerate coefficients}, Indiana University Mathematics Journal, 73 (2023), no. 4, 1461-1502.

\bibitem{DP-0} H. Dong and T. Phan, {\it Weighted mixed-norm $L_p$ estimates for equations in non-divergence form with singular coefficients: The Dirichlet problem}, Journal of Functional Analysis, Volume 285, Issue 2, 109964 (2023).

\bibitem{DP1} H. Dong and T. Phan, {\it Parabolic and elliptic equations with singular or degenerate coefficients: the Dirichlet problem}, Trans. Amer. Math. Soc. 374 (2021), no. 9, 6611-6647, DOI 10.1090/tran/8397.


\bibitem{DP3} H. Dong and T. Phan, {\it Weighted mixed-norm estimates for elliptic and parabolic equations in non-divergence form with singular degenerate coefficients}, Revista Matematica Iberoamericana, Vol. 37, No. 4pp. 1413-1440, DOI: 10.4171/rmi/1233.

\bibitem{DPT1} H. Dong, T. Phan, and H.~V.~ Tran, {\it Nondivergence form degenerate linear parabolic equations on the upper half space}, Journal of Functional Analysis, Volume 286, Issue 9, 1 May 2024, 110374.

\bibitem{DPT2} H. Dong, T. Phan, and H.~V.~ Tran, {\it Degenerate linear parabolic equations in divergence form on the upper half space}, Trans. Amer. Math. Soc. 376 (2023), 4421-4451. DOI: https://doi.org/10.1090/tran/8892.

\bibitem{DK} H. Dong and D. Kim, {\it On $L_p$-estimates for elliptic and parabolic equations with $A_p$ weights}. Trans. Amer. Math. Soc. 370 (2018), no. 7, 5081-5130.

\bibitem{EPL} H. J. Eberl, D. F. Parker, and M. C. M. van Loosdrecht, {\it A new deterministic spatio-temporal continuum model for biofilm development}, Journal of Theoretical Medicine, 3(3), 161-175 (2001).

\bibitem{Fabes-1} E. B. Fabes, {\it Properties of nonnegative solutions of degenerate elliptic equations}, Proceedings of the International Conference on Partial Differential Equations dedicated to Luigi Amerio on his 70th birthday, Milan/Como, 1982, Milan J. Math. 52:11-21, 1982.

\bibitem{Fabes} E. B. Fabes, C. E. Kenig, and R. P. Serapioni, {\it The local regularity of solutions of degenerate elliptic equations}, Comm. Partial Differential Equations 7 (1982), no. 1, 77-116.

\bibitem{FP-2} J. Fang, and T. Phan, {\it On $W^{2,\epsilon}$-estimates for a class of singular-degenerate parabolic equations},  arXiv:2601.04324.

\bibitem{FP-1} J. Fang, and T. Phan, {\it Well-posedness for a class of parabolic equations with singular-degenerate coefficients}, arXiv:2510.20051.

\bibitem{FKRH}  M. M. Farid, A. M. Khudhair, S. Ali K Razack, and S. Al-Hallaj, {\it A review on phase change energy storage: materials and applications}, Energy Conversion and Management, 45(9), 1597-1615, https://doi.org/10.1016/j.enconman.2003.09.015.

\bibitem{Pop-2} P. M. N. Feehan and C. A. Pop, {\it Schauder apriori estimates and regularity of solutions to boundary-degenerate elliptic linear second-order partial differential equations}, J. Differ. Equ. 256 (2014), no. 3, 895-956.

\bibitem{GS} M. Giaquinta, M. Stuwe, {\it On the partial regularity of weak solutions of nonlinear parabolic systems}, Math. Z. 179 (1982) 437-451.

\bibitem{Grafakos-2} L. Grafakos, {\it Classical Fourier Analysis}, Graduate Texts in Mathematics 249, Springer, Third Edition, 2014.


\bibitem{Lin} Q. Han and F. H. Lin, {\it Elliptic Partial Differential Equations}, Courant Institute of Mathematical Sciences, New York University, New York, 1997.

\bibitem{HNP}  L. T. Hoang, T. V. Nguyen, and T. V. Phan, {\it Gradient estimates and global existence of smooth solutions to a cross-diffusion system}. SIAM J. Math. Anal. 47 (2015), no. 3, 2122-2177.

\bibitem{JS}  D. Jesus and Y. Sire, {\it Gradient regularity for fully nonlinear equations with degenerate coefficients}. Ann. Mat. Pura Appl. (4) 204 (2025), no. 2, 625-642.

\bibitem{JX} J. Ji and J. Xiong, {\it On some divergence-form singular elliptic equations with codimension-two boundary: $L_p$-estimates}, arXiv:2510.06716.

\bibitem{JX1} T. Jin and J. Xiong, {\it Optimal boundary regularity for fast diffusion equations in bounded domains}, Amer. J. Math., vol. 145, no. 1, 151-219, 2023. 

\bibitem{KK1} D. Kim and N. V. Krylov, {\it Parabolic Equations with Measurable Coefficients}. Potential Anal 26, 345-361 (2007). https://doi.org/10.1007/s11118-007-9042-8.

\bibitem{KK2} D. Kim and N. V. Krylov, {\it Second-order elliptic equations with variably partially VMO coefficients}, Journal of Functional Analysis Vol. 257, Issue 6, 1695-1712 (2009).

\bibitem{Le} N. Q. Le and O. Savin, {\it Schauder estimates for degenerate Monge-Amp\'ere equations and smoothness of the eigenfunctions}, Invent. Math. 207 (2017), no. 1, 389-423.

\bibitem{Li-Xi} J. Li and Z. Xin, {\it Entropy-bounded solutions to the one-dimensional heat conductive compressible Navier-Stokes equations with far field vacuum},  Comm. Pure Appl. Math. 75 (2022), no. 11, 2393-2445. 

\bibitem{MNS-1}  G. Metafune, L. Negro, and C. Spina, {\it  Singular parabolic problems in the half-space}. Studia Math. 277 (2024), no. 1, 144.

\bibitem{MP}  T. Mengesha and T. Phan, {\it Weighted $W^{1,p}$-estimates for weak solutions of degenerate elliptic equations with coefficients degenerate in one variable}. Nonlinear Anal. 179 (2019), 184-236.


\bibitem{M-W} B. Muckenhoupt and R. L. Wheeden, {\it Weighted bounded mean oscillation and Hilebert transform}, Studia Math. 54 (1975/76), no. 3, 221-237.


\bibitem{PSW} J. Pr\"{u}ss,  G. Simonett, and M. Wilke, {\it On thermodynamically consistent Stefan problems with variable surface energy}.  Arch. Ration. Mech. Anal. 220 (2016), no. 2, 603-638.

\bibitem{STV1} Y. Sire, S. Terracini, and S. Vita, {\it Liouville type theorems and regularity of solutions to degenerate or singular problems part I: even solutions}, Comm. Partial Differential Equations 46 (2021), no. 2, 310-361. MR 4207950, DOI 10.1080/03605302.2020.1840586.

\bibitem{STV2}  Y. Sire, S. Terracini, and S. Vita, {\it Liouville type theorems and regularity of solutions to degenerate or singular problems part II: odd solutions}, Math. Eng. 3 (2021), no. 1, Paper No. 5, 50, DOI 10.3934/mine.2021005.

\bibitem{STT} Y. Sire, S. Terracini, G. Tortone, {\it On the nodal set of solutions to degenerate or singular elliptic equations with an application to s-harmonic functions},  Journal de Math\'{e}matiques Pures et Appliqu\'{e}es, Vol 143 (2020),  376-441.


\bibitem{Simon}  J. Simon, {\it Compact Sets in the Space} $L^p(0,T; B)$,  Annali di Matematica pura ed applicata 146, 65-96 (1986). https://doi.org/10.1007/BF01762360.

\bibitem{Tru-1} N. S. Trudinger,  {\it On the regularity of generalized solutions of linear, non-uniformly elliptic equations}. Arch. Rational Mech. Anal. 42 (1971), 50-62. doi:10.1007/BF00282317.

\bibitem{Tru-2} N. S. Trudinger, {\it Harnack inequalities for nonuniformly elliptic divergence structure equations}, Invent. Math.  64 (1981), no. 3, 517-531. 

\bibitem{Vazquez} J. L. V\'{a}zquez, {\it The Porous Medium Equation: Mathematical Theory}, Oxford University Press (2007).

\bibitem{Visintin} A. Visintin, {\it Models of Phase Transitions}, Progress in Nonlinear Differential Equations and their Applications, 28. Birkh\"{a}user Boston, Inc., Boston, MA, 1996. 


\end{thebibliography}
\end{document}